\documentclass[12pt,reqno]{amsart}

\usepackage{amsmath,amssymb,ifthen}
\usepackage{fullpage}
\usepackage{graphicx,psfrag,subfigure}
\usepackage{color}
\usepackage{moreverb}
\usepackage{amsthm}
\usepackage{mathrsfs}
\usepackage{esint}  
\usepackage{graphicx}
\usepackage{psfrag}
\usepackage{hhline}
\usepackage{enumerate}
\usepackage{bbm}
\usepackage[export]{adjustbox}

\def\coarse{\bullet}
\def\fine{\circ}
\def\ro#1{q_{\rm #1}}
\def\const#1{C_{\rm #1}}
\def\q#1{q_{\rm #1}}
\def\H{\widetilde{H}}
\def\K{\mathbb{K}}

\def\MM{\mathcal M}

\def\R{{\mathbb R}}
\def\Z{\mathbb {Z}}
\def\N{{\mathbb N}}

\def\KK{{\mathcal K}}
\def\NN{{\mathcal N}}
\def\NNr{{\mathcal N}^{\rm ref}}
\def\NNnr{{\mathcal N}^{\rm id}}

\def\PP{{\mathcal P}}

\def\XX{{\mathcal X}}
\def\VV{{\mathfrak V}}
\def\WW{\mathfrak{W}}
\def\WWe{\mathcal{W}}

\def\diam{{\rm diam}}

\def\edual#1#2{\langle#1\,,\,#2\rangle_{\WW}}
\def\edualV#1#2{\langle#1\,,\,#2\rangle_{\VV}}
\def\norm#1#2{\|#1\|_{#2}}

\def\enorm#1{|\hspace*{-.5mm}|#1|\hspace*{-.5mm}|_{\WW}}
\def\enormV#1{|\hspace*{-.5mm}|#1|\hspace*{-.5mm}|_{\VV}}
\def\set#1#2{\big\{#1\,:\,#2\big\}}

\def\dual#1#2{\langle#1\,,\,#2\rangle}

\def\QQ{{\mathcal{Q}}}



\def\A{\mathbb{A}}

\def\refine{{\tt ref}}
\def\RR{\mathcal{R}}

\def\supp{{\rm supp}}


\def\Cdrel{C_{\rm drel}}
\def\Cred{C_{\rm red}}
\def\Cstab{C_{\rm stab}}

\def\Cref{C_{\rm ref}}

\def\Cqo{C_{\rm qo}}

\def\rhored{\rho_{\rm red}}

\def\Copt{C_{\rm opt}}

\def\Crel{C_{\rm rel}}

\def\Cclos{C_{\rm clos}}

\def\Cscott{C_{\rm sz}}

\def\Cinv{C_{\rm inv}}
\def\Cmin{C_{\rm min}}
\def\Cmark{C_{\rm mark}}

\def\Cconv{C_{\rm conv}}

\newcounter{constantsnumber}
\def\namec#1#2{%
  \ifthenelse{\equal{#1}{lipschitz}}{C_{\rm lip}}{%
  \ifthenelse{\equal{#1}{c:unifEquivLevel}}{C_{\rm level}}{%
  \ifthenelse{\equal{#1}{mark}}{C_{\rm mark}}{%
  \ifthenelse{\equal{#1}{basis}}{C_{\rm basis}}{%
  \ifthenelse{\equal{#1}{monotone}}{C_{\rm mon}}{%
  \ifthenelse{\equal{#1}{cea}}{C_{\mbox{\rm\scriptsize C\'ea}}}{%
  \ifthenelse{\equal{#1}{norm}}{C_{\rm norm}}{%
  \ifthenelse{\equal{#1}{mon}}{{C}_{\rm mon}}{
  \ifthenelse{\equal{#1}{lip}}{{C}_{\rm lip}}{
  \ifthenelse{\equal{#1}{monA}}{c_{\rm mon}}{
  \ifthenelse{\equal{#1}{lipA}}{c_{\rm lip}}{
  \ifthenelse{\equal{#1}{normequiv1}}{c_{\rm norm}}{ 
  \ifthenelse{\equal{#1}{inv}}{C_{\rm inv}}{ 
  \ifthenelse{\equal{#1}{inv2}}{\widetilde{C}_{\rm inv}}{ 
  \ifthenelse{\equal{#2}{newcounter}}{\refstepcounter{constantsnumber}\label{const#1}}{}C_{\ref{const#1}}}%
  }}}}}}}}}}}}}}

\numberwithin{equation}{section}
\numberwithin{figure}{section}
\newtheorem{theorem}{Theorem}[section]
\newtheorem{proposition}[theorem]{Proposition}
\newtheorem{lemma}[theorem]{Lemma}

\newtheorem{algorithm}[theorem]{Algorithm}

\newtheorem{remark}[theorem]{Remark}

%


\def\refine{{\tt refine}}
\def\MM{\mathcal M}

\begin{document}

\title[Adaptive IGA boundary element methods]{Adaptive isogeometric boundary element methods\\ with local smoothness control}
\date{\today}

\author{Gregor Gantner}
\author{Dirk Praetorius}
\author{Stefan Schimanko}

\begin{abstract}
In the frame of isogeometric analysis, we consider a Galerkin  boundary element discretization of the hyper-singular integral equation associated with the 2D Laplacian. We propose and analyze an adaptive algorithm which locally refines the boundary partition and, moreover, steers the smoothness of the NURBS ansatz functions across elements. In particular and unlike prior work, the algorithm can increase and decrease the local smoothness properties and hence exploits the full potential of isogeometric analysis. We prove that the new adaptive strategy leads to linear convergence with optimal algebraic rates. Numerical experiments confirm the theoretical results.
A short appendix comments on analogous results for the weakly-singular integral equation.
\end{abstract}

\keywords{isogeometric analysis, hyper-singular integral equation, boundary element method, IGABEM, adaptive algorithm, convergence, optimal convergence rates}
\subjclass[2010]{65D07, 65N38, 65N50, 65Y20}

\maketitle

\section{Introduction}\label{section:introduction}

\noindent
In this work, we prove optimal convergence rates for an adaptive isogeometric boundary element method for the (first-kind) hyper-singular integral equation 
\begin{align}\label{eq:hyper strong}
 \mathfrak{W}u=g:=(1/2-\mathfrak{K'})\phi
 \quad\text{on }\Gamma:=\partial\Omega
\end{align}
associated with the 2D Laplacian. Here, $\Omega \subset \R^2$ is a bounded Lipschitz domain, whose boundary can be parametrized via non-uniform rational B-splines (NURBS); see Section~\ref{section:preliminaries} for the precise statement of the integral operators $\mathfrak{W}$ and $\mathfrak{K}'$ as well as for definition and properties of NURBS. Given boundary data $\phi$, we seek for the unknown integral density $u$. We note that~\eqref{eq:hyper strong} is equivalent to the Laplace--Neumann problem
\begin{align}
 -\Delta P = 0 \text{ in } \Omega
 \quad \text{subject to Neumann boundary conditions} \quad
 \partial P/\partial \nu = \phi \text{ on } \Gamma,
\end{align}
where $u = P|_\Gamma$ is the trace of the sought potential $P$.

The central idea of isogeometric analysis (IGA) is to use the same ansatz functions for the discretization of \eqref{eq:hyper strong}, as are used for the representation of the problem geometry in CAD.
 This concept, originally invented in~\cite{hughes2005} for finite element methods (IGAFEM) has proved very fruitful in applications; see also the monograph \cite{bible}. 
Since CAD directly provides a parametrization of the boundary $\partial \Omega$, this makes the boundary element method (BEM) the most attractive numerical scheme, if applicable (i.e., provided that the fundamental solution of the differential operator is explicitly known); see \cite{igabem2d,igabem3d} for the first works on isogeometric BEM (IGABEM) for 2D resp.\ 3D. 

We refer {to~\cite{SBTR,helmholtziga,simpson,adss16,tran}} for numerical experiments, to \cite{cad2wave,TM,zechner,dhp16,wolf18,wolf_new} for fast IGABEM based on wavelets, fast multipole, $\mathcal{H}$-matrices resp.\ $\mathcal{H}^2$-matrices, and 
to~\cite{stokesiga,keuchel,acdsss18,acdsss18,fgkss18} for some quadrature analysis.



On the one hand, IGA naturally leads to high-order ansatz functions. On the other hand, however, optimal convergence behavior with higher-order discretizations is only observed in simulations, if the (given) data $\phi$ as well as the (unknown) solution $u$ are smooth. Therefore, {\sl a~posteriori} error estimation and related adaptive strategies are mandatory to realize the full potential of IGA. Rate-optimal adaptive strategies for IGAFEM have been proposed and analyzed independently in~\cite{bg2017,ghp2017} for IGAFEM, while the earlier work~\cite{bg2016} proves only linear convergence. As far as IGABEM is concerned, available results focus on the weakly-singular integral equation with energy space $H^{-1/2}(\Gamma)$; see~\cite{igafaermann,resigabem} for {\sl a~posteriori} error estimation as well as~\cite{optigabem} for the analysis of a rate-optimal adaptive IGABEM in 2D, and~\cite{gantner17} for corresponding results for IGABEM in 3D with hierarchical splines. 
Recently, \cite{priga} investigated optimal preconditioning for IGABEM in 2D with locally refined meshes.

In this work, we consider the hyper-singular integral equation~\eqref{eq:hyper strong} with energy space $H^{1/2}(\Gamma)$. We stress that the latter is more challenging than the weakly-singular case, with respect to numerical analysis as well as stability of numerical simulations. Moreover, the present work addresses also the adaptive steering of the smoothness of the NURBS ansatz spaces across elements. The adaptive strategy thus goes beyond the classical 
\begin{align*}
 \boxed{\rm\, SOLVE\,}
 \,\,\longrightarrow\,\,
 \boxed{\rm\, ESTIMATE\,}
 \,\,\longrightarrow\,\,
 \boxed{\rm\, MARK\,}
 \,\,\longrightarrow\,\,
 \boxed{\rm\, REFINE\,}
\end{align*}
considered, e.g., in~\cite{fkmp,gantumur,part1,part2} for standard BEM with piecewise polynomials. Moreover, while the adaptive algorithm from~\cite{optigabem} only allows for a smoothness reduction (which makes the ansatz space larger), the new algorithm also stears the local increase of smoothness (which makes the ansatz space smaller). 
Additionally, we also account for the approximate computation of the right-hand side.
We prove that the new algorithm is rate optimal in the sense of~\cite{axioms}. Moreover, as a side result, we observe that the related approximation classes are independent of the smoothness of the ansatz functions. 

\def\Ceff{C_{\rm eff}}
To steer the algorithm, we adopt the weighted-residual error estimator from standard BEM~\cite{cs95,carstensen97,cmps04,part2} and prove that it is reliable and weakly efficient,~i.e.,
\begin{subequations}
\begin{align}
 \eta_\coarse := \big( \norm{h_\coarse^{1/2} ( (1/2-\mathfrak{K}')\phi_\coarse - \mathfrak{W}U_\coarse )}{L^2(\Gamma)}^2
 + \norm{h_\coarse^{1/2} ( \phi - \phi_\coarse )}{L^2(\Gamma)}^2 \big)^{1/2}
\end{align}
satisfies (with the arclength derivative $\partial_\Gamma$) that 
\begin{align}
 \Crel^{-1} \, \norm{u - U_\coarse}{\H^{1/2}(\Gamma)}
 \le \eta_\coarse
 \le \Ceff \big( \norm{h_\coarse^{1/2} \partial_\Gamma( u - U_\coarse )}{L^2(\Gamma)}^2
 + \norm{h_\coarse^{1/2} ( \phi - \phi_\coarse )}{L^2(\Gamma)}^2 \big)^{1/2}.
\end{align}
\end{subequations}
Here, $h_\coarse$ is the \emph{local mesh-size}, and $U_\coarse$ is the Galerkin solution with respect to some approximate discrete data $\phi_\coarse \approx \phi$. We compute $\phi_\coarse$ by the $L^2$-orthogonal projection of $\phi$ onto discontinuous piecewise polynomials. We stress that data approximation is an important subject in numerical computations, and reliable numerical algorithms have to properly account for it. In particular, the benefit of our approach is that the implementation has to deal with discrete integral operators only. Since $\phi$ is usually non-smooth with algebraic singularities, the stable numerical evaluation of $\mathfrak{K}'\phi$ would also require non-standard (and problem dependent) quadrature rules, which simultaneously resolve the logarithmic singularity of $\mathfrak{K}'$ as well as the algebraic singularity of $\phi$. 
This is avoided by our approach. 
Finally, in the appendix, we generalize the presented results also to slit problems and the weakly-singular integral equation. 

\subsection*{Outline}
The remainder of the work is organized as follows: 
Section~\ref{section:preliminaries} provides the functional analytic setting of the boundary integral operators, the definition of the mesh, B-splines and NURBS together with their basic properties.
In Section~\ref{sec:main result}, we introduce the new adaptive Algorithm~\ref{the algorithm} and provide our main results on {\sl a~posteriori} error analysis and optimal convergence in Theorem~\ref{thm:main}.
The proof of the latter is postponed to Section~\ref{sec:proof}, where we essentially verify the abstract \textit{axioms of adaptivity} of \cite{axioms} and sketch how they imply optimal convergence.
Auxiliary results of general interest include a new Scott--Zhang-type operator onto rational splines (Section~\ref{sec:scott zhang}) and inverse inequalities (Section~\ref{sec:inverse inequalities}), which are well-known for standard BEM.
In Section~\ref{section:numerics}, we underline our theoretical findings via numerical experiments. 
There, we consider both the hyper-singular integral equation  as well as  weakly-singular integral equation.
Indeed, the our results for the hyper-singular case are briefly generalized in the appendix, where we also comment on slit problems.




\section{Preliminaries}
\label{section:preliminaries}
\subsection{General notation}
Throughout and without any ambiguity, $|\cdot|$ denotes the absolute value of scalars, the Euclidean norm of vectors in $\R^2$,  the cardinality of a discrete set, the measure of a set in $\R$ (e.g., the length of an interval), or the arclength of a curve in $\R^{2}$.
We write $A\lesssim B$ to abbreviate $A\le cB$ with some generic constant $c>0$, which is clear from the context.
Moreover, $A\simeq B$ abbreviates $A\lesssim B\lesssim A$. 
Throughout, mesh-related quantities have the same index, e.g., $\NN_\coarse$ is the set of nodes of the partition $\QQ_\coarse$, and $h_\coarse$ is the corresponding local mesh-width function etc. 
The analogous notation is used for partitions $\QQ_\fine$ resp.\ $\QQ_\ell$ etc. 
We use  \,$\widehat{\cdot}$\, to transform notation on  the boundary to the  parameter domain, e.g., $\widehat\QQ_\ell$ is the partition of the parameter domain corresponding to the partition $\QQ_\ell$ of $\Gamma$. 
Throughout, we make the the following convention: If $\NN_\coarse$ is a set of nodes and $\alpha_\coarse(z)\ge 0$ is defined for all $z\in\NN_\coarse$, then 
\begin{align}\label{eq:estimator convention}
\alpha_\coarse:=\alpha_\coarse(\NN_\coarse), \quad\text{where}\quad \alpha_\coarse(\mathcal{S}_\coarse)^2:=\sum_{z\in\mathcal{S}_\coarse}\alpha_\coarse(z)^2\quad\text{for all }\mathcal{S}_\coarse\subseteq\NN_\coarse.
\end{align}

\def\Cgamma{C_\Gamma}
\subsection{Sobolev spaces}
\label{section:sobolev}
The usual Lebesgue and Sobolev spaces on $\Gamma$ are denoted by $L^2(\Gamma)=H^0(\Gamma)$ and  
$H^1(\Gamma)$. 
For measurable $\Gamma_0\subseteq\Gamma$, we define the corresponding seminorm 
 \begin{align}
 |u|_{H^{1}({\Gamma_0})} := \norm{\partial_\Gamma u}{L^2(\Gamma_0)}\quad \text{for all }u\in H^1(\Gamma)
 \end{align}
with the arclength derivative $\partial_\Gamma$.
It holds that 
\begin{align}
\norm{u}{H^1(\Gamma)}^2= \norm{u}{L^2(\Gamma)}^2+|u|_{H^1(\Gamma)}^2\quad\text{for all }u\in H^1(\Gamma).
\end{align}
Moreover, $\H^1(\Gamma)$ is the space of 
$H^1(\Gamma)$ functions, which have a vanishing trace on the relative boundary 
$\partial\Gamma$ equipped with the same norm.
Sobolev spaces of fractional order $0<\sigma<1$ are defined by
the 
$K$-method of interpolation~\cite[Appendix B]{mclean}: 
For $0<\sigma<1$, let
$H^\sigma(\Gamma) := [L^2(\Gamma),H^1(\Gamma)]_\sigma$.

For $0<\sigma\le1$, Sobolev spaces of negative order are defined by duality
$H^{\mp\sigma}(\Gamma) := H^{\pm\sigma}(\Gamma)^*$, 
 where duality is understood with
respect to the extended $L^2(\Gamma)$-scalar
product $\dual\cdot\cdot_\Gamma$. 
Finally, we define $H^{\pm \sigma}_0(\Gamma) = \set{v\in H^{\pm \sigma}(\Gamma)}{\dual{v}{1}_{\Gamma} = 0}$
for all $0\le \sigma\le1$.

All details and equivalent definitions of the Sobolev spaces 
are, for instance, found in the monographs~\cite{hw,mclean,ss}.

\subsection{Hyper-singular integral equation}
\label{subsec:hypsing}%
The hyper-singular integral equation~\eqref{eq:hyper strong} employs the  hyper-singular operator $\mathfrak{W}$ as well as the adjoint double-layer operator $\mathfrak{K}'$.
With the  fundamental solution $G(x,y):=-\frac{1}{2\pi}\log|x-y|$ of the 2D Laplacian and the outer normal vector $\nu$, 
these have  the following boundary integral representations
\begin{align}
 \mathfrak{W}v(x)
 = \frac{\partial_x}{\partial\nu(x)}\int_\Gamma v(y)\frac{\partial_y}{\partial\nu(y)}G(x,y)\,dy
\quad \text{and} \quad
 \mathfrak{K}'\psi(x)
 = \int_\Gamma \psi(y)\frac{\partial_x}{\partial\nu(x)}G(x,y)\,dy
\end{align}
for smooth densities $v,\psi:\Gamma\to\R$.

For $0\le \sigma\le1$, the hyper-singular integral operator 
$\WW:H^{\sigma}(\Gamma)\to H^{\sigma-1}(\Gamma)$  and the adjoint double-layer operator $\mathfrak{K}':H^{\sigma-1}(\Gamma)\to H^\sigma(\Gamma)$ are well-defined, linear, and continuous. 


For connected  $\Gamma=\partial\Omega$ and $\sigma=1/2$, the 
operator $\WW$ is symmetric and elliptic up to the constant functions, i.e., 
$\WW:H^{1/2}_0(\Gamma)\to H^{-1/2}_0(\Gamma)$ is 
elliptic. In particular 
\begin{align}
\edual{u}{v}:=\dual{\WW u}{v}_{\Gamma} + \dual{u}{1}_{\Gamma}\dual{v}{1}_{\Gamma}
\end{align}
defines an equivalent scalar product on $H^{1/2}(\Gamma)$ with corresponding norm $\enorm{\cdot}$.
Moreover, there holds  the additional mapping property $\mathfrak{K}':H_0^{-1/2}(\Gamma)\to H_0^{-1/2}(\Gamma)$.

With this notation and provided that $\phi\in H^{-1/2}_0(\Gamma)$, the strong form~\eqref{eq:hyper strong} is equivalently stated in variational form: Find $u\in H^{1/2}(\Gamma)$ such that
\begin{align}
\label{eq:hyper weak}
 \edual{u}{v} = \dual{(1/2-\mathfrak{K}')\phi}{v}_\Gamma
 \quad\text{for all }v\in H^{1/2}(\Gamma).
\end{align}
Therefore, the  Lax-Milgram lemma applies and proves that \eqref{eq:hyper weak} resp.\ \eqref{eq:hyper strong}  admits a unique solution $u\in H^{1/2}(\Gamma)$.
Details are found, e.g., in \cite{hw,mclean,ss,s}.

\subsection{Boundary parametrization}
\label{subsec:boundary parametrization}
We assume that $\Gamma$ is parametrized by a continuous and piecewise continuously differentiable path $\Gamma:[a,b]\to \Gamma$ such that $\gamma|_{(a,b)}$ is injective.
In particular, $\gamma|_{[a,b)}$ and  $\gamma|_{(a,b]}$ are bijective.
Throughout and by abuse of notation, we write $\gamma^{-1}$ for the inverse of $\gamma|_{[a,b)}$ resp. $\gamma|_{(a,b]}$.
The meaning will be clear from the context.

For the left- and right-hand derivative of $\gamma$, we assume that {$\gamma^{\prime_\ell}(t)\neq 0$ for $t\in(a,b]$ and $\gamma^{\prime_r}(t)\neq 0$  for $t\in [a,b)$.}
Moreover, we assume for all $c>0$ that $\gamma^{\prime_\ell}(t)
+c\gamma^{\prime_r}(t)\neq0$  for $t\in(a,b)$ and $\gamma^{\prime_\ell}(b)+c\gamma^{\prime_r}(a)\neq0$.

\subsection{Boundary discretization}
\label{section:boundary:discrete}
In the following, we describe the different quantities, which define the discretization.

{\bf Nodes $\boldsymbol{z_{\coarse,j}=\gamma(\widehat{z}_{\coarse,j})\in\mathcal{N}_\coarse}$.}\quad Let $\mathcal{N}_\coarse:=\set{z_{\coarse,j}}{j=1,\dots,n_\coarse}$ and $z_{\coarse,0}:=z_{n_\coarse}$ 
 be a set of nodes. We suppose that $z_{\coarse,j}=\gamma(\widehat{z}_{\coarse,j})$ for some $\widehat{z}_{\coarse,j}\in[a,b]$ with
$a=\widehat{z}_{\coarse,0}<\widehat{z}_{\coarse,1}<\widehat{z}_{\coarse,2}<\dots<\widehat{z}_{\coarse,n_\coarse}=b$ such that 
$\gamma|_{[\widehat{z}_{\coarse,j-1},\widehat{z}_{\coarse,j}]}\in C^1([\widehat{z}_{\coarse,j-1},\widehat{z}_{\coarse,j}])$.

{\bf Multiplicity $\boldsymbol{\#_\coarse z}$, $\#_\coarse \mathcal{S}_\coarse$, and knots $\boldsymbol{\KK_\coarse}$.}\quad
Let $p\in\N$ be some fixed polynomial order.
Each  interior node $z_{\coarse,j}$ has a multiplicity $\#_\coarse z_{\coarse,j}\in\{1,2\dots, p\}$ and $\#_\coarse {z}_{\coarse,0}=\#_\coarse z_{n_\coarse}=p+1$. 
For $\mathcal{S}_\coarse\subseteq\NN_\coarse$, we set 
\begin{align}
\#_\coarse\mathcal{S}_\coarse:=\sum_{z\in\mathcal{S}_\coarse} \#_\coarse z.
\end{align}
The multiplicities induce the  knot vector
\begin{align}
\KK_\coarse=(\underbrace{z_{\coarse,1},\dots,z_{\coarse,1}}_{\#_\coarse z_{\coarse,1}-\text{times}},\dots,\underbrace{z_{\coarse,n_\coarse},\dots,z_{\coarse,n_\coarse}}_{\#_\coarse z_{\coarse,n_\coarse}-\text{times}}).
\end{align} 

{\bf Elements $\boldsymbol{Q_{\coarse,j}}$ and  partition $\boldsymbol{\QQ_\coarse}$.} \quad
Let $\QQ_\coarse=\{Q_{\coarse,1},\dots,Q_{\coarse,n_\coarse}\}$ be the partition of $\Gamma$ into compact and connected segments $Q_{\coarse,j}=\gamma(\widehat{Q}_{\coarse,j})$ with $\widehat{Q}_{\coarse,j}=[\widehat{z}_{\coarse,j-1},\widehat{z}_{\coarse,j}]$.

{\bf Local mesh-sizes $\boldsymbol{h_{Q}}$, $\boldsymbol{\widehat{h}_{Q}}$ and $\boldsymbol{h_\coarse}$, $\boldsymbol{\widehat{h}_\coarse}$.}\quad
For each element $Q\in\QQ_\coarse$, let $h_Q:=|Q|$ be its arclength on the physical boundary and $\widehat{h}_Q=|\gamma^{-1}(Q|)$ its length in the parameter domain.
Note that the lengths $h_{Q}$ and $\widehat{h}_{Q}$ of an element $Q$ are equivalent, and the equivalence constants depend only on $\gamma$.
We define the local mesh-width function $h_\coarse\in L^\infty(\Gamma)$ by $h_\coarse|_Q:=h_{Q}$.
Additionally, we define $\widehat{h}_\coarse\in L^\infty(\Gamma)$ by $\widehat{h}_\coarse|_Q:=\widehat{h}_{Q}$. 

{\bf Local mesh-ratio $ \boldsymbol{\widehat{\kappa}_\coarse}$.}\quad
We define the {local mesh-ratio} by
\begin{align}\label{eq:meshratio}
\widehat{\kappa}_\coarse&:=\max\set{\widehat{h}_{Q}/\widehat{h}_{Q'}}{{Q},{Q}'\in\QQ_\coarse \text{ with }  Q\cap Q'\neq \emptyset}.
\end{align}

{\bf Patches $\boldsymbol{\pi_\coarse^m(\Gamma_0)}$ and $\boldsymbol{\Pi_\coarse^m(\Gamma_0)}$.}
For $\Gamma_0\subseteq\Gamma$, we inductively define patches  by
\begin{align}
 \pi_\coarse^0(\Gamma_0) := \Gamma_0,
 \quad 
 \pi_\coarse^m(\Gamma_0) := \bigcup\set{Q\in\QQ_\coarse}{ {Q}\cap \pi_\coarse^{m-1}(\Gamma_0)\neq \emptyset}.
\end{align}
The corresponding set of elements is defined as
\begin{align}
 \Pi_\coarse^m(\Gamma_0) := \set{Q\in \QQ_\coarse}{ {Q} \subseteq \pi_\coarse^m(\Gamma_0)},
 \quad\text{i.e.,}\quad
 \pi_\coarse^m(\Gamma_0) = \bigcup\Pi_\coarse^m(\Gamma_0).
\end{align}
To abbreviate notation, we set $\pi_\coarse(\Gamma_0) := \pi_\coarse^1(\Gamma_0)$ and $\Pi_\coarse(\Gamma_0) := \Pi_\coarse^1(\Gamma_0)$.
If $\Gamma_0=\{z\}$ for some $z\in\Gamma$, we write  $\pi^m_\coarse(z):=\pi^m_\coarse(\{z\})$ and $\Pi_\coarse^m(z) := \Pi_\coarse^m(\{z\})$, where we skip the index  for $m=1$ as before.

\subsection{Mesh-refinement}
\label{section:mesh-refinement}
We suppose that we are given fixed initial knots $\KK_0$.
For refinement, we use the following strategy.

 \begin{algorithm}\label{alg:refinement}
\textbf{Input:} Knot vector  $\KK_\coarse$, marked nodes $\MM_\coarse\subseteq\NN_\coarse$, local mesh-ratio $\widehat\kappa_0\ge1$.
\begin{enumerate}[\rm(i)]
\item Define the set of marked elements $\MM_{\coarse}':=\emptyset$.
\item If both nodes of an element $Q\in \QQ_\coarse$ belong to $\MM_\coarse$, mark  $Q$ by adding it to $\MM_{\coarse}'$.
\item For all other nodes in $\MM_\coarse$, increase the multiplicity if it is less or equal to $p-1$. 
Otherwise mark the elements which contain one of these nodes,  by adding them to $\MM_{\coarse}'$.
\item
Recursively enrich $\MM_\coarse'$ by 
$\mathcal{U}_\coarse':=\{Q\in\QQ_\coarse\setminus\MM_\coarse':$ $\exists Q'\in\MM_\coarse'\quad Q'\cap Q\neq\emptyset$ and $\max\{\widehat h_Q/\widehat h_{Q'},\widehat h_{Q'}/\widehat h_Q\}>\widehat\kappa_0\}$
until $\mathcal{U}_\coarse'=\emptyset$. 

\item
Bisect all $ Q\in \MM_\coarse'$ in the parameter domain by inserting the midpoint of  $\gamma^{-1}(Q)$ with multiplicity one to the current knot vector.  
\end{enumerate}
\textbf{Output:} Refined knot vector $\KK_\fine=:\refine(\KK_\coarse,\MM_\coarse)$.
\end{algorithm}

The optimal 1D bisection algorithm in~step {\rm(iii)--(iv)} is analyzed in \cite{meshoneD}.
Clearly, $\KK_\fine=\refine(\KK_\coarse,\MM_\coarse)$ is finer than $\KK_\coarse$ in the sense that $\KK_\coarse$ is a subsequence of $\KK_\fine$.
For any knot vector $\KK_\coarse$ on $\Gamma$, we define $\refine(\KK_\coarse)$ as the set of all knot vectors $\KK_\fine$ on $\Gamma$ such that there exist knot vectors $\KK_{(0)},\dots,\KK_{(J)}$ and corresponding marked nodes $\MM_{(0)},\dots,\MM_{(J-1)}$ with $\KK_\fine$ $=\KK_{(J)}$ $=\refine(\KK_{(J-1)},\MM_{(J-1)}),\dots,\KK_{(1)}$ $=\refine(\KK_{(0)},\MM_{(0)})$, and $\KK_{(0)}=\KK_\coarse$. 
Note that $\refine(\KK_\coarse,\emptyset)=\KK_\coarse$, wherefore $\KK_\coarse\in\refine(\KK_\coarse)$. 
We define the set of all \textit{admissible}  knot vectors on $\Gamma$ as 
\begin{align}
\mathbb{K}:=\refine(\KK_0).
\end{align}
According to \cite[Theorem~2.3]{meshoneD}, there holds for arbitrary $\KK_\coarse\in\K$ that 
\begin{align}\label{eq:two kappa}
\widehat h_Q/\widehat h_{Q'}\le 2\widehat\kappa_0 \quad\text{for all }  Q,Q'\in\QQ_\coarse \text{ with }Q\cap Q'\neq \emptyset.
\end{align}
Indeed, one can easily show that 
 $\mathbb{K}$ coincides with the set of all knot vectors $\KK_\coarse$ which are obtained via iterative bisections in the parameter domain and arbitrary knot multiplicity increases, which satisfy \eqref{eq:two kappa}.

\subsection{B-splines and NURBS}
\label{subsec:splines}
Throughout this subsection, we consider an arbitrary but fixed sequence {$\widehat{\mathcal{K}}_\coarse:=(t_{\coarse,i})_{i\in\Z}$} on $\R$ with multiplicities $\#_\coarse t_{\coarse,i}$ which satisfy
$t_{\coarse,i-1}\leq t_{\coarse,i}$ for $i\in \Z$ and $\lim_{i\to \pm\infty}t_{\coarse,i}=\pm \infty$.
Let $\widehat{\mathcal{N}}_\coarse:=\set{t_{\coarse,i}}{i\in\Z}=\set{\widehat{{z}}_{\coarse,j}}{j\in \Z}$ denote the corresponding set of nodes with $\widehat{{z}}_{\coarse,j-1}<\widehat{{z}}_{\coarse,j}$ for $j\in\Z$.
Throughout, we use the convention that $(\cdot)/0:=0$.
For $i\in\Z$, the $i$-th \textit{B-spline} of degree $p$ is defined for $t\in\R$ inductively by
\begin{align}
\begin{split}
\widehat B_{\coarse,i,0}(t)&:=\chi_{[t_{\coarse,i-1},t_{\coarse,i})}(t),\\
\widehat B_{\coarse,i,p}(t)&:=\frac{t-t_{\coarse,i-1}}{t_{\coarse,i-1+p}-t_{\coarse,i-1}}  \widehat B_{\coarse,i,p-1}(t)+\frac{t_{\coarse,i+p}-t}{t_{\coarse,i+p}-t_{\coarse,i}} \widehat B_{\coarse,i+1,p-1}(t) \quad \text{for } p\in \N.
\end{split}
\end{align}
The following lemma collects some
basic properties of B-splines; see, e.g., \cite{Boor-SplineBasics}.

\begin{lemma}\label{lem:properties for B-splines}
For $-\infty<a<b<\infty$,  $I=[a,b)$, and $p\in \N_0$, the following assertions  {\rm(i)--(vi)} hold:
\begin{enumerate}[\rm(i)]
\item \label{item:spline basis}
The set $\set{\widehat B_{\coarse,i,p}|_I}{i\in \Z\wedge \widehat B_{\coarse,i,p}|_I\neq 0}$ is a basis of the space of all right-continuous $\widehat{\mathcal{N}}_\coarse$-piecewise polynomials of degree $\le p$ on $I$, which are, at each knot $t_{\coarse,i}$, $p-\#_\coarse t_{\coarse,i}$ times continuously differentiable if $p-\#_\coarse t_{\coarse,i}> 0$ { resp.\ continuous for $p=\#_\coarse t_{\coarse,i}$.}
\item \label{item:B-splines local} For $i\in\Z$, the B-spline $\widehat B_{\coarse,i,p}$ vanishes outside the interval $[t_{\coarse,i-1},t_{\coarse,i+p})$. 
It is positive on the open interval $(t_{\coarse,i-1},t_{\coarse,i+p})$.
\item \label{item:B-splines determined} For $i\in \Z$,  the B-splines $\widehat B_{\coarse,i,p}$ is completely determined by the $p+2$ knots $t_{\coarse,i-1},\dots,t_{\coarse,i+p}$, wherefore we also write
\begin{align}
\widehat B(\cdot|t_{\coarse,i-1},\dots,t_{\coarse,i+p}):=\widehat B_{\coarse,i,p}.
\end{align}

\item\label{item:B-splines partition} The  B-splines of degree $p$ form a (locally finite) partition of unity, i.e.,
\begin{equation}
\sum_{i \in\Z} \widehat B_{\coarse,i,p}=1\quad \text{on }\R.
\end{equation}
\item\label{item:interpolatoric} For $i\in \Z$ with $t_{\coarse,i-1}<t_{\coarse,i}=\dots=t_{\coarse,+p}<t_{\coarse,i+p+1}$, it holds that 
\begin{align}
\widehat B_{\coarse,i,p}(t_{\coarse,i}-)=1\quad\text{and} \quad \widehat B_{\coarse,i+1,p}(t_{\coarse,i})=1,
\end{align}
where $\widehat B_{\coarse,i,p}(t_{\coarse,i}-)$ denotes the left-hand limit at $t_{\coarse,i}$.

\item \label{item:derivative of splines}
For $p\ge 1 $ and $i\in\Z$, it holds for the right derivative
\begin{equation}\label{eq:derivative of splines}
\widehat B_{\coarse,i,p}'^{_r}=\frac{p}{t_{\coarse,i+p-1}-t_{\coarse,i-1}} \widehat B_{\coarse,i,p-1}-\frac{p}{t_{\coarse,i+p}-t_{\coarse,i}}\widehat B_{\coarse,i+1,p-1},
\end{equation}
\hfill\qed
\end{enumerate}
\end{lemma}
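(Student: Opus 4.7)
The plan is to induct on the polynomial degree $p$ throughout and to exploit the recursive definition; parts (ii)--(vi) drop out almost mechanically, while (i) requires an additional dimension-counting argument.

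First I would address (ii) and (iii). The base case $p=0$ is immediate from $\widehat B_{\coarse,i,0}=\chi_{[t_{\coarse,i-1},t_{\coarse,i})}$. In the inductive step, the recursion writes $\widehat B_{\coarse,i,p}$ as a nonnegative combination of $\widehat B_{\coarse,i,p-1}$ and $\widehat B_{\coarse,i+1,p-1}$, whose supports by hypothesis lie in $[t_{\coarse,i-1},t_{\coarse,i+p-1})$ and $[t_{\coarse,i},t_{\coarse,i+p})$. Together these yield the support $[t_{\coarse,i-1},t_{\coarse,i+p})$ of $\widehat B_{\coarse,i,p}$ and show that only the knots $t_{\coarse,i-1},\dots,t_{\coarse,i+p}$ enter its definition. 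Strict positivity on the open interval follows since the rational coefficients in the recursion are strictly positive there and the smaller-degree splines are positive on overlapping subintervals. Property (vi) follows by differentiating the recursion and a short induction on $p$.

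For (iv), substituting the recursion into $\sum_i \widehat B_{\coarse,i,p}(t)$ and reindexing one of the two sums produces
\begin{align*}
\sum_{i\in\Z}\widehat B_{\coarse,i,p}(t)=\sum_{i\in\Z}\widehat B_{\coarse,i,p-1}(t)\,\Bigl[\frac{t-t_{\coarse,i-1}}{t_{\coarse,i+p-1}-t_{\coarse,i-1}}+\frac{t_{\coarse,i+p-1}-t}{t_{\coarse,i+p-1}-t_{\coarse,i-1}}\Bigr]=\sum_{i\in\Z}\widehat B_{\coarse,i,p-1}(t),
\end{align*}
which equals $1$ by the inductive hypothesis. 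For (v), the assumption $t_{\coarse,i}=\dots=t_{\coarse,i+p}$ forces the recursion trees for $\widehat B_{\coarse,i,p}$ and $\widehat B_{\coarse,i+1,p}$ to collapse under the convention $(\cdot)/0:=0$, so that a direct evaluation at $t_{\coarse,i}{-}$ resp.\ $t_{\coarse,i}$, combined with (ii) and (iv), pins down the two interpolatory values.

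The main obstacle will be (i). I would let $V$ denote the space of right-continuous $\widehat\NN_\coarse$-piecewise polynomials of degree $\le p$ on $I$ with the prescribed smoothness at interior knots. A direct count yields
\begin{align*}
\dim V=(p+1)\,\#\{\text{elements in }I\}-\sum_{\text{interior knots }t\in I}\bigl(p+1-\#_\coarse t\bigr),
\end{align*}
which after a brief combinatorial bookkeeping matches the number of indices $i$ with $\widehat B_{\coarse,i,p}|_I\neq 0$. That each $\widehat B_{\coarse,i,p}$ lies in $V$ follows inductively from the recursion: a knot of multiplicity $m$ appears $m$ times in the knot sequence and reduces the differentiability by exactly $m$ across that knot, leaving $C^{p-m}$ continuity. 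Hence only linear independence of $\{\widehat B_{\coarse,i,p}|_I\}$ remains, which I would obtain via the classical local-linear-independence argument: the banded structure from (ii) localizes the question to a single element, and there Marsden's identity exhibits every polynomial of degree $\le p$ as a combination of the relevant B-splines, forcing them to form a local, and hence global, basis.
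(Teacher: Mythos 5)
The paper does not actually prove this lemma: it is stated as a collection of classical facts with a pointer to de Boor's \emph{Spline Basics} \cite{Boor-SplineBasics}, so there is no in-paper argument to compare against. Your sketch is essentially the standard proof from that reference (induction on $p$ through the recursion for (ii), (iii), (iv), (vi); the collapsed recursion under the convention $(\cdot)/0:=0$ for (v); and the Curry--Schoenberg-type argument --- dimension count, membership in the smoothness class, local linear independence via Marsden's identity --- for (i)), and I see no gap in it. Two points deserve slightly more care than your wording suggests. In (v), the phrase ``combined with (ii) and (iv)'' is not by itself enough: up to $p+1$ B-splines are supported immediately to the left of $t_{\coarse,i}$, so the partition of unity does not single out $\widehat B_{\coarse,i,p}$; what does the work is exactly the collapse you mention, namely that $t_{\coarse,i}=\dots=t_{\coarse,i+p}$ kills the second term of the recursion, leaving $\widehat B_{\coarse,i,p}(t)=\frac{t-t_{\coarse,i-1}}{t_{\coarse,i}-t_{\coarse,i-1}}\widehat B_{\coarse,i,p-1}(t)$ on $[t_{\coarse,i-1},t_{\coarse,i})$, whence the claim follows by induction on $p$ (and analogously for $\widehat B_{\coarse,i+1,p}$ at $t_{\coarse,i}$). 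In (i), note that since the knot sequence is bi-infinite and $I=[a,b)$ is half-open with right-continuity built into the definition, the only constraints in the dimension count come from knots strictly inside $(a,b)$; with $C^{p-m}$ continuity contributing $p+1-m$ conditions per interior knot of multiplicity $m$, both $\dim V$ and the number of indices $i$ with $\widehat B_{\coarse,i,p}|_I\neq0$ equal $(p+1)+\sum m$, and the Marsden-identity argument on a single element (exactly $p+1$ active B-splines spanning the $(p+1)$-dimensional polynomial space) gives the local, hence global, linear independence of the restrictions, just as you propose.
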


In addition to the knots $\widehat{\mathcal{K}}_\coarse=(t_{\coarse,i})_{i\in\Z}$, we consider fixed positive weights {$\mathcal{W}_\coarse:=(w_{\coarse,i})_{i\in\Z}$} with $w_{\coarse,i}>0$.
For $i\in \Z$ and $p\in \N_0$, we define the $i$-th 
NURBS by
\begin{equation}
\widehat R_{\coarse,i,p}:=\frac{w_{\coarse,i} \widehat B_{\coarse,i,p}}{\sum_{k\in\Z}  w_{\coarse,k}\widehat B_{\coarse,k,p}}.
\end{equation}
Note that the denominator is locally finite and positive.
For any $p\in\N_0$, we define the spline space as well as the rational spline space 
\begin{equation}\label{eq:NURBS space defined} 
\widehat{\mathcal{S}}^p(\widehat{\mathcal{K}}_\coarse):={\rm span} \,
\set{\widehat B_{\coarse,i,p}}{i\in\Z}\quad\text{and}\quad
\widehat{\mathcal{S}}^p(\widehat{\mathcal{K}}_\coarse,\mathcal{W}_\coarse):={\rm span} \,
\set{\widehat R_{\coarse,i,p}}{i\in\Z}.
\end{equation}

\subsection{Ansatz spaces}
\label{section:igabem} 
 We abbreviate $N_0:=\#_0 \NN_0$ and suppose that additionally to the initial knots $\KK_0$,   $\mathcal{W}_0=(w_{0,i})_{i=1-p}^{N_0-p}$ are given initial weights with  $w_{0,1-p}=w_{0,N_0-p}$.
To apply the results of Section~\ref{subsec:splines},  extend the knot sequence in the parameter domain, i.e., $\widehat{\KK}_0=(t_{0,i})_{i=1}^{N_0}$ arbitrarily to $(t_{0,i})_{i\in \Z}$ with $t_{0,-p}=\dots=t_{0,0}=a$, $t_{0,i}\le t_{0,i+1}$, $\lim_{i\to \pm\infty}t_{0,i}=\pm \infty$.
For the extended sequence, we also write  $\widehat{\mathcal{K}}_0$.
We define the weight function 
\begin{align}
\widehat W_0:=\sum_{k=1-p}^{N_0-p} w_{0,k}\widehat B_{0,k,p}|_{[a,b)}.
\end{align}

Let $\KK_\coarse\in\K$ be a knot vector and abbreviate $N_\coarse:=\#_\coarse\NN_\coarse$.
Outside of $(a,b]$, we extend  the corresponding knot sequence $\widehat\KK_\coarse$ as before to guarantee that $\widehat\KK_0$ forms a subsequence of $\widehat\KK_\coarse$.
Via {knot insertion} from $\widehat\KK_0$ to $\widehat\KK_\coarse$,  Lemma~\ref{lem:properties for B-splines} \eqref{item:spline basis} proves the existence and uniqueness of   weights $\WWe_\coarse=(w_{\coarse,i})_{i=1-p}^{N_\coarse-p}$ with
\begin{align}\label{eq:w}
\widehat W_0=\sum_{k=1-p}^{N_0-p} w_{0,k}\widehat B_{0,k,p}|_{[a,b)}=\sum_{k=1-p}^{N_\coarse-p} w_{\coarse,k} \widehat B_{\coarse,k,p}|_{[a,b)}.
\end{align}
By choosing these weights, we ensure that the denominator of the considered rational splines  does not change.
These weights are just convex combinations of the initial weights $\WWe_0$; see, e.g., \cite[Section 11]{Boor-SplineBasics}. 
For $\widehat w\in\WWe_\coarse$, this shows that
\begin{align}\label{eq:weights}
w_{\min}:=\min(\WWe_0)\le\min(\WWe_\coarse)\le \widehat w\le\max(\WWe_\coarse)\le \max(\WWe_0)=:w_{\max}.
\end{align}
Moreover, $\widehat B_{\coarse,1-p}(a)=\widehat B_{\coarse,N_\coarse-p}(b-)=1$ from Lemma \ref{lem:properties for B-splines} \eqref{item:interpolatoric}  implies  that $w_{\coarse,1-p}=w_{\coarse,N_\coarse-p}$.
Finally, we extend $\mathcal{W}_\coarse$ arbitrarily to $(w_{\coarse,i})_{i\in\Z}$ with $w_{\coarse,i}>0$, identify the extension with $\mathcal{W}_\coarse$, and set
\begin{equation}
{\mathcal{S}}^p({\mathcal{K}}_{\coarse},\mathcal{W}_{\coarse}):=\set{\widehat S_\coarse\circ\gamma^{-1}}{\widehat S_\coarse \in \widehat{\mathcal{S}}^p(\widehat{\mathcal{K}}_\coarse,\mathcal{W}_\coarse}.
\end{equation}
Lemma \ref{lem:properties for B-splines} \eqref{item:spline basis} shows that this definition does not depend on how the sequences are extended.
We define the transformed basis functions 
\begin{align}\label{eq:basis def}
{R}_{\coarse,i,p}:=
\widehat R_{\coarse,i,p}\circ\gamma^{-1}.
\end{align}

We introduce the  ansatz space 
\begin{align}\label{eq:hypsing X0}
\XX_\coarse:=\set{V_\coarse\in{\mathcal{S}}^p({\mathcal{K}}_\coarse,\mathcal{W}_\coarse)}{ V_\coarse(\gamma(a))=V_\coarse(\gamma(b-))}\subset H^{1}(\Gamma)
\end{align}w
Lemma \ref{lem:properties for B-splines}  \eqref{item:spline basis} and  \eqref{item:interpolatoric}  show that bases of these spaces are given by 
\begin{align}\label{eq:hypsing basis}
\set{  R_{\coarse,i,p}}{i=2-p,\dots,N_\coarse-p-1}\cup \{  R_{\coarse,1-p,p}+  R_{\coarse,N_\coarse-p,p}\}.
\end{align}
By Lemma \ref{lem:properties for B-splines} \eqref{item:spline basis}, the ansatz spaces are nested, i.e., 
\begin{align}\label{eq:nested}
\XX_\coarse\subseteq \XX_\fine\quad\text{for all }\KK_\coarse\in\K\text{ and } \KK_\fine\in\refine(\KK_\coarse).
\end{align}

We  define $\phi_\coarse:=P_\coarse\phi$, where $P_\coarse$ is throughout either the  identity or the $L^2$-orthogonal projection onto the space of transformed piecewise polynomials 
\begin{align}
\PP^p(\QQ_\coarse):=\set{\widehat \Psi_\coarse\circ\gamma^{-1}}{\widehat\Psi_\coarse \text{ is }\widehat\QQ_\coarse\text{-piecewise polynomial of degree } p}.
\end{align}
The corresponding Galerkin approximation $U_\coarse\in\XX_\coarse$ reads 
\begin{align}\label{eq:hypsing Galerkin}
 \edual{U_\coarse}{V_\coarse} = \dual{g_\coarse}{V_\coarse}_\Gamma 
\,\text{ for all }V_\coarse\in \XX_\coarse,\quad\text{where }g_\coarse:=(1/2-\mathfrak{K}')\phi_\coarse. 
\end{align}
We note that the choice $\phi_\coarse:=\phi$ is only of theoretical interest as it led to instabilities in our numerical experiments, in contrast to the weakly-singular case \cite{igafaermann,resigabem,gantner17}.


\section{Main result}\label{sec:main result}
In this section, we introduce a novel adaptive algorithm and state its convergence behavior.

\subsection{Error estimators} \label{sec:estimators}
Let $\KK_\coarse\in\K$. 
The definition of the error estimator \eqref{eq:hyper estimator} requires the additional regularity  
$\phi\in L^2(\Gamma)$, which leads to 
$g_\coarse=(1/2-\KK')\phi_\coarse\in L^2(\Gamma)$ due to the mapping properties of 
$\mathfrak{K}'$.
Moreover, note that 
$\WW U_\coarse\in L^2(\Gamma)$ due to the mapping properties of 
$\WW$ and the fact that 
$U_\coarse\in\XX_\coarse\subset H^1(\Gamma)$. 
Therefore, the following error indicators  are well-defined.
We consider the sum of weighted-residual error indicators by \cite{cs95,carstensen97} and  oscillation terms
\begin{subequations}\label{eq:hyper estimator}\begin{align}
\eta_\coarse(z)^2:={\rm res}_\coarse(z)^2+{\rm osc}_\coarse(z)^2 \quad\text{for all }z\in\NN_\coarse,
\end{align}
where
\begin{align}
{\rm res}_\coarse(z):=\norm{h_\coarse^{1/2}(g_\coarse-\WW U_\coarse)}{L^2(\pi_\coarse(z))} \quad \text{and}\quad  {\rm osc}_\coarse(z):=\norm{h_\coarse^{1/2}(\phi-\phi_\coarse)}{L^2(\pi_\coarse(z))}. 
\end{align}
\end{subequations}
Recall the convention~\eqref{eq:estimator convention} for $\alpha_\coarse\in \{\eta_\coarse,{\rm res}_\coarse,{\rm osc}_\coarse\}$.

To incorporate the possibility of knot multiplicity decrease, we define the  knots $\KK_{\coarse\ominus 1}$ by decreasing the multiplicities of all nodes $z\in\NN_\coarse$ whose multiplicity is larger than $1$ and  the original  multiplicity $\#_0 z$ if $z\in\NN_0$, i.e., $\NN_{\coarse\ominus 1}:=\NN_\coarse$ and $\#_{\coarse\ominus 1}z:=\max\{\#_\coarse z-1,1,\#_0z\}$ (where we set $\#_0z:=0$ if $z\notin\NN_0$).
Let 
\begin{align}
J_{\coarse\ominus1}:L^2(\Gamma)\to\XX_{\coarse\ominus 1}
\end{align}
denote the corresponding Scott--Zhang-type projection 
from Section~\ref{section:scott} below. 
To measure the approximation error by multiplicity decrease, we consider the following indicators 
\begin{align}\label{eq:inverse estimator}
\mu_\coarse(z):=\norm{  h_\coarse^{-1/2}(1-J_{\coarse\ominus 1})U_\coarse}{L^2(\pi_\coarse^{2p+1}(z))}\text{ for all }z\in\NN_\coarse.
\end{align}
We define $\mu_\coarse$ and $\mu_\coarse({\mathcal{S}}_\coarse)$ as in \eqref{eq:hyper estimator}. 

\subsection{Adaptive algorithm}
We propose the following  adaptive algorithm.
\begin{algorithm}\label{the algorithm}
\textbf{Input:} Adaptivity parameters $0<\theta<1$, $\vartheta\ge0$, $\Cmin\ge 1$, $\Cmark>0$.\\
\textbf{Adaptive loop:} For each $\ell=0,1,2,\dots$ iterate the following steps {\rm(i)--(iv)}:
\begin{itemize}
\item[\rm(i)] Compute Galerkin approximation 
$U_\ell\in\XX_\ell$.
\item[\rm(ii)] Compute refinement and coarsening indicators $\eta_{\ell}(z)$ and $\mu_\ell(z)$ for all $z\in\NN_{\ell}$.
\item[\rm(iii)] Determine an up to the multiplicative constant 
$\Cmin$  
minimal set of nodes $\MM_\ell^1\subseteq\NN_\ell$, which satisfies the D\"orfler marking 
\begin{align}\label{eq:Doerfler}
 \theta\,\eta_{\ell}^2 \le \eta_\ell(\MM_\ell^1)^2.
\end{align}
\item[\rm(iv)]Determine a  set of  nodes $\MM^-_{\ell}\subseteq \set{z\in\NN_\ell}{\#_\ell z>\#_{\ell\ominus 1}z}$ with $|\MM^-_\ell| \le \Cmark|\MM_\ell|$,  which satisfies the following  marking criterion 
\begin{align}\label{eq:invdo}
\mu_{\ell}(\MM_\ell^-)^2\le \vartheta \eta_\ell^2, 
\end{align}
and define $\MM_\ell^2:=\set{z\in\NN_\ell}{\pi_\ell(z)\cap\MM_\ell^{-}\neq\emptyset}$ as well as $\MM_\ell:=\MM_\ell^1\cup\MM_\ell^2$.
\item[\rm(v)] Generate refined intermediate knot vector $\KK_{\ell+1/2}:=\refine(\KK_\ell,\MM_\ell)$ and then coarsen knot vector $\KK_{\ell+1/2}$ to $\KK_{\ell+1}$ from by decreasing the multiplicities of all  $z\in\MM^-_\ell$ by one.
\end{itemize}
\textbf{Output:} Approximations 
$U_\ell$ and error estimators $\eta_\ell$ for all $\ell \in \N_0$.
\end{algorithm}

\begin{remark}\label{rem:feature}
{\rm (a)} By additionally marking the nodes $\MM_\ell^2$, we enforce that the neighboring elements of any node $z\in\MM_\ell^-$, marked for multiplicity decrease, are bisected.
We emphasize that the enriched set $\MM_\ell$ still satisfies the D\"orfler marking with parameter $\theta$ and is minimal up to the multiplicative constant $\Cmin+3\Cmark$.

{\rm (b)} Algorithm~\ref{the algorithm} allows the choice $\vartheta=0$ and $\MM_\ell^-=\emptyset$, and then formally coincides with the adaptive algorithm from \cite{resigabem} for the weakly-singular integral equation.

{\rm (c)} 
Let even $\const{min}\ge 3$.
If we choose in each step $\widetilde\MM_\ell^1$ up to the multiplicative constant $\const{min}/3$ minimal such that $\theta\,\eta_{\ell}^2 \le \eta_\ell(\widetilde\MM_\ell^1)^2$, and define $\MM_\ell^1:=\set{z\in\NN_\ell}{\pi_\ell(z)\cap\widetilde\MM_\ell^1\neq\emptyset}$, then $\MM_\ell^1$ is as in  Algorithm~\ref{the algorithm} {\rm (iii)}, then this leads to standard $h$-refinement with no multiplicity increase and thus no decrease (independently on how $\const{mark}$ and $\vartheta$ are chosen).

\end{remark}


\subsection{Linear and optimal convergence}
\label{sec:main}
Our main result is that Algorithm~\ref{the algorithm} guarantees linear convergence with optimal algebraic rates.
For standard BEM with piecewise polynomials, such a result is proved in \cite{gantumur,fkmp,part1} for weakly-singular integral equations and in \cite{gantumur,part2} for hyper-singular integral equations, where \cite{part1,part2} also account for data oscillation terms.
For IGABEM for the weakly-singular integral equation (but without knot multiplicity decrease), an analogous result is already  proved in our recent work \cite{optigabem}. 
To precisely state the main theorem, let
\begin{align}
\K(N):=\set{\KK_\coarse\in\K}{\#_\coarse\NN_\coarse-\#_0\NN_0\le N}
\end{align}
be the finite set of all  refinements having at most $N$ knots more than $\KK_0$.

Analogously to \cite{axioms}, we introduce the estimator-based approximability constant 
\begin{align}\label{eq:Capprox}
\norm{u}{\mathbb{A}_s}:=\sup_{N\in\N_0} \big( (N+1)^s \inf_{\KK_\coarse\in\K(N)} \eta_\coarse\big)\in\mathbb{R}_{\ge 0} \cup\{\infty\}\quad \text{for all }s>0. 
\end{align}
By this constant, one can characterize the best possible convergence rate.
In explicit terms, this constant is finite if and only if an algebraic convergence rate of $\mathcal{O}(N^{-s})$ for the estimator is possible for suitably chosen  knot vectors.
Similarly, we define 
\begin{align}\label{eq:Ks}
\begin{split}
&\K^1:=\set{\KK_\coarse\in\K}{\#_\coarse z_{\coarse,j}=\max\{1,\#_0 z_{\coarse,j}\}\text{ for all }j=1,\dots,|\NN_\coarse|-1},\\
&\K^1(N):=\K(N)\cap\K^1,
\\
&\K^p:=\set{\KK_\coarse\in\K}{\#_\coarse z_{\coarse,j}=p\text{ for all }j=1,\dots,|\NN_\coarse|-1},\\
&\K^p(N):=\set{\KK_\coarse\in\K^p}{\#_\coarse\NN_\coarse-\#_{0,p}\NN_0\le N} \text{ with }\KK_{0,p}\in\K^p\text{ and }\NN_{0,p}=\NN_0,
\end{split}
\end{align}
and
\begin{align}
\norm{u}{\mathbb{A}^1_s}:=\sup_{N\in\N_0} \big( (N+1)^s \inf_{\KK_\coarse\in\K^1(N)} \eta_\coarse\big)\quad\text{ and }\quad
\norm{u}{\mathbb{A}^p_s}:=\sup_{N\in\N_0} \big( (N+1)^s \inf_{\KK_\coarse\in\K^p(N)} \eta_\coarse\big).
\end{align}
The constant $\norm{u}{\mathbb{A}^1_s}$ characterizes the best possible convergence rate starting from  $\KK_0$ when  only bisection is used and all new nodes have multiplicity $1$.
The constant $\norm{u}{\mathbb{A}^p_s}$ characterizes the best possible rate starting from the coarsest knot vector $\KK_{0,p}\in\K^p$ when only bisection is used  and all new nodes have maximal multiplicity $p$. 
Hence, $\norm{u}{\mathbb{A}^p_s}$ characterizes the rate for standard BEM with continuous piecewise polynomial ansatz functions.
Note that the constants coincide if $p=1$.

The following theorem is  the main result of our work.
The proof is given in Section~\ref{sec:proof}.

\begin{theorem}\label{thm:main}
Let 
$\phi\in L^2(\Gamma)$ 
so that the weighted-residual error estimator   is well-defined.
Then, the estimator $\eta$ from \eqref{eq:hyper estimator} is reliable as well as weakly efficient, i.e., there exist $\Crel,\Ceff>0$ such that, for all $\KK_\coarse\in\K$, 
\begin{align}\label{eq:reliable}
\Crel^{-1}\norm{u-U_\coarse}{\H^{1/2}(\Gamma)}
\le 
\eta_\coarse
\le
\Ceff \big( \norm{h_\coarse^{1/2} \partial_\Gamma( u - U_\coarse )}{L^2(\Gamma)}^2
 + \norm{h_\coarse^{1/2} ( \phi - \phi_\coarse )}{L^2(\Gamma)}^2 \big)^{1/2}.
\end{align}
For each $0<\theta\le1$, there is a constant $\vartheta_{\rm opt}>0$ such that for all $0\le \vartheta<\vartheta_{\rm opt}$ there  exist constants $0<\q{conv}<1$ and $\Cconv>0$ such that Algorithm~\ref{the algorithm} is linearly convergent in the sense that
\begin{align}\label{eq:R-linear}
\eta_{\ell+k}\leq \Cconv \,\q{conv}^k\,\eta_\ell \quad \text{for all }k,\ell\in\N_0.
\end{align}
Moreover, there is a constant $0<\theta_{\rm opt}<1$ such that for all $0<\theta<\theta_{\rm opt}$ and $0\le\vartheta<\vartheta_{\rm opt}$, there exist constants $c_{\rm opt},\Copt>0$ such that, for all  $s>0$, there holds that 
\begin{align}\label{eq:optimal}
c_{\rm opt}\norm{u}{\mathbb{A}_s}
\le
\sup_{\ell\in\N_0}{(\#_\ell\NN_\ell-\#_0\NN_0+1)^{s}}{\eta_\ell}\le\Copt \norm{u}{\mathbb{A}_s}.
\end{align}
Finally, there exist constants $c_{\rm apx},\const{apx}>0$ such that, for all  $s>0$, there holds that 
\begin{align}\label{eq:classes}
c_{\rm apx}\norm{u}{\A_s^1}\le\norm{u}{\A_s}\le 
\min\{\norm{u}{\A_s^1},\norm{u}{\A_s^p}\}\le\const{apx}\norm{u}{\A_s^1}. 
\end{align}

The constants $\Crel$ and $\Ceff$ depend only on $\gamma, p, \widehat\QQ_0, w_{\min},$ and $w_{\max}$.
The constant $\vartheta_{\rm opt}$ depends additionally on $\theta$.
The constants  $\q{conv}$ as well as $\Cconv$ depend further on $\theta$ and $\vartheta$.
The constant $\theta_{\rm opt}$ depends only on $\gamma, p, \widehat\QQ_0, w_{\min},$ and $w_{\max}$, whereas, $\Copt$ depends additionally on $\theta,\vartheta,\Cmin, \Cmark,$ and $s$.
The constant $c_{\rm opt}$ depends only on $\#_0\NN_0$.
Finally, the constants $c_{\rm apx},\const{apx}$ depend only  on $\gamma, p, \widehat\QQ_0, w_{\min}$, $w_{\max}$, and $s$.
\hfill$\square$
\end{theorem}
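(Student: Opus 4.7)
The proof splits into four blocks corresponding to the four assertions \eqref{eq:reliable}, \eqref{eq:R-linear}, \eqref{eq:optimal}, and \eqref{eq:classes}, and follows the \emph{axioms of adaptivity} framework of \cite{axioms}, appropriately extended to cover both the data approximation $\phi_\coarse\approx\phi$ and the new coarsening substep in Algorithm~\ref{the algorithm}.

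For \eqref{eq:reliable}, I would derive reliability by combining $\WW$-ellipticity on $\widetilde H^{1/2}(\Gamma)$, Galerkin orthogonality, and a Scott--Zhang-type quasi-interpolant $J_\coarse:H^{1/2}(\Gamma)\to\XX_\coarse$ from Section~\ref{sec:scott zhang} with the local first-order approximation $\norm{(1-J_\coarse)v}{L^2(Q)}\lesssim h_Q^{1/2}\norm{v}{H^{1/2}(\pi_\coarse(Q))}$. Splitting the residual as $g-\WW U_\coarse=(g-g_\coarse)+(g_\coarse-\WW U_\coarse)$ and testing with $v-J_\coarse v$ yields the indicator ${\rm res}_\coarse$ on the second term and a contribution bounded by $\norm{h_\coarse^{1/2}(g-g_\coarse)}{L^2(\Gamma)}$ on the first; continuity and locality of $\mathfrak{K}'$ together with $\dual{\phi-\phi_\coarse}{1}_\Gamma=0$ reduce this to the oscillation ${\rm osc}_\coarse$. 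Weak efficiency follows from the identity $g_\coarse-\WW U_\coarse=\WW(u-U_\coarse)-(g-g_\coarse)$ combined with the inverse inequality from Section~\ref{sec:inverse inequalities}, which converts $\norm{h_\coarse^{1/2}\WW(u-U_\coarse)}{L^2(\Gamma)}$ into $\norm{h_\coarse^{1/2}\partial_\Gamma(u-U_\coarse)}{L^2(\Gamma)}$.

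For \eqref{eq:R-linear}, I would verify the four axioms of \cite{axioms}: stability on non-refined elements, reduction on refined elements, general quasi-orthogonality, and discrete reliability. On a pure refinement step $\KK_\ell\to\KK_{\ell+1/2}$ with $\XX_\ell\subseteq\XX_{\ell+1/2}$, these reduce, via inverse estimates and the established reliability--efficiency pair, to arguments already carried out in \cite{part2,optigabem}. The genuine new point is the coarsening substep $\KK_{\ell+1/2}\to\KK_{\ell+1}$, which shrinks the ansatz space and therefore perturbs the Galerkin solution in a way not covered by standard Céa-type reasoning. The indicator $\mu_\ell$ is designed precisely so that, combined with the marking rule \eqref{eq:invdo}, it dominates $\enorm{U_{\ell+1/2}-U_{\ell+1}}^2\le C\vartheta\,\eta_\ell^2$ uniformly in $\ell$; choosing $\vartheta<\vartheta_{\rm opt}$ sufficiently small, this perturbation is absorbed into the estimator reduction produced by the bisection substep, yielding contraction of a combined quantity $\enorm{u-U_\ell}^2+\gamma\,\eta_\ell^2$ and hence \eqref{eq:R-linear}. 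The main obstacle of the whole proof is exactly this absorption argument, which has to be made uniform with respect to the admissible knot vector class $\K$.

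Once the axioms and linear convergence are in place, \eqref{eq:optimal} follows from the abstract machinery of \cite{axioms} together with the comparability lemma for Dörfler marking in \cite{gantumur,part1} and the mesh-closure estimate for one-dimensional bisection with an arbitrary marking pattern from \cite{meshoneD}; the lower bound is immediate since $\KK_\ell\in\K(\#_\ell\NN_\ell-\#_0\NN_0)$. For \eqref{eq:classes}, the inclusions $\K^1\subseteq\K$ and (modulo a fixed cost for passing from $\KK_0$ to $\KK_{0,p}$) $\K^p\subseteq\K$ give $\norm{u}{\A_s}\le\min\{\norm{u}{\A_s^1},\norm{u}{\A_s^p}\}$, and the bound by $\const{apx}\norm{u}{\A_s^1}$ is trivial. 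The nontrivial reverse inequality $c_{\rm apx}\norm{u}{\A_s^1}\le\norm{u}{\A_s}$ is obtained by a construction: given $\KK_\coarse\in\K(N)$ carrying knots of multiplicity $m>1$, I would replace every such knot by $m$ single-multiplicity knots produced by a finite number of bisections in a neighbourhood, yielding $\KK_\fine\in\K^1$ with $\#_\fine\NN_\fine-\#_0\NN_0\lesssim pN$ and $\XX_\coarse\subseteq\XX_\fine$, and hence by the reliability and (weak) efficiency of $\eta$, $\eta_\fine\lesssim\eta_\coarse$, which gives the claim after taking suprema over $N$.
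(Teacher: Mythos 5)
Most of your outline is workable, though partly by different routes than the paper: for reliability in \eqref{eq:reliable} you propose the classical argument (test the error equation with $v-J_\coarse v$ and absorb the data-approximation term via the $L^2$-projection estimate), whereas the paper deduces reliability from discrete reliability (E3) plus a density argument (Section~\ref{sec:reliable hyper}); and for \eqref{eq:R-linear} you propose a contraction of $\enorm{u-U_\ell}^2+\lambda\,\eta_\ell^2$, whereas the paper verifies general quasi-orthogonality (E4) by telescoping with the intersection knot vectors $\KK_{k\cap(k+1)}$, the coarsening indicator $\mu_k$, and the decay ${\rm osc}_k^2-{\rm osc}_{k+1}^2$ (Section~\ref{subsec:hypsing orthogonality}); since both data approximation and coarsening destroy Galerkin orthogonality, your absorption step would in effect have to reproduce exactly that argument, so this is a difference of packaging rather than substance. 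Your weak-efficiency sketch also silently drops the term $\norm{u-U_\coarse}{H^{1/2}(\Gamma)}$ produced by the inverse estimate for $\mathfrak{W}$; the paper removes it via $(1-G_\coarse)=(1-G_\coarse)(1-J_\coarse)(1-G_\coarse)$ and the approximation property of $J_\coarse$.

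The genuine gap is in your treatment of \eqref{eq:classes}. Your construction for $c_{\rm apx}\norm{u}{\A_s^1}\le\norm{u}{\A_s}$ replaces a knot of multiplicity $m>1$ by $m$ nearby simple knots and claims $\XX_\coarse\subseteq\XX_\fine$. This is false: a multiplicity-$m$ knot at a node $z$ admits splines that are only $C^{p-m}$ at $z$, while every element of $\XX_\fine$ remains $C^{p-1}$ at $z$ no matter how many simple knots are inserted in a neighbourhood — reduced smoothness cannot be recovered by $h$-refinement, so the spaces are not nested. Consequently the inference ``$\eta_\fine\lesssim\eta_\coarse$ by reliability and weak efficiency'' also fails: weak efficiency bounds $\eta_\fine$ by the mesh-weighted quantity $\norm{h_\fine^{1/2}\partial_\Gamma(u-U_\fine)}{L^2(\Gamma)}$ plus oscillations, which is not controlled by $\norm{u-U_\coarse}{H^{1/2}(\Gamma)}$, i.e.\ not by what reliability of $\eta_\coarse$ provides. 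The paper instead proves this inequality by running Algorithm~\ref{the algorithm} with the parameters of Remark~\ref{rem:feature}\,(c), so that all iterates stay in $\K^1$, and then invoking the already established optimality \eqref{eq:optimal} together with the son estimate (R1) (Section~\ref{sec:classes}). Note also that you dismiss the third inequality in \eqref{eq:classes} as trivial; as literally written it is, but its intended content is the nontrivial comparison $\norm{u}{\A_s^p}\lesssim\norm{u}{\A_s^1}$, which the paper proves in the direction where nestedness does hold (same nodes, multiplicities raised to $p$, so $\KK_{\coarse,p}\in\refine(\KK_{\coarse,1})$), using the inverse estimates and discrete reliability (E3) to get $\eta_{\coarse,p}\lesssim\eta_{\coarse,1}$. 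Your final paragraph should be replaced by arguments of this type.
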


\begin{remark}
Theorem~\ref{thm:main} holds accordingly for indirect BEM, where 
 $g=(1/2-\mathfrak{K}')\phi$ in \eqref{eq:hyper strong} is replaced by $g=\phi$, and $g_\coarse=(1/2-\mathfrak{K}')\phi_\coarse$ in~\eqref{eq:hypsing Galerkin} is replaced either by $g_\coarse=\phi$ or by $g_\coarse=\phi_\coarse$. 
Indeed, due to the absence of the operator $\mathfrak{K}'$ for indirect BEM, the proof is even  simplified.
\end{remark}


\section{Proof of Theorem~\ref{thm:main} }
\label{sec:proof}

To prove Theorem~\ref{thm:main}, we follow the abstract convergence theory for adaptive algorithms of \cite{axioms}, which provide a set of so-called \textit{axioms of adaptivity}, 
which automatically guarantee linear convergence at optimal algebraic rate. 
Although we cannot directly apply their result, since it does not cover multiplicity increase or decrease, we will verify slightly modified axioms, which yield Theorem~\ref{thm:main} with the same ideas as in \cite{axioms}.
In Section~\ref{subsec:abstract}, we present these axioms.
Their verification, which is inspired by the corresponding verification for standard BEM \cite{part2}, is postponed to Section~\ref{sec:hypsing stability}--\ref{sec:hypsing discrete reliability} and \ref{subsec:hypsing orthogonality}--\ref{sec:hypsing closure}, after providing some auxiliary results in Section~\ref{sec:interpolation}--\ref{sec:inverse inequalities}.
In Section~\ref{sec:reliable hyper} and \ref{sec:linear convergence}--\ref{sec:optimal convergence}, we briefly show  how these axioms conclude 
reliability in \eqref{eq:reliable}, linear convergence \eqref{eq:R-linear}, and optimal convergence \eqref{eq:optimal}
(along the lines of \cite{axioms}).
Efficiency in \eqref{eq:reliable} is proved in Section~\ref{sec:efficient hyper} similarly as for standard BEM \cite[Section~3.2]{invest}.  
Finally,  Section~\ref{sec:classes} verifies the relation \eqref{eq:classes} between the approximability constants.

\subsection{Axioms of adaptivity}\label{subsec:abstract}
In this section, we formulate node-based versions of the \textit{axioms of adaptivity} of \cite{axioms}. 
These are not satisfied for the error estimator $\eta$ itself, but only for a locally equivalent estimator $\widetilde\eta$.
To introduce this  estimator, we first recall an equivalent mesh-size function that has been constructed in \cite[Proposition~5.8.2]{gantner17} or in \cite[Proposition~4.2]{optigabem} in a slightly different element-based version.

\begin{proposition}\label{prop:h tilde}
For $\KK_\coarse\in\K$ and $z\in\NN_\coarse$, 
let  $z_{\coarse,{\rm left}}\in\NN_\coarse\cap\pi_\coarse(z)$ be the (with respect to $\gamma$) left neighbor and  $z_{\coarse,{\rm right}}\in \NN_\coarse\cap\pi_\coarse(z)$ the right neighbor of $z$. 
Let $\#_\coarse z_{\coarse, \rm left},\#_\coarse z_{\coarse,\rm right}$ be the corresponding multiplicities. 
Then, there exist $0<\ro{eq}<1$ and $\const{eq}>0$ such that
\begin{align}\label{eq:tilde h equivalent}
\const{eq}^{-1}h_\coarse|_{\pi_\coarse(z)}\le
\widetilde h_{\coarse,z}
:=|\gamma^{-1}(\pi_\coarse(z))| \, \ro{eq}^{\#_\coarse z_{\coarse,{\rm left}}+\#_\coarse z+\#_\coarse z_{\coarse,{\rm right}}}
\le \const{eq} h_\coarse|_{\pi_\coarse(z)},
\end{align}
where $\ro{eq}$ depends only on $p$ and $\widehat\QQ_0$ and  $\const{eq}$ depends additionally on $\gamma$.
If additionally $\KK_\fine\in\refine(\KK_\coarse)$, then there exists a constant $0<\ro{ctr}<1$ such that for all $  z\in\NN_\coarse$, whose patch is changed by bisection or multiplicity increase (i.e., $\pi_\coarse(z)\neq\pi_\fine(z)$ or $\#_\coarse z_{\coarse,{\rm left}}\neq\#_\coarse z_{\fine,{\rm left}}$ or $\#_\coarse z\neq\#_\fine z$ or $\#_\coarse z_{\coarse,{\rm right}}\neq\#_\fine z_{\coarse,{\rm right}}$), 
and all $z'\in\NN_\fine$, 
it holds that
\begin{align}\label{eq:h tilde ctr}
\widetilde h_{\fine,z'}\le\ro{ctr}\,\widetilde  h_{\coarse,z}\quad\text{if }z'\in \pi_\coarse(z)\setminus\NN_\coarse\quad\text{ or }\quad\text{if }z'=z.
\end{align}
where $\ro{ctr}$ depends only on $p$ and $\widehat\QQ_0$. \hfill$\square$
\end{proposition}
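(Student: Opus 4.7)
The plan is to establish the equivalence \eqref{eq:tilde h equivalent} first and then derive the contraction \eqref{eq:h tilde ctr} by a careful case analysis in the refinement step, the whole argument hinging on a suitable choice of the constant $\ro{eq}\in(0,1)$ that is fixed at the end. For the equivalence I observe that, by construction, every multiplicity satisfies $1\le \#_\coarse z\le p+1$, so the exponent $\#_\coarse z_{\coarse,{\rm left}}+\#_\coarse z+\#_\coarse z_{\coarse,{\rm right}}$ takes values in the finite set $\{3,\dots,3(p+1)\}$. Consequently, the factor $\ro{eq}^{\#_\coarse z_{\coarse,{\rm left}}+\#_\coarse z+\#_\coarse z_{\coarse,{\rm right}}}$ is trapped in $[\ro{eq}^{3(p+1)},\ro{eq}^{3}]\subset(0,1)$ depending only on $p$. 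Moreover, the piecewise $C^1$-regularity of $\gamma$ with non-vanishing one-sided derivatives (Section~\ref{subsec:boundary parametrization}) yields the element-wise equivalence $|Q|\simeq|\gamma^{-1}(Q)|$, and the local mesh-ratio bound \eqref{eq:two kappa} then gives $|\gamma^{-1}(\pi_\coarse(z))|\simeq h_\coarse|_{\pi_\coarse(z)}$. Multiplying the two equivalences yields \eqref{eq:tilde h equivalent} with $\const{eq}$ depending only on $\gamma,p,\widehat\QQ_0$.

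For the contraction I distinguish the two situations allowed in the statement, namely $z'=z$ and $z'\in\pi_\coarse(z)\setminus\NN_\coarse$. The change from $\KK_\coarse$ to $\KK_\fine$ within the patch of $z$ is either (a) bisection of at least one element $Q\in\Pi_\coarse(z)$, or (b) a strict multiplicity increase of $z$ or one of its coarse neighbors, or both. In the pure-multiplicity case (b) with $z'=z$, the length $|\gamma^{-1}(\pi_\fine(z))|$ is unchanged whereas the exponent grows by at least one, yielding a factor of at most $\ro{eq}$. In every bisection case, $|\gamma^{-1}(\pi_\fine(z'))|$ strictly shrinks: if $z'=z$ and $Q\in\Pi_\coarse(z)$ is bisected, then the half of $Q$ away from $z$ disappears from the patch, so that the worst length-ratio (attained when only one of the two patch elements is bisected and the other has the largest possible ratio to $Q$) is at most $\rho:=(1+4\widehat\kappa_0)/(2+4\widehat\kappa_0)<1$ by \eqref{eq:two kappa}; if $z'$ is a newly inserted midpoint of $Q\in\Pi_\coarse(z)$, then $\pi_\fine(z')=Q$ whereas $\pi_\coarse(z)$ additionally contains the other coarse neighbor element of $z$, and \eqref{eq:two kappa} again bounds the length ratio by $2\widehat\kappa_0/(2\widehat\kappa_0+1)\le\rho$.

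The main obstacle is that in the bisection cases the new exponent $\#_\fine z'_{\rm left}+\#_\fine z'+\#_\fine z'_{\rm right}$ may be strictly smaller than the old one, because a coarse neighbor of multiplicity up to $p+1$ can be replaced by a freshly inserted node of multiplicity $1$, with a maximal total decrease of $p$. Consequently, the ratio of exponential factors is at worst $\ro{eq}^{-p}$, and to enforce overall contraction $\ro{eq}$ must be fixed close enough to $1$ that
\begin{equation*}
\rho\cdot \ro{eq}^{-p}<1,\quad\text{i.e.,}\quad \ro{eq}\in\bigl(\rho^{1/p},\,1\bigr).
\end{equation*}
Any such $\ro{eq}$ automatically satisfies the trivial constraint arising from case~(b), so setting $\ro{ctr}:=\max\{\ro{eq},\,\rho\cdot \ro{eq}^{-p}\}<1$ finishes the proof. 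Since $\rho$ depends only on $\widehat\kappa_0$ (hence on $\widehat\QQ_0$) and $p$, the asserted dependencies of $\ro{eq}$, $\const{eq}$, and $\ro{ctr}$ are inherited.
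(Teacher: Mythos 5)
Your argument for the equivalence \eqref{eq:tilde h equivalent} is fine (bounded exponent, $|Q|\simeq\widehat h_Q$ via $\gamma$, and \eqref{eq:two kappa}), and your overall strategy for \eqref{eq:h tilde ctr} — fix $\ro{eq}$ close enough to $1$ so that the geometric shrinking of the patch beats the possible loss of multiplicity weight — is the right one (it is also the idea behind the element-based version the paper cites from \cite[Proposition~4.2]{optigabem}). However, the quantitative core of your contraction proof is flawed: the claim that the exponent can decrease by at most $p$ is false. If \emph{both} elements of $\pi_\coarse(z)$ are bisected (which can happen already in a single call of Algorithm~\ref{alg:refinement} when $z$ and both its neighbors are marked), both old neighbors of $z$ are replaced by multiplicity-one midpoints and the exponent may drop by up to $2p-1$. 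Moreover, since $\KK_\fine\in\refine(\KK_\coarse)$ may arise from several refinement steps, a new node $z'\in\pi_\coarse(z)\setminus\NN_\coarse$ need not satisfy $\pi_\fine(z')=Q$: its fine neighbors can themselves be newly inserted multiplicity-one nodes, so that the exponent of $z'$ may be smaller than that of $z$ by as much as $3p-2$. Consequently the asserted worst case $\rho\,\ro{eq}^{-p}$, and with it the verification that your $\ro{ctr}<1$, is not established by the argument as written; the cases you analyze cover only a single bisection of one patch element and the single-midpoint configuration.

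The statement can still be rescued along your lines, because every additional old neighbor that drops out of the patch is accompanied by an additional geometric gain: by the dyadic structure of iterated bisection, in those configurations the fine patch occupies at most $\tfrac12$ resp.\ $\tfrac34$ of the subdivided coarse element, so the length ratio improves exactly when the multiplicity loss worsens. A short computation using $\widehat\kappa_0\ge1$ (hence $\rho=(1+4\widehat\kappa_0)/(2+4\widehat\kappa_0)\ge 5/6$ and $\rho^2>1/2$) shows, e.g., that the both-bisected case is bounded by $\tfrac12\ro{eq}^{-(2p-1)}\le\rho\,\ro{eq}^{-p}$ once $\ro{eq}^{p}>\rho$, and similarly for the multi-step cases, so your choice $\ro{eq}\in(\rho^{1/p},1)$ and $\ro{ctr}=\max\{\ro{eq},\rho\,\ro{eq}^{-p}\}$ does work in the end. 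But this balancing of extra length reduction against extra multiplicity loss, carried out for arbitrary refinements in $\refine(\KK_\coarse)$ rather than for a single bisection, is precisely the missing core of the proof and must be supplied explicitly.
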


For $\KK_\coarse\in\K$, we  define the  estimator 
\begin{subequations}\label{eq:hyper estimator tilde}
\begin{align}
\widetilde\eta_\coarse(z)^2:=\widetilde{\rm res}_\coarse(z)^2+\widetilde{\rm osc}_\coarse(z)^2 \quad\text{for all }z\in\NN_\coarse,
\end{align}
where
\begin{align}
\widetilde{\rm res}_\coarse(z):=\widetilde h_{\coarse,z}^{\,1/2}\norm{g_\coarse-\WW U_\coarse}{L^2(\pi_\coarse(z))}\quad\text{and}\quad \widetilde{\rm osc}_\coarse(z):=\widetilde h_{\coarse,z}^{\,1/2}\norm{\phi-\phi_\coarse}{L^2(\pi_\coarse(z))}. 
\end{align}
\end{subequations}
In particular, \eqref{eq:tilde h equivalent} implies the local equivalence 
\begin{align}\label{eq:abstract equivalence}
\const{eq}^{-1}\eta_\coarse(z)^2\le \widetilde\eta_\coarse(z)^2\le\const{eq}\eta_\coarse(z)^2\quad\text{for all }z\in\NN_\coarse.
\end{align}

To present the axioms of adaptivity in a compact way, we abbreviate for $\KK_\coarse,\KK_\fine\in\K$ the corresponding perturbation terms
\begin{align}
\varrho_{\coarse,\fine}:=\norm{U_\coarse-U_\fine}{H^{1/2}(\Gamma)}+\norm{\phi_\coarse-\phi_\fine}{H^{-1/2}(\Gamma)}.
\end{align}
Moreover,  we define the set of all nodes in $\NN_\coarse\cap \NN_\fine$ whose patch is identical in $\KK_\coarse$ and $\KK_\fine$
\begin{align}\label{eq:nnr}
\begin{split}
\NNnr_{\coarse,\fine}:=\big\{z\in\NN_\coarse\cap \NN_\fine:\,&\pi_\coarse(z)=\pi_\fine(z), 
\#_\coarse z'=\#_\fine z' \text{ for all } z'\in\{z,z_{\coarse,\rm left},z_{\coarse,\rm right}\}\big\}.
\end{split}
\end{align}
We abbreviate its complement in $\NN_\coarse$ and $\NN_\fine$
\begin{align}
\NNr_{\coarse,\fine}:=\NN_\coarse\setminus\NNnr_{\coarse,\fine}\quad\text{and}\quad\NNr_{\fine,\coarse}:=\NN_\fine\setminus\NNnr_{\coarse,\fine}. 
\end{align}
In Section~\ref{sec:hypsing stability}--\ref{sec:hypsing discrete reliability}, we will verify that if $\vartheta>0$ is chosen sufficiently small, then there exist constants $\Cstab,\Cred,\Cqo, \Cref,\Cdrel, \const{son},\Cclos\ge 1$  and $0\le\q{red},\varepsilon_{\rm qo}<1$ 
 such that the following properties for the estimator (E1)--(E4) and the refinement (R1)--(R3) are satisfied:
\begin{itemize}
\item[(E1)]\textbf{Stability on non-refined node patches:} For all  $\KK_\coarse\in\K$ and  all $\KK_\fine\in\refine(\KK_\coarse)$ as well as for all $\ell\in\N_0$, $\KK_\coarse:=\KK_\ell$, and  $\KK_\fine:=\KK_{\ell+1}$, it holds that
\begin{align*}
|\widetilde\eta_\fine(\NNnr_{\coarse,\fine})-\widetilde\eta_\coarse(\NNnr_{\coarse,\fine})|\le\Cstab\,\varrho_{\coarse,\fine}.
\end{align*}
\item[(E2)]\textbf{Reduction on refined node patches:} For all  $\KK_\coarse\in\K$ and all $\KK_\fine\in\refine(\KK_\coarse)$ as well as for all $\ell\in\N_0$, $\KK_\coarse:=\KK_\ell$ and  $\KK_\fine:=\KK_{\ell+1}$, it holds that
\begin{align*}
\widetilde\eta_\fine(\NNr_{\fine,\coarse})^2\le\q{red}\widetilde\eta_\coarse(\NNr_{\coarse,\fine})^2+\Cred^2\,\varrho_{\coarse,\fine}^2.
\end{align*}
\item[(E3)] \textbf{Discrete reliability:} For all $\KK_\coarse\in\K$ and all $\KK_\fine\in\refine(\KK_\coarse)$,
there exists $\NNr_{\coarse,\fine}\subseteq\RR_{\coarse,\fine}\subseteq\NN_\coarse$ with
 $\#_\coarse\RR_{\coarse,\fine}\le\Cref (\#_\fine\NN_\fine-\#_\coarse\NN_\coarse)$ such that
\begin{align*}
\varrho_{\coarse,\fine}^2\le
\Cdrel^2\,\widetilde\eta_\coarse(\RR_{\coarse,\fine})^2.
\end{align*}
\item[(E4)] \textbf{General quasi-orthogonality:} There holds that
\begin{align*}
0\le\varepsilon_{\rm qo}<\sup_{\delta>0}\frac{1-(1+\delta)(1-(1-\rhored)\theta)}{\Cred+(2+\delta^{-1})\Cstab^2},
\end{align*}
and the sequence of knots $(\KK_\ell)_{\ell\in\N_0}$ satisfies 
that 
\begin{align*}
\sum_{k=\ell}^{\ell+N}(\varrho_{k,k+1}^2-\varepsilon_{\rm qo}\widetilde\eta_k^{\,2})\le\Cqo\, \widetilde\eta_\ell^{\,2}\quad \text{for all }\ell,N\in\N_0. 
\end{align*}
\item[(R1)]\textbf{Son estimate}
For all $\ell\in\N_0$, it holds that 
\begin{align*}
\#_{\ell+1}\NN_{\ell+1}\le \const{son} \#_\ell\NN_\ell.
\end{align*}

\item[(R2)] \textbf{Closure estimate:} 
For all $\ell\in\N_0$, there holds that 
\begin{align*}
\#_\ell\NN_\ell-\#_0\NN_0\le\Cclos \sum_{k=0}^{\ell-1}\#_k\MM_k.
\end{align*}
\item[(R3)] \textbf{Overlay property:} 
For all $\KK_\coarse, \KK_\star\in\K$, there exists a common refinement  $\KK_\fine\in\refine(\KK_\coarse)\cap\refine(\KK_\star)$  such that 
\begin{align*}
\#_\fine\NN_\fine \le \#_\coarse\NN_\coarse+\#_{\star}\NN_{\star}-\#_0\NN_0.
\end{align*}
\end{itemize}

\subsection{Interpolation theory}
\label{sec:interpolation}
We start with a maybe well-known abstract interpolation result (stated, e.g.,  in {\cite[Lemma~2]{hypsing3d}}), which will be applied in the following. 
\begin{lemma}\label{lem:discrete interpolation}
For $j=0,1$, let $H_j$ be Hilbert spaces with subspaces $X_j\subseteq H_j$, which satisfy the continuous inclusions $H_0\supseteq H_1$ and $X_0\supseteq X_1$.
Assume that $A:H_j\to X_j$ is a well-defined linear and continuous projection with operator norm $c_j=\norm{A:H_j\to X_j}{}$, for both $j=0,1$. 
Then, there holds equivalence of the interpolation norms
\begin{align}
\norm{v}{[H_0,H_1]_\sigma}\le \norm{v}{[X_0,X_1]_\sigma}\le c_0^{1-\sigma} c_1^\sigma \norm{v}{[H_0,H_1]_\sigma}\quad \text{for all } v\in [X_0,X_1]_\sigma
\end{align}
and all $0<\sigma<1$.\hfill$\square$
\end{lemma}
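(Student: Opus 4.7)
The plan is to work directly with the $K$-functional characterization of the Hilbert-space interpolation norm. Recall that, for an interpolation couple of Hilbert spaces $Y_0\supseteq Y_1$,
\begin{align*}
K(t,v;Y_0,Y_1)^2 := \inf\set{\|y_0\|_{Y_0}^2+t^2\|y_1\|_{Y_1}^2}{v=y_0+y_1,\ y_j\in Y_j},
\end{align*}
and $\|v\|_{[Y_0,Y_1]_\sigma}^2 = c_\sigma\int_0^\infty t^{-2\sigma}K(t,v;Y_0,Y_1)^2\,\frac{dt}{t}$ with a universal normalization constant $c_\sigma$. Since the upper and lower bounds in the lemma do not depend on this constant, I only have to compare the two $K$-functionals pointwise in $t$ and then integrate. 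The key ingredient is that, because $A$ is a projection onto $X_j$ for both $j=0,1$ and $X_0+X_1\subseteq H_0+H_1$ continuously, the identity $Av=v$ holds for every $v\in X_0+X_1$, hence in particular for every $v\in[X_0,X_1]_\sigma$.

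For the lower bound, I would simply observe that every admissible decomposition $v=x_0+x_1$ with $x_j\in X_j$ is also admissible in the defining infimum of $K(t,v;H_0,H_1)$ because of the continuous inclusions $X_j\subseteq H_j$ (and because the norms on $X_j$ and $H_j$ agree, as $X_j$ is a subspace). Taking infima thus yields $K(t,v;H_0,H_1)\le K(t,v;X_0,X_1)$ for every $t>0$, and integrating against $t^{-2\sigma}\,dt/t$ produces $\|v\|_{[H_0,H_1]_\sigma}\le\|v\|_{[X_0,X_1]_\sigma}$.

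For the upper bound, I would use $A$ to push any $H$-decomposition into an $X$-decomposition. Given $v=w_0+w_1$ with $w_j\in H_j$, applying $A$ and using $Av=v$ gives $v=Aw_0+Aw_1$ with $Aw_j\in X_j$ and $\|Aw_j\|_{X_j}\le c_j\|w_j\|_{H_j}$. Hence
\begin{align*}
K(t,v;X_0,X_1)^2\le c_0^2\|w_0\|_{H_0}^2+c_1^2 t^2\|w_1\|_{H_1}^2=c_0^2\big(\|w_0\|_{H_0}^2+(c_1 t/c_0)^2\|w_1\|_{H_1}^2\big),
\end{align*}
and the infimum over $H$-decompositions gives $K(t,v;X_0,X_1)\le c_0\,K(c_1 t/c_0,\,v;H_0,H_1)$. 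Finally, I would substitute $s=c_1 t/c_0$ in
\begin{align*}
\|v\|_{[X_0,X_1]_\sigma}^2=c_\sigma\int_0^\infty t^{-2\sigma}K(t,v;X_0,X_1)^2\,\frac{dt}{t},
\end{align*}
which contributes a factor $(c_0/c_1)^{-2\sigma}$ and combines with the prefactor $c_0^2$ to produce exactly $c_0^{2(1-\sigma)}c_1^{2\sigma}\|v\|_{[H_0,H_1]_\sigma}^2$. Taking square roots yields the claimed constant.

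There is essentially no obstacle: the argument reduces to the projection identity $Av=v$ on $X_0+X_1$ together with a homogeneous rescaling of the integration variable. The only point requiring mild care is to ensure that the infimum in the definition of $K(t,v;X_0,X_1)$ may indeed be taken (the element $Aw_0+Aw_1$ lies in $X_0+X_1$ and equals $v$), which is guaranteed precisely by the hypothesis that $A$ is a simultaneous projection onto both $X_0$ and $X_1$.
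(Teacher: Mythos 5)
Your proof is correct, and it is worth noting that the paper itself gives no proof of this lemma at all: it is stated with a closing $\square$ and attributed to the literature (\cite[Lemma~2]{hypsing3d}), so your argument fills in exactly what the paper leaves to the reference. The route you take — the (quadratic) $K$-functional characterization, the trivial inequality $K(t,v;H_0,H_1)\le K(t,v;X_0,X_1)$ for the lower bound, the projection identity $Av=v$ on $X_0+X_1=X_0$ combined with $\|Aw_j\|_{H_j}\le c_j\|w_j\|_{H_j}$ for the upper bound, and the rescaling $s=c_1t/c_0$ that converts the prefactor $c_0^2$ into $c_0^{2(1-\sigma)}c_1^{2\sigma}$ — is the standard proof of such results and yields precisely the sharp constant $c_0^{1-\sigma}c_1^\sigma$ claimed in the lemma; it is also consistent with the paper's convention, since the Sobolev spaces there are defined by the $K$-method of \cite[Appendix~B]{mclean}, whose quadratic $K$-functional differs from yours at most by a fixed normalization that cancels on both sides, as you observe. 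Two small points you implicitly use are worth making explicit: the lower bound needs that each $X_j$ carries the norm inherited from $H_j$ (which is the intended reading of ``subspace'' here, and is the situation in the paper's application to spline spaces), and the interpolation couple $(X_0,X_1)$ should consist of complete spaces, which is automatic because $X_j$ is the range of the bounded projection $A$ on the Hilbert space $H_j$ and hence closed. Neither point affects the validity of your argument.
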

In the following, we write $(X,\norm{\cdot}{X})\simeq (Y,\norm{\cdot}{Y})$ if $X=Y$ in the sense of sets with equivalent norms $\norm{\cdot}{X}\simeq\norm{\cdot}{Y}$. The Stein-Weiss interpolation theorem (see, e.g., \cite[Theorem 5.4.1]{bergh}) shows for fixed $\alpha\in\{0,1\}$ that for $\KK_\coarse\in\K$ there holds that
\begin{align}\label{eq:intp1}
(L^2(\Gamma),\norm{h_\coarse^{\alpha-\sigma}(\cdot)}{L^2(\Gamma)})\simeq [(L^2(\Gamma),\norm{h_\coarse^{\alpha}(\cdot)}{L^2(\Gamma)}),(L^2(\Gamma),\norm{h_\coarse^{\alpha-1}(\cdot)}{L^2(\Gamma)})]_\sigma,
\end{align}
where the hidden constants depend only on $\Gamma$ and $\sigma$.
Moreover, it holds by definition that
\begin{align}\label{eq:intp2}
(H^\sigma(\Gamma), \norm{\cdot}{H^\sigma(\Gamma)}) = [(L^2(\Gamma),\norm{\cdot}{L^2(\Gamma)}),(H^1(\Gamma), \norm{\cdot}{H^1(\Gamma)}]_\sigma.
\end{align}

\subsection{Scott--Zhang-type projection}
\label{sec:scott zhang}
\label{section:scott} 
\noindent
In this section, we  introduce a Scott--Zhang-type operator for $\KK_\coarse\in\K$.
In \cite[Section 2.1.5]{overview}, it is shown that, for $i\in \{1-p,\dots,N_\coarse-p\}$, there exist dual basis {functions $\widehat B_{\coarse,i,p}^*\in L^2(a,b)$ such that
\begin{align}
\supp \widehat B_{\coarse,i,p}^*&=\supp \widehat B_{\coarse,i,p}=[t_{\coarse,i-1},t_{\coarse,i+p}],\\
\int_a^b  \widehat B_{\coarse,i,p}^* \widehat B_{\coarse,j,p} \,dt&=\delta_{ij}=\begin{cases} 1,  \text{ if }i=j,\\ 0,\text{ else,}\end{cases}\\
\label{eq:dual inequality}
\norm{\widehat B_{\coarse,i,p}^*}{L^2([a,b])}&\le 9^p(2p+3) {|\supp \widehat B_{\coarse,i,p}|^{-1/2}}.
\end{align}
Each dual basis function depends only on the knots $t_{\coarse,i-1},\dots,t_{\coarse,i+p}$. 
With the denominator $\widehat w$ from \eqref{eq:w},  define  
\begin{align}\label{eq:ripstar}
\widehat R_{\coarse,i,p}^*:=\widehat B_{\coarse,i,p}^* \widehat w/w_{\coarse,i}.
\end{align}
This immediately proves that 
\begin{align}
\label{eq:duality}&\int_a^b  \widehat R_{\coarse,i,p}^* (R_{\coarse,j,p}\circ\gamma) dt=\delta_{ij}=\begin{cases} 1,\quad \text{if }i=j,\\ 0,\quad \text{else,}\end{cases}
\end{align}
and
\begin{align}\label{eq:dual inequality2}
\norm{\widehat R_{\coarse,i,p}^*}{L^2(a,b)}\lesssim 9^p(2p+3) {|\supp R_{\coarse,i,p}|^{-1/2}},
\end{align}
where the hidden constant depends only on $\gamma, w_{\min}$, and $w_{\max}$. 
With the abbreviation $N_\coarse:=\#_\coarse \NN_\coarse$, we define the Scott--Zhang-type operator $J_\coarse:L^2(\Gamma)\to\XX_\coarse$ by
\begin{subequations}\label{eq:scotty}
\begin{align}
\begin{split}
J_\coarse v&:=
\Big(\int_a^b  \frac{\widehat R_{\coarse,1-p,p}^*+\widehat R_{\coarse,N_\coarse-p,p}^*}{2}( v\circ\gamma) \,dt \Big)(R_{\coarse,1-p,p}+R_{\coarse,N_\coarse-p,p})\\
&\quad+\sum_{i=1-p}^{N_\coarse-p-1} \Big(\int_a^b \widehat R_{\coarse,i,p}^* (v \circ\gamma) \,dt \Big)R_{\coarse,i,p}.
\end{split}
\end{align}
\end{subequations}
A similar operator, namely $I_\coarse := \sum_{i=1-p}^{N_\coarse-p} \Big(\int_a^b  \widehat R_{\coarse,i,p}^* (v\circ \gamma) \,dt \Big)R_{\coarse,i,p}$, has been analyzed in \cite[Section 3.1.2]{overview}.
However, $I_\coarse$ is not applicable here because it does not guarantee that  $I_\coarse v$ is continuous at $\gamma(a)=\gamma(b)$.

\begin{proposition}\label{lem:Scott properties}
Given $\KK_\coarse\in \K$, the operator $J_\coarse$ from \eqref{eq:scotty} satisfies the following properties~{\rm (i)--(iv)} with a constant $\Cscott>0$ depending only on $\gamma,p, \widehat \QQ_0, w_{\min}, w_{\max}$, and  $\sigma$:
\begin{enumerate}[{\rm(i)}]
\item Local projection property: For  all $v\in L^2(\Gamma)$ and all $Q\in\QQ_\coarse$, it holds that 
\begin{align}\label{eq:local projection}
(J_\coarse v)|_Q= v|_Q \quad\text{if }v|_{\pi_\coarse^p(Q)}\in \XX_\coarse|_{\pi_\coarse^p(Q)}=\set{V_\coarse|_{\pi_\coarse^p(Q)}}{V_\coarse\in\XX_\coarse}.
\end{align}
\item Local $L^2$-stability: For all $v\in L^2(\Gamma)$ and all $Q\in \QQ_\coarse$, it holds that 
\begin{align}\label{eq:local L2}
\norm{J_\coarse v}{L^2(Q)}\le \Cscott \norm{v}{L^2(\pi_\coarse^{p}(Q))}.
\end{align}
\item Local $H^1$-stability: For all $v\in H^1(\Gamma)$ and all $Q\in \QQ_\coarse$, it holds that 
\begin{align}\label{eq:local H1}
|J_\coarse v|_{H^1(Q)}\le \Cscott |v|_{H^1(\pi_\coarse^{p}(Q))}.
\end{align}
\item Approximation properties: For all $0\le\sigma\le1$ and all $v\in H^\sigma(\Gamma)$, it holds that
\begin{align}\label{eq:local approx}
\norm{h_\coarse^{-\sigma}(1-J_\coarse) v}{L^2(\Gamma)}\le \Cscott \norm{v}{H^\sigma(\Gamma)}
\end{align}
as well as
\begin{align}\label{eq:local approx2}
\norm{(1-J_\coarse)v}{H^{\sigma}(\Gamma)} \le \Cscott \norm{h_\coarse^{1-\sigma}\partial_\Gamma v}{L^2(\Gamma).}
\end{align}
 
\end{enumerate}
\end{proposition}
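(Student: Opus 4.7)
The plan is to verify the four properties in turn, the first being the key algebraic fact and the others following by standard Scott--Zhang-type arguments adapted to the NURBS setting.

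For the local projection property (i), I would exploit the biorthogonality \eqref{eq:duality}. Given $Q \in \QQ_\coarse$, only those basis functions $R_{\coarse,i,p}$ whose support intersects $Q$ contribute to $(J_\coarse v)|_Q$, and by Lemma~\ref{lem:properties for B-splines}(ii), these are finitely many indices $i$ whose supports lie within $\pi_\coarse^p(Q)$. If $v|_{\pi_\coarse^p(Q)} = \sum_i c_i R_{\coarse,i,p}|_{\pi_\coarse^p(Q)}$ (with the endpoint coefficients tied by the boundary identification), the dual pairing with $\widehat{R}_{\coarse,i,p}^*$ recovers $c_i$ exactly for interior indices. The endpoint case requires care because the basis on $\XX_\coarse$ uses the combined function $R_{\coarse,1-p,p}+R_{\coarse,N_\coarse-p,p}$: the symmetric $\tfrac12(\widehat{R}_{\coarse,1-p,p}^*+\widehat{R}_{\coarse,N_\coarse-p,p}^*)$ in the definition \eqref{eq:scotty} is precisely engineered to reproduce the common endpoint coefficient when $v|_Q$ coincides with an element of $\XX_\coarse|_{\pi_\coarse^p(Q)}$.

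For $L^2$- and $H^1$-stability, I would proceed element by element. For (ii), the Cauchy--Schwarz inequality combined with the dual bound \eqref{eq:dual inequality2}, together with the fact that only $\OO(p)$ basis functions are active on $Q$ and that $\|R_{\coarse,i,p}\|_{L^\infty(Q)} \le 1$, yields $\|J_\coarse v\|_{L^2(Q)} \lesssim \|v\|_{L^2(\pi_\coarse^p(Q))}$; here the bounded mesh ratio \eqref{eq:two kappa} ensures $|\supp R_{\coarse,i,p}| \simeq h_Q$ for all active $i$. For (iii), since partition of unity (Lemma~\ref{lem:properties for B-splines}(iv)) shows that constants belong to $\XX_\coarse$, property (i) implies that $J_\coarse$ reproduces constants. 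Hence for any constant $c$, using an inverse estimate for NURBS on $Q$ (e.g.\ from Section~\ref{sec:inverse inequalities}) combined with (ii) and a Poincar\'e estimate on $\pi_\coarse^p(Q)$,
\begin{align*}
|J_\coarse v|_{H^1(Q)} = |J_\coarse(v-c)|_{H^1(Q)} \lesssim h_Q^{-1}\|v-c\|_{L^2(\pi_\coarse^p(Q))} \lesssim |v|_{H^1(\pi_\coarse^p(Q))},
\end{align*}
where $c$ is chosen as the mean of $v$ over $\pi_\coarse^p(Q)$ and shape regularity enters the Poincar\'e constant.

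For the approximation estimates (iv), I would argue by interpolation. For $\sigma=0$ in \eqref{eq:local approx}, stability (ii) is immediate. For $\sigma=1$, constant reproduction gives $(1-J_\coarse)v|_Q = (1-J_\coarse)(v-c)|_Q$, so
\begin{align*}
\|h_\coarse^{-1}(1-J_\coarse)v\|_{L^2(Q)} \lesssim h_Q^{-1}\|v-c\|_{L^2(\pi_\coarse^p(Q))} \lesssim |v|_{H^1(\pi_\coarse^p(Q))}.
\end{align*}
Summing over $Q$ and invoking finite overlap of the patches $\pi_\coarse^p(Q)$ yields \eqref{eq:local approx} for $\sigma=1$; the intermediate range follows from Lemma~\ref{lem:discrete interpolation} together with the interpolation identities \eqref{eq:intp1}--\eqref{eq:intp2}, applied to the projection $1-J_\coarse$ viewed between weighted $L^2$-spaces and $H^\sigma(\Gamma)$. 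The companion estimate \eqref{eq:local approx2} follows from the same ingredients, combining (iii) to bound $|(1-J_\coarse)v|_{H^1(Q)}$ by $|v|_{H^1(\pi_\coarse^p(Q))}$ with the $L^2$-approximation estimate just derived, and interpolating.

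The main obstacle I foresee is the careful treatment of the glued endpoint basis function in (i). The asymmetry between the dual set $\{\widehat{R}_{\coarse,i,p}^*\}$ and the basis \eqref{eq:hypsing basis} of $\XX_\coarse$ means that a naive use of biorthogonality fails at the seam $\gamma(a)=\gamma(b)$; one has to verify that the averaging $\tfrac12(\widehat{R}_{\coarse,1-p,p}^* + \widehat{R}_{\coarse,N_\coarse-p,p}^*)$ really reproduces the single endpoint coefficient on both adjacent elements, which amounts to checking that when $v|_{\pi_\coarse^p(Q)} \in \XX_\coarse|_{\pi_\coarse^p(Q)}$ the two integrals $\int \widehat{R}_{\coarse,1-p,p}^*(v\circ\gamma)$ and $\int \widehat{R}_{\coarse,N_\coarse-p,p}^*(v\circ\gamma)$ agree. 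A secondary technical point will be tracking the dependence of $\Cscott$ only on the stated quantities, in particular on $w_{\min},w_{\max}$ through the bound \eqref{eq:dual inequality2} and on $\widehat\QQ_0$ through the mesh-ratio constant \eqref{eq:two kappa}.
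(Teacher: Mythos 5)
Your treatment of (i)--(iii) and of the first approximation estimate \eqref{eq:local approx} is sound and essentially parallels the paper: (i) is exactly the biorthogonality-plus-seam argument (the paper dispatches it in one line), (ii) is the same Cauchy--Schwarz/dual-bound computation, and for (iii) you replace the paper's direct bound $|R_{\coarse,i,p}|_{H^1(\Gamma)}\lesssim|\supp R_{\coarse,i,p}|^{-1/2}$ (via the B-spline derivative formula and the quotient rule) by an elementwise inverse estimate plus (ii) plus Poincar\'e; that works, but be careful to use only the elementwise $\sigma\in\{0,1\}$ inverse estimates, which are proved independently in Step~1 of the paper's Proposition on inverse inequalities --- the fractional cases of that proposition are themselves proved \emph{using} the present Scott--Zhang operator, so citing them here would be circular. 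The interpolation argument for \eqref{eq:local approx} (endpoints $\sigma=0,1$, then Stein--Weiss weighted interpolation of the target spaces) is exactly the paper's route.

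The genuine gap is the final estimate \eqref{eq:local approx2}, which you claim follows by ``combining (iii) with the $L^2$-approximation estimate and interpolating.'' Interpolation does not straightforwardly deliver this bound: the right-hand side $\norm{h_\coarse^{1-\sigma}\partial_\Gamma v}{L^2(\Gamma)}$ is a \emph{mesh-weighted pure seminorm}, so to interpolate the operator $1-J_\coarse$ you would have to identify the interpolation space of the couple $\big(H^1(\Gamma),\norm{h_\coarse\partial_\Gamma\cdot}{L^2(\Gamma)}\big)$, $\big(H^1(\Gamma),\norm{\partial_\Gamma\cdot}{L^2(\Gamma)}\big)$ (equivalently, of the zero-mean derivative subspace of the weighted $L^2$ couple after quotienting out constants) with $\norm{h_\coarse^{1-\sigma}\partial_\Gamma\cdot}{L^2(\Gamma)}$, \emph{with constants independent of the mesh}. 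The Stein--Weiss identity \eqref{eq:intp1} applies to the full weighted $L^2$ couple, not to this constrained/quotient couple, and the natural retraction (subtracting a mean) is not uniformly bounded in the $h_\coarse$-weighted norm, so the needed uniform identification is not available; Lemma~\ref{lem:discrete interpolation} does not apply either, since there is no common bounded projection here. This is precisely why the paper does not interpolate at this point: it invokes the localization of the fractional Sobolev norm, $\norm{\widetilde v}{H^\sigma(\Gamma)}\lesssim\norm{h_\coarse^{-\sigma}\widetilde v}{L^2(\Gamma)}+\norm{h_\coarse^{1-\sigma}\partial_\Gamma\widetilde v}{L^2(\Gamma)}$ (from \cite[Lemma~2.3]{faermann2d} combined with \cite[Lemma~4.5]{resigabem}, extended to $0<\sigma<1$), applies it to $\widetilde v=(1-J_\coarse)v$, and then concludes with the elementwise bounds \eqref{eq:aux local approx2} and \eqref{eq:local H1}. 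Without this (or an equivalent argument handling the non-locality of the $H^\sigma(\Gamma)$-norm), \eqref{eq:local approx2} remains unproved in your proposal.
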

\begin{proof}

\noindent
\textbf{Proof of{\rm \textbf{(i)}}:}
The proof follows immediately from \eqref{eq:duality} and the fact that 
\eqref{eq:hypsing basis} forms a basis of $\XX_\coarse$.

\noindent
\textbf{Proof of {\rm \textbf{(ii)}}:}
Abbreviate $\overline R_{\coarse,1-p,p}:=R_{\coarse,1-p,p}+R_{\coarse,N_\coarse-p,p}$.
Because of \eqref{eq:dual inequality} and $\supp R_{\coarse,1-p,p}\cap\supp R_{\coarse,N_\coarse-p,p}=\emptyset$, it holds  that 
\begin{align*}
\norm{J_\coarse v}{L^2(Q)}&\stackrel{\eqref{eq:scotty}}{=}\Big\|\Big(\int_a^b  \frac{\widehat R_{\coarse,1-p,p}^*+\widehat R_{\coarse,N_\coarse-p,p}^*}{2} v\circ\gamma \,dt \Big)\overline R_{\coarse,1-p,p}\\
&\hspace{5cm}+\sum_{i=1-p}^{N_\coarse-p-1} \Big(\int_a^b \widehat R_{\coarse,i,p}^* v\circ\gamma \,dt \Big)R_{\coarse,i,p}\Big\|_{L^2(Q)}\\
&\,\,\le\,\, \int_a^b \Big| \frac{\widehat R_{\coarse,1-p,p}^*+\widehat R_{\coarse,N_\coarse-p,p}^*}{2} v\circ\gamma\Big| \,dt \,|Q\cap \supp\overline R_{\coarse,1-p,p}|^{1/2}\\
&\quad+\sum_{i=1-p\atop Q\subseteq \supp R_{\coarse,i,p}}^{N_\coarse-p-1} \int_a^b |\widehat R_{\coarse,i,p}^* v\circ\gamma |\,dt \,|Q|^{1/2}\\
&\stackrel{\eqref{eq:dual inequality}}{\lesssim}|\supp\overline R_{\coarse,1-p,p}|^{-1/2}\norm{v\circ\gamma}{L^2(\supp \widehat R_{\coarse,1-p,p}\cup\supp  \widehat R_{\coarse,N_\coarse-p,p})}\,|Q\cap\supp \overline R_{\coarse,1-p,p}|^{1/2}\\
&\quad+ \sum_{i=1-p\atop Q\subseteq \supp R_{\coarse,i,p}}^{N_\coarse-p-1} |\supp {R}_{\coarse,i,p}|^{-1/2}\norm{v\circ\gamma}{L^2(\supp  \widehat R_{\coarse,i,p})}|Q|^{1/2}\lesssim \norm{ v}{L^2(\pi_\coarse^{p}(Q))}.
\end{align*}

\noindent

\noindent
\textbf{Proof of {\rm \textbf{(iii)}}:}
We show that $|(1-J_\coarse)v|_{H^1(Q)}\lesssim |v|_{H^1(\pi_\coarse^p(Q))}$. 
With Lemma~\ref{lem:properties for B-splines} \eqref{item:derivative of splines},  for $i=1-p,\dots,N_\coarse-p$, it holds that 
\begin{align*}
|R_{\coarse,i,p}|_{H^1(\Gamma)}&\simeq |\widehat R_{\coarse,i,p}|_{H^1(a,b)}=\norm{(w_{\coarse,i}\widehat B_{\coarse,i,p}/\widehat w)'}{L^2(a,b)}\simeq\Big\|\frac{\widehat B_{\coarse,i,p}'\widehat w-\widehat B_{\coarse,i,p}\widehat w'}{\widehat w^2}\Big\|_{L^2(a,b)}\\
&\lesssim\norm{\widehat B_{\coarse,i,p}'}{L^2(a,b)}+\norm{\widehat w'}{L^2(\supp \widehat B_{\coarse,i,p})}\stackrel{\eqref{item:derivative of splines}}\lesssim|\supp \widehat B_{\coarse,i,p}|^{-1/2}\simeq |\supp {R}_{\coarse,i,p}|^{-1/2}.
\end{align*}
With \eqref{eq:dual inequality}, we   see for $\widetilde v\in H^1(\Gamma)$ that
\begin{align}\label{eqpr:one minus J}
\begin{split}
|J_\coarse \widetilde{v}|_{H^1(Q)}&\stackrel{\eqref{eq:dual inequality}}\lesssim |\supp \overline R_{\coarse,1-p,p}|^{-1/2} \norm{\widetilde{v}\circ\gamma}{L^2(\supp \widehat R_{\coarse,1-p,p}\cup \supp \widehat R_{\coarse,1-p,p})} |\supp\overline R_{\coarse,1-p,p}|_{H^1(Q)}\\
&\quad +\sum_{i=1-p}^{N_\coarse-p-1}|\supp \widehat R_{\coarse,i,p}|^{-1/2} \norm{\widetilde{v}\circ\gamma}{L^2(\supp \widehat R_{\coarse,i,p})} |R_{\coarse,i,p}|_{H^1(Q)}\\
&\,\,\lesssim\,\,h_{\coarse,Q}^{-1/2} \norm{\widetilde{v}}{L^2(\pi_\coarse^p(Q))}h_{\coarse,Q}^{-1/2}+\sum_{i=1-p\atop |Q\cap \supp R_{\coarse,i,p}|>0}^{N_\coarse-p-1}h_{\coarse,Q}^{-1/2}\norm{\widetilde{v}}{L^2(\pi_\coarse^p(Q))} h_{\coarse,Q}^{-1/2}\\
&\,\,\lesssim\,\,\norm{h_\coarse^{-1}\widetilde{v}}{L^2(\pi_\coarse^p(Q))}.
\end{split}
\end{align}
For $v\in H^1(\Gamma)$, let  $\overline{v}:=\int_{\pi_\coarse^p(Q)} v \,dx/|\pi_\coarse^p(Q)| $ be the integral mean of $v$ over $\pi_\coarse^p(Q)$. 
Choosing $\widetilde v:=v-\overline v$ in \eqref{eqpr:one minus J} and using the Poincar\'e inequality (see, e.g., \cite[Lemma 2.5]{faermann2d}), we conclude that 
\begin{align*}
|(1-J_\coarse)v|_{H^1(Q)}&\,\,\,\,=\,\,\,\,|(1-J_\coarse)(v-\overline v)|_{H^1(Q)}\\
&\stackrel{\eqref{eqpr:one minus J}}\lesssim|v-\overline v|_{H^1(Q)}+\norm{h_\coarse^{-1}(v-\overline v)}{L^2(\pi_\coarse^p(Q))}\lesssim |v|_{H^1(\pi_\coarse^p(Q))}.
\end{align*}


\noindent
\textbf{Proof of {\rm \textbf{(iv)}}:}
First, we prove \eqref{eq:local approx}.
With {\rm (ii)},  it holds  that 
\begin{align*}
\norm{(1-J_\coarse)\widetilde v}{L^2(Q)}\lesssim \norm{\widetilde v}{L^2(\pi_\coarse^p(Q))}\quad \text{for all }\widetilde v\in L^2(\Gamma).
\end{align*}
By taking the square and summing over all elements, this already proves the assertion for $\sigma=0$.
Now, we prove it for $\sigma=1$ by showing  that 
\begin{align}\label{eq:aux local approx2}
\norm{h_\coarse^{-1}(1-J_\coarse)v}{L^2(Q)}\lesssim |{v}|_{H^1(\pi_\coarse^p(Q))}\quad \text{for all } v\in L^2(\Gamma).
\end{align}
We choose $\widetilde v:=v-\overline v$ with $\overline v:=\int_{\pi_\coarse^p(Q)} v \,dx/|\pi_\coarse^p(Q)|$ and apply the Poincar\'e  inequality.
Note, that \eqref{eq:local approx} for arbitrary $\sigma$ is equivalent to the boundedness of 
\begin{align*}
1-J_\coarse:(H^\sigma(\Gamma),\norm{\cdot}{H^\sigma}(\Gamma))\to (L^2(\Gamma),\norm{h_\coarse^{-\sigma}(\cdot)}{L^2(\Gamma)}),
\end{align*}
which follows with \eqref{eq:intp1} and \eqref{eq:intp2} from the interpolation theorem
\cite[Theorem~B.2]{mclean}.

Next, we prove \eqref{eq:local approx}. 
The localization argument \cite[Lemma~2.3]{faermann2d} in combination with \cite[Lemma~4.5]{resigabem} proves  that 
$\norm{\widetilde v}{H^{1/2}(\Gamma)}
\lesssim 
\norm{h_\coarse^{-1/2}\widetilde v}{L^2(\Gamma)}
+\norm{h_\coarse^{1-1/2}\partial_\Gamma \widetilde v}{L^2(\Gamma)}$ for all $\widetilde v\in\widetilde H^{1/2}(\Gamma)$.
The proofs extend verbatim to $0<\sigma<1$ and the assertion is trivially satisfied for $\sigma\in\{0,1\}$. 
With $\widetilde v:=(1-J_\coarse)v$, the latter inequality (for $0\le \sigma\le1$), \eqref{eq:aux local approx2}, and \eqref{eq:local H1} show that 
\begin{eqnarray*}
\norm{(1-J_\coarse)v}{H^\sigma(\Gamma)}
&\stackrel{\text{\cite{faermann2d,resigabem}}}\lesssim &
\norm{h_\coarse^{-\sigma}(1-J_\coarse)v}{L^2(\Gamma)}
+\norm{h_\coarse^{1-\sigma}\partial_\Gamma (1-J_\coarse)v}{L^2(\Gamma)}
\\
&\stackrel{\eqref{eq:aux local approx2}+\eqref{eq:local H1}}\lesssim&
 \norm{h_\coarse^{1-\sigma}\partial_\Gamma v}{L^2(\Gamma)}.
\end{eqnarray*}
This concludes the proof.

\end{proof}

\subsection{Inverse inequalities}
\label{sec:inverse inequalities}
\label{sec:inverse inequalities}
The first result is taken from \cite[Theorem~3.1]{invest}.
\begin{proposition}
Let $\KK_\coarse\in\K$.
Then, there exists a constant $\Cinv>0$ such that 
\begin{align}\label{eq:inverse W}
\norm{h_\coarse^{1/2}\mathfrak{W} v}{L^2(\Gamma)}\le \Cinv \big(\norm{v}{H^{1/2}(\Gamma)} + \norm{h_\coarse^{1/2}\partial_\Gamma v}{L^2(\Gamma)}\big)\quad \text{for all }v\in H^1(\Gamma)
\end{align}
and 
\begin{align}\label{eq:inverse Kpr}
\norm{h_\coarse^{1/2} \mathfrak{K}'\psi}{L^2(\Gamma)}\le \Cinv \big(\norm{\psi}{H^{-1/2}(\Gamma)} + \norm{h_\coarse^{1/2}\psi}{L^2(\Gamma)}\big)\quad\text{for all }\psi\in L^2(\Gamma).
\end{align}
The constant $\Cinv>0$ depends only on $\gamma$ and $\widehat\QQ_0$.
\hfill$\square$
\end{proposition}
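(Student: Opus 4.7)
The plan is to derive both inequalities as consequences of the general inverse inequalities for boundary integral operators from \cite[Theorem~3.1]{invest}, which are formulated in the standard BEM setting but whose hypotheses are satisfied in the present NURBS-parametrized situation. The first estimate \eqref{eq:inverse W} is essentially a direct citation: the authors of \cite{invest} establish the inverse inequality $\norm{h^{1/2}\mathfrak{W}v}{L^2(\Gamma)}\lesssim \norm{v}{H^{1/2}(\Gamma)}+\norm{h^{1/2}\partial_\Gamma v}{L^2(\Gamma)}$ on arbitrary partitions of a Lipschitz boundary provided the local mesh-ratio is uniformly bounded. The property \eqref{eq:two kappa} from Section~\ref{section:mesh-refinement}, together with the bi-Lipschitz equivalence between $h_Q$ and $\widehat h_Q$ coming from the assumptions on $\gamma$ in Section~\ref{subsec:boundary parametrization}, is exactly what is required. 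I would therefore simply verify that these hypotheses are met and invoke the result.

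For the second estimate \eqref{eq:inverse Kpr}, the same strategy applies. The key structural ingredient in \cite{invest} is a local splitting of the test function combined with (i) continuity of the integral operator on the natural Sobolev scale, (ii) a duality/interpolation argument to produce the correct mesh-weights, and (iii) a Scott--Zhang-type quasi-interpolation onto discrete ansatz functions. Since $\mathfrak{K}':H^{\sigma-1}(\Gamma)\to H^\sigma(\Gamma)$ is continuous on the full scale $0\le\sigma\le 1$ (Section~\ref{subsec:hypsing}), the abstract argument applies verbatim to $\mathfrak{K}'$ in place of $\mathfrak{W}$.

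Concretely, I would proceed by a duality argument: estimate
\[
\norm{h_\coarse^{1/2}\mathfrak{K}'\psi}{L^2(\Gamma)}=\sup_{\norm{\zeta}{L^2(\Gamma)}=1}\dual{\mathfrak{K}'\psi}{h_\coarse^{1/2}\zeta}_\Gamma=\sup_{\norm{\zeta}{L^2(\Gamma)}=1}\dual{\psi}{\mathfrak{K}(h_\coarse^{1/2}\zeta)}_\Gamma,
\]
split $h_\coarse^{1/2}\zeta = J_\coarse(h_\coarse^{1/2}\zeta)+(1-J_\coarse)(h_\coarse^{1/2}\zeta)$ via the Scott--Zhang operator of Section~\ref{sec:scott zhang} (applied to an appropriate ansatz space for the double-layer operator $\mathfrak{K}$), and bound the two contributions separately: the projected part using continuity of $\mathfrak{K}:H^{1/2}(\Gamma)\to H^{1/2}(\Gamma)$ and $H^{1/2}$-stability of $J_\coarse$ (giving the $\norm{\psi}{H^{-1/2}(\Gamma)}$ term), the remainder via the approximation estimate \eqref{eq:local approx} combined with $L^2$-continuity of $\mathfrak{K}$ (giving the $\norm{h_\coarse^{1/2}\psi}{L^2(\Gamma)}$ term after absorbing one factor of $h_\coarse^{1/2}$).

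The main obstacle is ensuring that the $H^{1/2}$-stability of the Scott--Zhang projection and the $H^{1/2}$-continuity of $\mathfrak{K}$ combine with the correct mesh-weight scaling, so that the resulting constant depends only on $\gamma$ and $\widehat\QQ_0$ (and not on $\KK_\coarse$). This is precisely where the local mesh-ratio bound \eqref{eq:two kappa} enters, as it permits Poincaré-type scaling of $(1-J_\coarse)$ with local mesh-widths that are uniformly comparable on neighboring elements. Once this is carried out for $\mathfrak{K}'$, both inequalities follow with the claimed constant dependence.
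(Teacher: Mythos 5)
The paper offers no proof of this proposition at all: both \eqref{eq:inverse W} and \eqref{eq:inverse Kpr} are quoted verbatim from \cite[Theorem~3.1]{invest}, which states these weighted inverse estimates for all four layer operators $\mathfrak{V},\mathfrak{K},\mathfrak{K}',\mathfrak{W}$ at once; the only thing to observe is that $h_\coarse\simeq\widehat h_\coarse$ via the assumptions on $\gamma$ (and, if one wants to track the mesh-ratio, \eqref{eq:two kappa}), so that the constant depends only on $\gamma$ and $\widehat\QQ_0$. Your first paragraph is therefore exactly the paper's argument, and the same citation already disposes of \eqref{eq:inverse Kpr} — no separate proof for $\mathfrak{K}'$ is needed.

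The concrete duality argument you then propose for \eqref{eq:inverse Kpr} has a genuine gap. First, a repairable issue: $h_\coarse^{1/2}\zeta$ is only in $L^2(\Gamma)$, so ``$H^{1/2}$-stability of $J_\coarse$'' cannot be applied to it; one must instead combine the local $L^2$-stability, local quasi-uniformity \eqref{eq:two kappa} and the spline inverse estimate \eqref{eq:discrete invest2} to get $\norm{J_\coarse(h_\coarse^{1/2}\zeta)}{H^{1/2}(\Gamma)}\lesssim\norm{\zeta}{L^2(\Gamma)}$. The real obstruction is the remainder term: to produce the weighted norm $\norm{h_\coarse^{1/2}\psi}{L^2(\Gamma)}$ from $\dual{\psi}{\mathfrak{K}(1-J_\coarse)(h_\coarse^{1/2}\zeta)}_\Gamma$ you must bound $\norm{h_\coarse^{-1/2}\mathfrak{K}(1-J_\coarse)(h_\coarse^{1/2}\zeta)}{L^2(\Gamma)}$ uniformly, and neither $L^2$-continuity of $\mathfrak{K}$ nor the approximation property \eqref{eq:local approx} gives this, because the mesh-size weight does not commute with the nonlocal operator $\mathfrak{K}$; estimating instead by $\norm{\psi}{L^2(\Gamma)}\norm{(1-J_\coarse)(h_\coarse^{1/2}\zeta)}{L^2(\Gamma)}$ loses the local weight on $\psi$ and does not yield \eqref{eq:inverse Kpr} on locally refined meshes. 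Note also that \eqref{eq:inverse Kpr} concerns arbitrary $\psi\in L^2(\Gamma)$, so no discrete structure is available to exploit: the weighted mapping property you would need is precisely the nontrivial content of \cite{invest}, proved there by localization and interior (Caccioppoli-type) estimates for the layer potentials rather than by quasi-interpolation. The efficient route is the one the paper takes: cite \cite[Theorem~3.1]{invest} for both inequalities.
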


The next proposition provides inverse inequalities for rational splines, which are well-known for piecewise polynomials; see \cite{inverse2,hypsing3d}.
It also recalls a standard inverse inequality for piecewise polynomials.


\begin{proposition}\label{lem:discrete invest2}
Let $\KK_\coarse\in\K$ and $0\le\sigma\le 1$.
Then, there exists $\widetilde C_{\rm inv}>0$ such that 
\begin{align}\label{eq:discrete invest}
\norm{h_\coarse^{1-\sigma}\partial_\Gamma{V_\coarse}}{L^2(\Gamma)}\le \widetilde C_{\rm inv} \norm{V_\coarse}{H^\sigma(\Gamma)} \quad\text{for all } V_\coarse\in \XX_\coarse,
\end{align}
and
\begin{align}\label{eq:discrete invest old}
\norm{h_\coarse^{\sigma}\Psi_\coarse}{L^2(\Gamma)}\le\widetilde C_{\rm inv}\norm{\Psi_\coarse}{H^{-\sigma}(\Gamma)}\quad\text{for all }\Psi_\coarse\in\PP^p(\QQ_\coarse),
\end{align}
and
\begin{align}\label{eq:discrete invest2}
\norm{V_\coarse}{H^\sigma(\Gamma)}\le \widetilde C_{\rm inv} \norm{h_\coarse^{-\sigma}V_\coarse}{L^2(\Gamma)} \quad\text{for all } V_\coarse\in \XX_\coarse.
\end{align}
The constant $\widetilde C_{\rm inv}>0$ depends only on $\gamma, p, \widehat\QQ_0, w_{\min}, w_{\max}$, and $\sigma$.
\end{proposition}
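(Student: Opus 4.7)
The plan is to establish the three inverse inequalities by first verifying the endpoint cases $\sigma=0$ and $\sigma=1$, then interpolating via Lemma~\ref{lem:discrete interpolation} together with the Stein--Weiss identity \eqref{eq:intp1}.

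For \eqref{eq:discrete invest} at $\sigma=1$, the bound $\norm{\partial_\Gamma V_\coarse}{L^2(\Gamma)}\le\norm{V_\coarse}{H^1(\Gamma)}$ is immediate. At $\sigma=0$, I would argue elementwise: writing $V_\coarse|_Q=(\widehat P/\widehat W_\coarse)\circ\gamma^{-1}$ with $\widehat P$ a polynomial of degree $\le p$ on $\widehat Q$ and $\widehat W_\coarse$ the NURBS denominator, the uniform bounds \eqref{eq:weights} yield $w_{\min}\le \widehat W_\coarse\le w_{\max}$, so the classical polynomial inverse estimate on $\widehat Q$ applied to $\widehat P$ and $\widehat W_\coarse$, combined with the quotient rule and the regularity of $\gamma$, gives $\norm{h_\coarse\partial_\Gamma V_\coarse}{L^2(Q)}\lesssim\norm{V_\coarse}{L^2(Q)}$ with constants depending only on the admitted parameters. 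Summing over $Q\in\QQ_\coarse$ yields the $\sigma=0$ endpoint. The same device proves \eqref{eq:discrete invest2} at $\sigma=1$, while the $\sigma=0$ case is trivial. For \eqref{eq:discrete invest old}, the $\sigma=0$ case is an equality; the $\sigma=1$ inequality $\norm{h_\coarse\Psi_\coarse}{L^2(\Gamma)}\lesssim\norm{\Psi_\coarse}{H^{-1}(\Gamma)}$ for discontinuous piecewise polynomials is a classical inverse estimate proved by duality in \cite{invest}.

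The fractional cases $0<\sigma<1$ then follow from interpolation. For \eqref{eq:discrete invest} and \eqref{eq:discrete invest2}, I apply Lemma~\ref{lem:discrete interpolation} with $H_0=L^2(\Gamma)$, $H_1=H^1(\Gamma)$, and $X_j=\XX_\coarse$ equipped with the inherited norms. The Scott--Zhang operator $J_\coarse$ from Proposition~\ref{lem:Scott properties} is the required uniformly bounded projection onto $\XX_\coarse$: the local projection property together with the $L^2$- and $H^1$-stability supplies all hypotheses of Lemma~\ref{lem:discrete interpolation}, which then identifies the interpolation norm on $\XX_\coarse$ with the restriction of $\norm{\cdot}{H^\sigma(\Gamma)}$. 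Combined with the Stein--Weiss identity \eqref{eq:intp1} on the codomain side, interpolation of the two endpoint operator bounds yields the desired estimates for $0<\sigma<1$. For \eqref{eq:discrete invest old} an analogous interpolation applies with $\XX_\coarse$ replaced by $\PP^p(\QQ_\coarse)$: the $L^2$-orthogonal projection onto $\PP^p(\QQ_\coarse)$ is trivially $L^2$-bounded, and its boundedness on $H^{-1}$ follows by duality from a Scott--Zhang-type operator $H^1\to\PP^p(\QQ_\coarse)$ whose construction is simpler than that of $J_\coarse$ since continuity across elements is no longer required.

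The step I expect to be the main obstacle is the $\sigma=1$ endpoint of \eqref{eq:discrete invest old}: the duality-based inverse estimate for discontinuous piecewise polynomials is non-trivial because naive test functions of the form $h_\coarse^2\Psi_\coarse$ do not lie in $H^1(\Gamma)$, so a genuine smoothing of the test function is needed. Fortunately this bound is already a standard result of \cite{invest}. The remaining subtlety, namely the rational character of the NURBS ansatz in \eqref{eq:discrete invest} and \eqref{eq:discrete invest2}, is routine provided one carefully tracks the dependence on $w_{\min}$ and $w_{\max}$ when applying the quotient rule on each element, and this only affects the multiplicative constants.
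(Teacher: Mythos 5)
For \eqref{eq:discrete invest} and \eqref{eq:discrete invest2} your plan coincides with the paper's proof: endpoint cases $\sigma\in\{0,1\}$ by the elementwise quotient-rule argument for rational functions (tracking $w_{\min},w_{\max}$), then identification of the discrete interpolation spaces via Lemma~\ref{lem:discrete interpolation} with $J_\coarse$ as the uniformly stable projection, combined with the Stein--Weiss identity \eqref{eq:intp1}, and finally interpolation of the endpoint operator bounds. One small imprecision: for \eqref{eq:discrete invest2} the weight sits on the \emph{domain} side, so besides the $L^2$- and $H^1$-stability of $J_\coarse$ you also need its stability from $(L^2(\Gamma),\norm{h_\coarse^{-1}(\cdot)}{L^2(\Gamma)})$ onto $(\XX_\coarse,\norm{h_\coarse^{-1}(\cdot)}{L^2(\Gamma)})$ to obtain the discrete analogue \eqref{eq:intp3}; this does follow from the local bound \eqref{eq:local L2} together with local quasi-uniformity \eqref{eq:two kappa}, but it should be said, since plain $L^2$- and $H^1$-stability alone do not cover it.

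The genuine gap is in your treatment of \eqref{eq:discrete invest old}. To interpolate between the endpoints $\sigma=0$ and $\sigma=1$ you need the interpolation-space identification for the pair $(\PP^p(\QQ_\coarse),\norm{\cdot}{L^2(\Gamma)})$ and $(\PP^p(\QQ_\coarse),\norm{\cdot}{H^{-1}(\Gamma)})$, i.e., by Lemma~\ref{lem:discrete interpolation} a \emph{single} projection onto $\PP^p(\QQ_\coarse)$ that is simultaneously bounded in $L^2(\Gamma)$ and in $H^{-1}(\Gamma)$. The $L^2$-orthogonal projection does not qualify: it is self-adjoint, so the proposed ``duality'' would require it to map $H^1(\Gamma)$ boundedly into $H^1(\Gamma)$, but its images are discontinuous and hence not even in $H^1(\Gamma)$; in fact its action on general $\psi\in H^{-1}(\Gamma)$ is not defined. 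Your remark that a Scott--Zhang-type operator onto $\PP^p(\QQ_\coarse)$ is ``simpler since continuity across elements is no longer required'' has the logic reversed: for an $H^{-1}$-bound by duality the \emph{adjoint} must be $H^1$-stable, which forces the dual functions to be continuous with controlled $H^1$-norms, a biorthogonal construction you neither provide nor cite. In addition, the $\sigma=1$ endpoint $\norm{h_\coarse\Psi_\coarse}{L^2(\Gamma)}\lesssim\norm{\Psi_\coarse}{H^{-1}(\Gamma)}$ is not what \cite{invest} states (its inverse estimates concern the integral operators and the order $1/2$), so that citation does not carry the weight you place on it. The paper sidesteps all of this by taking \eqref{eq:discrete invest old} directly from \cite[Proposition~4.1]{optigabem}, which even covers rational splines; replacing your interpolation argument for $\PP^p(\QQ_\coarse)$ by that citation closes the gap.
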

\begin{proof}
\eqref{eq:discrete invest old} is proved, e.g., in \cite[Proposition~4.1]{optigabem} even for piecewise rational splines. 
We prove the other two assertions in three steps.

\noindent
\textbf{Step 1:} We prove that \eqref{eq:discrete invest} and \eqref{eq:discrete invest2}  hold (even elementwise) for $\sigma\in\{0,1\}$.
We start with \eqref{eq:discrete invest}.
For $\sigma=1$ the assertion is trivial. 
If $\sigma=0$, let $Q\in\QQ_\coarse$, define $\widehat{Q}:=\gamma^{-1}(Q)$, and let $\Phi_{\widehat{Q}}$ be the affine bijection  which maps $[0,1]$ onto $\widehat Q$.
Then, it holds that
\begin{align*}
|V_\coarse|_{H^1(Q)}^2&=\int_Q( \partial_\Gamma V_\coarse)^2 \,dx=\int_{\widehat{Q}} (V_\coarse\circ\gamma)'(t)^2/|\gamma'(t)| \,dt=|\widehat Q|\int_0^1 (V_\coarse\circ\gamma)'(\Phi_{\widehat{Q}}(t))^2/|\gamma'(\Phi_{\widehat Q}(t))|\,dt\\
&\simeq |\widehat{Q}|^{-1}\int_0^1 (V_\coarse\circ\gamma\circ\Phi_{\widehat Q})'(t)^2\,dt=|\widehat{Q}|^{-1}|V_\coarse\circ\gamma\circ\Phi_{\widehat Q}|_{H^1(0,1)}^2.
\end{align*}
Note that $V_\coarse\circ\gamma\circ\Phi_{\widehat{Q}}$ is just a rational function on the interval $[0,1]$. 
It can be written as $q/\widetilde w$ with some polynomials $q,\widetilde w\in \mathcal{P}^p(0,1)$ of degree $p$, where $0<w_{\min}\le\widetilde w\le w_{\max}$. 
Independently of the norm on the finite dimensional space $\mathcal{P}^p(0,1)$, differentiation $(\cdot)':\mathcal{P}^p(0,1)\to \mathcal{P}^p(0,1)$ is continuous.
This proves that $\norm{q'}{L^2(0,1)}\le C\norm{q}{L^2(0,1)}$ as well as $\norm{\widetilde w'}{L^2(0,1)}\le C$, where $C>0$  depends only on   $p$, $w_{\min}$, $w_{\max}$. 
With the quotient rule, we conclude that
\begin{align*}
|V_\coarse\circ\gamma\circ\Phi_{\widehat{Q}}|_{H^1(0,1)}=|q/\widetilde{w}|_{H^1(0,1)}\lesssim \norm{q}{L^2(0,1)}\simeq\norm{V_\coarse\circ\gamma\circ\Phi_{\widehat{Q}}}{L^2(0,1)}.
\end{align*}
This shows that 
\begin{align*}
|V_\coarse|^2_{H^1(Q)}\lesssim|\widehat{Q}|^{-1} \norm{V_\coarse\circ\gamma\circ\Phi_{\widehat Q}}{L^2(0,1)}^2\simeq \norm{h_\coarse^{-1} V_\coarse}{L^2(Q)}^2.
\end{align*}
Now, we consider \eqref{eq:discrete invest2}.
For $\sigma=0$, the assertion is trivial.
The case $\sigma=1$ follows from \eqref{eq:discrete invest} with $\sigma=0$.


\noindent
\textbf{Step 2:}
We prove \eqref{eq:intp1} and \eqref{eq:intp2} for the discrete space $\XX_\coarse$. Note that Proposition \ref{lem:Scott properties} proves that $J_\coarse$ is a stable projection onto $\XX_\coarse$ considered as a mapping from $(L^2(\Gamma),\norm{\cdot}{L^2(\Gamma)})$ to $(\XX_\coarse,\norm{\cdot}{L^2(\Gamma)})$, from $(L^2(\Gamma),\norm{h^{-1}(\cdot)}{L^2(\Gamma)})$ to $(\XX_\coarse,\norm{h^{-1}(\cdot)}{L^2(\Gamma)})$, or from $(H^1(\Gamma),\norm{\cdot}{H^1(\Gamma)})$ to $(\XX_\coarse,\norm{\cdot}{H^1(\Gamma)})$.
Due to \eqref{eq:intp1} and \eqref{eq:intp2}, Lemma~\ref{lem:discrete interpolation} is applicable and proves that
\begin{align}\label{eq:intp3}
(\XX_\coarse,\norm{h_\coarse^{-\sigma}{(\cdot})}{L^2(\Gamma)})\simeq [(\XX_\coarse,\norm{\cdot}{L^2(\Gamma)}),(\XX_\coarse,\norm{h_\coarse^{-1}(\cdot)}{L^2(\Gamma)})]_\sigma,
\end{align}
and 
\begin{align}\label{eq:intp4}
(\XX_\coarse,\norm{\cdot}{H^\sigma(\Gamma)})\simeq[(\XX_\coarse,\norm{\cdot}{L^2(\Gamma)}), (\XX_\coarse,\norm{\cdot}{H^1(\Gamma)})]_\sigma.
\end{align}

\noindent
\textbf{Step 3:}
Consider the differentiation operator
\begin{align*}
{ \partial_\Gamma}:(\XX_\coarse,\norm{\cdot}{H^\sigma(\Gamma)})\to (L^2(\Gamma),\norm{h_\coarse^{1-\sigma}(\cdot)}{L^2(\Gamma)}),
\end{align*}
and the formal identity
\begin{align*}
{\rm id}:(\XX_\coarse,\norm{h_\coarse^{-\sigma}(\cdot)}{L^2(\Gamma)})\to(H^\sigma(\Gamma),\norm{\cdot}{H^\sigma(\Gamma)}).
\end{align*}
Then, \eqref{eq:discrete invest} resp. \eqref{eq:discrete invest2}  is equivalent to boundedness of   $\partial_\Gamma$ resp. ${\rm id}$.
For $\sigma\in\{0,1\}$,  $\partial_\Gamma$ and ${\rm id}$ are bounded according to Step 1. 
Finally, Step 2 and the  well-known interpolation theorem   \cite[Theorem B.2]{mclean} prove boundedness of the mappings  $\partial_\Gamma$ and ${\rm id}$.
\end{proof}

\subsection{Stability on non-refined node patches (E1)}
\label{sec:hypsing stability}
Note that $\pi_\coarse(z)=\pi_\fine(z)$ if $z\in\NNnr_{\coarse,\fine}$ and that $\widetilde h_{\coarse,z}=\widetilde h_{\fine,z}$ for $z\in\NNnr_{\coarse,\fine}$, which follows from the definitions \eqref{eq:tilde h equivalent} and \eqref{eq:nnr}.
The reverse triangle inequality  proves that
\begin{align*}
&|\widetilde\eta_\fine(\NNnr_{\coarse,\fine})-\widetilde\eta_\coarse(\NNnr_{\coarse,\fine})|\\
&\quad\le \Big| \sum_{ z\in\NNnr_{\coarse,\fine}}\big(\norm{\widetilde h_{\fine, z}^{1/2} (g_\fine-\mathfrak{W}U_\fine)}{L^2(\pi_\fine(z))} -\norm{\widetilde h_{\coarse, z}^{1/2} (g_\coarse-\mathfrak{W}U_\coarse)}{L^2(\pi_\fine(z))}  \big)^2\Big|^{1/2}\\
&\qquad+ \Big| \sum_{ z\in\NNnr_{\coarse,\fine}}\big(\norm{\widetilde h_{\fine, z}^{1/2} (\phi-\phi_\fine)}{L^2(\pi_\fine(z))} -\norm{\widetilde h_{\coarse, z}^{1/2} (\phi-\phi_\coarse)}{L^2(\pi_\fine(z))}  \big)^2\Big|^{1/2}\\
&\quad\le \Big| \sum_{ z\in\NNnr_{\coarse,\fine}}\norm{\widetilde h_{\fine, z}^{1/2} \mathfrak{W}(U_\fine-U_\coarse)}{L^2(\pi_\fine(z))}  ^2\Big|^{1/2}
+\Big| \sum_{ z\in\NNnr_{\coarse,\fine}}\norm{\widetilde h_{\fine, z}^{1/2} (g_\fine-g_\coarse)}{L^2(\pi_\fine(z))}  ^2\Big|^{1/2}\\
&\qquad+\Big| \sum_{ z\in\NNnr_{\coarse,\fine}}\norm{\widetilde h_{\fine, z}^{1/2} (\phi_\fine-\phi_\coarse)}{L^2(\pi_\fine(z))}  ^2\Big|^{1/2}.
\end{align*}
The regularity of $\gamma$, local quasi-uniformity \eqref{eq:two kappa}, and the equivalence \eqref{eq:tilde h equivalent} yield that 
\begin{align*}
|\widetilde\eta_\fine(\NNnr_{\coarse,\fine})-\widetilde\eta_\coarse(\NNnr_{\coarse,\fine})|
\lesssim\norm{h_\fine^{1/2} \mathfrak{W}(U_\fine-U_\coarse)}{L^2(\Gamma)}+\norm{h_\fine^{1/2} (g_\fine-g_\coarse)}{L^2(\Gamma)}+\norm{h_\fine^{1/2} (\phi_\fine-\phi_\coarse)}{L^2(\Gamma)}.
\end{align*}
If $\KK_\fine\in\refine(\KK_\coarse)$, nestedness \eqref{eq:nested} shows that $U_\fine-U_\coarse\in\XX_\fine$. 
Otherwise, we define  $\KK_{\ell\cup(\ell+1)}\in\K$ via $\NN_{\ell\cup(\ell+1)}:=\NN_\ell\cup\NN_{\ell+1}=\NN_{\ell+1}$ and $\#_{\ell\cup(\ell+1)} z:=\max\{\#_\ell z,\#_{\ell+1} z\}$ for all $z\in\NN_{\ell+1}$, where $\#_\ell z:=0$ if $z\not\in\NN_\ell$.
Then, $U_\fine-U_\coarse\in\XX_{\ell\cup(\ell+1)}$ and $h_{\ell\cup(\ell+1)}=h_\fine$.
Therefore, in each case, the inverse inequalities \eqref{eq:inverse W}--\eqref{eq:discrete invest old} are applicable and conclude the proof. 
The overall constant $\const{stab}$ depends only on  the parametrization $\gamma$, the polynomial order $p$, and the initial mesh $\widehat\QQ_0$.

\subsection{Reduction on refined node patches (E2)}
Let $\delta>0$.
We apply the  triangle inequality and the Young inequality to see that
\begin{align*}
&\widetilde\eta_\fine(\NNr_{\fine,\coarse})^2=\sum_{ z\in\NNr_{\fine,\coarse}} \Big(\norm{\widetilde h_{\fine,z}^{1/2} (g_\fine-\mathfrak{W}U_\fine)}{L^2(\pi_\fine(z))}^2+\norm{\widetilde h_{\fine,z}^{1/2} (\phi-\phi_\fine)}{L^2(\pi_\fine(z))}^2\Big)\\
&\le \sum_{ z\in\NNr_{\fine,\coarse}}\Big( (1+\delta)^2\norm{\widetilde h_{\fine,z}^{1/2} (g_\coarse-\mathfrak{W}U_\coarse)}{L^2(\pi_\fine(z))}^2+(1+\delta^{-1})\norm{\widetilde h_{\fine,z}^{1/2}\mathfrak{W}(U_\fine-U_\coarse)}{L^2(\pi_\fine(z))}^2\\
&\hspace{9cm}+(1+\delta)(1+\delta^{-1})\norm{\widetilde h_{\fine,z}^{1/2}(g_\fine-g_\coarse)}{L^2(\pi_\fine(z))}^2\Big)\\
&\quad+ \sum_{ z\in\NNr_{\fine,\coarse}} \Big((1+\delta)\norm{\widetilde h_{\fine,z}^{1/2} (\phi-\phi_\coarse)}{L^2(\pi_\fine(z))}^2+(1+\delta^{-1})\norm{\widetilde h_{\fine,z}^{1/2}(\phi_\fine-\phi_\coarse)}{L^2(\pi_\fine(z))}^2\Big).
\end{align*}
We only have to estimate the first terms in each of the last two sums, the other terms can be  estimated as in Section~\ref{sec:hypsing stability}.
We split each patch $\pi_\fine(z)=Q_{\fine,{\rm left}}(z)\cup Q_{\fine,{\rm right}}(z)$  into a (with respect to the parametrization $\gamma$) left  and a right element in $\QQ_\fine$.
We obtain that
\begin{align*}
&\sum_{ z\in\NNr_{\fine,\coarse}} \Big(\norm{\widetilde h_{\fine,z}^{1/2} (g_\coarse-\mathfrak{W}U_\coarse)}{L^2(\pi_\fine(z))}^2+ \norm{\widetilde h_{\fine,z}^{1/2} (\phi-\phi_\coarse)}{L^2(\pi_\fine(z))}^2\Big)\\
&\quad=
\sum_{ z\in\NNr_{\fine,\coarse}}\Big( \norm{\widetilde h_{\fine,z}^{1/2} (g_\coarse-\mathfrak{W}U_\coarse)}{L^2(Q_{\fine,{\rm left}}(z))}^2+ \norm{\widetilde h_{\fine,z}^{1/2} (\phi-\phi_\coarse)}{L^2(Q_{\fine,{\rm left}}(z))}^2\Big)
\\&\qquad+\sum_{ z\in\NNr_{\fine,\coarse}} \Big(\norm{\widetilde h_{\fine,z}^{1/2} (g_\coarse-\mathfrak{W}U_\coarse)}{L^2(Q_{\fine,{\rm right}}(z))}^2+\norm{\widetilde h_{\fine,z}^{1/2} (\phi-\phi_\coarse)}{L^2(Q_{\fine,{\rm right}}(z))}^2\Big).
\end{align*}
Let  $z\in\NNr_{\fine,\coarse}$.
If $z\in\NN_\coarse$, we define $z':=z$ and note that $z'\in \NNr_{\coarse,\fine}$. 
 Otherwise,  there exists a unique $z'\in\NN_\coarse$ with $z\in Q_{\coarse,{\rm left}}(z')$, where $Q_{\coarse,{\rm left}}(z')$ is defined analogously as above.
Again, this implies  that $z'\in\NNr_{\coarse,\fine}$.
Altogether, the contraction property~\eqref{eq:h tilde ctr} yields that 
\begin{align*}
&\sum_{ z\in\NNr_{\fine,\coarse}} \Big(\norm{\widetilde h_{\fine,z}^{1/2} (g_\coarse-\mathfrak{W}U_\coarse)}{L^2(Q_{\fine,{\rm left}}(z))}^2+ \norm{\widetilde h_{\fine,z}^{1/2} (\phi-\phi_\coarse)}{L^2(Q_{\fine,{\rm left}}(z))}^2\Big)\\
&\quad\,\,\le\,\,\sum_{ z'\in\NNr_{\coarse,\fine}}
\sum_{\quad{z=z' \text{ or }  z\in Q_{\coarse,{\rm left}}(z')\setminus\NN_\coarse}} 
\Big(
\norm{\widetilde h_{\fine,z}^{1/2} (g_\coarse-\mathfrak{W}U_\coarse)}{L^2(Q_{\fine,{\rm left}}(z))}^2
+\norm{\widetilde h_{\fine,z}^{1/2} (\phi-\phi_\coarse)}{L^2(Q_{\fine,{\rm left}}(z))}^2\Big)\\
&\quad\stackrel{\eqref{eq:h tilde ctr}}\le  \sum_{ z'\in\NNr_{\coarse,\fine} }\ro{ctr} \Big(\,\norm{\widetilde h_{\coarse,z'}^{1/2} (g_\coarse-\mathfrak{W}U_\coarse)}{L^2(Q_{\coarse,{\rm left}}(z'))}^2+\norm{\widetilde h_{\coarse,z'}^{1/2} (\phi-\phi_\coarse)}{L^2(Q_{\coarse,{\rm left}}(z'))}^2\Big).
\end{align*}
The same holds for the right elements. 
Hence, we end up with 
\begin{align*}
&\sum_{ z\in\NNr_{\fine,\coarse}} \norm{\widetilde h_{\fine,z}^{1/2} (g_\coarse-\mathfrak{W}U_\coarse)}{L^2(\pi_{\fine}(z))}^2+ \norm{\widetilde h_{\fine,z}^{1/2} (\phi-\phi_\coarse)}{L^2(\pi_{\fine}(z))}^2\\
&\le \ro{ctr}  \Big(\sum_{z'\in\NNr_{\coarse,\fine} }\norm{\widetilde h_{\coarse,z'}^{1/2} (g_\coarse-\mathfrak{W}U_\coarse)}{L^2(\pi_{\coarse}(z'))}^2+\norm{\widetilde h_{\coarse,z'}^{1/2} (\phi-\phi_\coarse)}{L^2(\pi_{\coarse}(z'))}^2\Big)=\ro{ctr}\,\widetilde\eta_\coarse(\NNr_{\coarse,\fine})^2.
\end{align*}
Choosing $\delta$ sufficiently small such that $\ro{red}:=(1+\delta)^2 \ro{ctr}<1$, we conclude the proof. 
Moreover, our argument shows that $\const{red}\simeq(1+\delta^{-1})\const{stab}^2$ with a generic hidden constant.

\subsection{Discrete reliability (E3)}
\label{sec:hypsing discrete reliability}
We show that there exist constants $\const{drel},\const{ref}\ge 1$ such that for all $\KK_\coarse\in\K$ and all $\KK_\fine\in\refine(\KK_\coarse)$, the subset 
\begin{align}\label{eq:RR defined2}
\RR_{\coarse,\fine}:=\NN_\coarse\cap\pi_\coarse^{2p+1}(\NNr_{\coarse,\fine})
\end{align}
 satisfies  that
\begin{align}\label{eq:drel start}
\big(\norm{U_\fine-U_\coarse}{H^{1/2}(\Gamma)}^2+\norm{\phi_\fine-\phi_\coarse}{H^{-1/2}(\Gamma)}^2\big)^{1/2}\le \const{drel}\,\widetilde\eta_\coarse(\RR_{\coarse,\fine}),
\end{align}
with
\begin{align}\label{eq:aux1}
\NNr_{\coarse,\fine}\subseteq\RR_{\coarse,\fine}\quad
\text{and}\quad
\#_\coarse\RR_{\coarse,\fine}\le\const{ref} (\#_\fine \NN_\fine-\#_\coarse\NN_\coarse).
\end{align}
Obviously, $\NNr_{\coarse,\fine}\subseteq\RR_{\coarse,\fine}$ is satisfied. 
Hence, the first property of \eqref{eq:aux1}, i.e.,  is obvious.
Since  the maximal knot multiplicity is bounded by $p+1$, it holds that 
\begin{align*}
\#_\coarse\RR_{\coarse,\fine}\le(p+1) |\RR_{\coarse,\fine}|\le(p+1)(4p+3)|\NNr_{\coarse,\fine}|\simeq \#_\coarse\NNr_{\coarse,\fine}, 
\end{align*}
where the hidden constant depends only on $p$.
Note that $z\in\NNr_{\coarse,\fine}$ holds only if a  knot is inserted in the corresponding patch $\pi_\coarse(z)$, where a new knot can be inserted in at most three old patches.
Since $\#_\fine \NN_\fine-\#_\coarse\NN_\coarse$ is the number of all new knots, we see that
\begin{align*}
\#_\coarse\NNr_{\coarse,\fine}\le 3\,(\#_\fine \NN_\fine-\#_\coarse\NN_\coarse).
\end{align*}
In the following four steps, we 
prove \eqref{eq:drel start}.

\noindent
\textbf{Step 1:} 
Let $U_{\fine,\coarse}$ denote the unique Galerkin solution to
\begin{align}
\edual{U_{\fine,\coarse}}{ V_\fine} =\dual{g_\coarse}{V_\fine}_{L^2(\Gamma)}\quad\text{for all }V_\fine \in\XX_\fine.
\end{align}
Ellipticity and the definition of $U_\fine$ as well as $U_{\fine,\coarse}$ show that 
\begin{align*}
\norm{U_\fine-U_{\fine,\coarse}}{H^{1/2}(\Gamma)}^2 &\lesssim \edual{U_\fine-U_{\fine,\coarse}}{U_\fine-U_{\fine,\coarse}}= \dual{g_\fine-g_\coarse}{U_\fine-U_{\fine,\coarse}}_{L^2(\Gamma)}\\
&\le \norm{g_\fine-g_\coarse}{H^{-1/2}(\Gamma)}\norm{U_\fine-U_{\fine,\coarse}}{H^{1/2}(\Gamma)}.
\end{align*}
Together with continuity of $\mathfrak{K}'$, this yields that 
\begin{align}\label{eq:ufine minus uqfine}
\norm{U_\fine-U_{\fine,\coarse}}{H^{1/2}(\Gamma)} \lesssim \norm{g_\fine-g_\coarse}{H^{-1/2}(\Gamma)}\lesssim \norm{\phi_\fine-\phi_\coarse}{H^{-1/2}(\Gamma)}.
\end{align}
Moreover, the triangle inequality and the Young inequality prove that 
\begin{align}
&\big(\norm{U_\fine-U_\coarse}{H^{-1/2}(\Gamma)}+\norm{\phi_\fine-\phi_\coarse}{H^{-1/2}(\Gamma)}\big)^2\notag\\
&\quad\,\,\,\lesssim\,\,\, \norm{U_\fine-U_{\fine,\coarse}}{H^{1/2}(\Gamma)}^2 + \norm{U_{\fine,\coarse}-U_\coarse}{H^{1/2}(\Gamma)}^2 + \norm{\phi_\fine-\phi_\coarse}{H^{-1/2}(\Gamma)}^2\notag\\
&\quad\stackrel{\eqref{eq:ufine minus uqfine}}\lesssim \norm{U_{\fine,\coarse}-U_\coarse}{H^{1/2}(\Gamma)} +\norm{\phi_\fine-\phi_\coarse}{H^{-1/2}(\Gamma)}. \label{eq:drel 3}
\end{align}


\noindent
\textbf{Step 2:}
We estimate the last term in \eqref{eq:drel 3}. 
Since the orthogonal projections $P_\coarse,P_\fine$ onto the space of (transformed) piecewise polynomials satisfy that $P_\fine(1-P_\coarse)=P_\fine-P_\coarse=(1-P_\coarse) P_\fine$, the approximation property \cite[Theorem~4.1]{cp06} shows that 
\begin{align}\label{eq:proj proj}
\begin{split}
\norm{\phi_\fine-\phi_\coarse}{H^{-1/2}(\Gamma)} 
=\norm{(P_{\fine}-P_\coarse)\phi}{H^{-1/2}(\Gamma)}
&\stackrel{\text{\cite{cp06}}}\lesssim \norm{h_\coarse^{1/2}(P_{\fine}-P_\coarse)\phi}{L^2(\Gamma)}\\
&\,\,\,\,\lesssim \,\,\,\,\norm{h_\coarse^{1/2}(\phi- \phi_\coarse)}{L^2(\bigcup (\QQ_\coarse\setminus\QQ_\fine))}.
\end{split}
\end{align}
Note that $\bigcup (\QQ_\coarse\setminus\QQ_\fine)\subseteq\pi_\coarse(\RR_{\coarse,\fine})$. 
Together with the equivalence \eqref{eq:tilde h equivalent}, we obtain that 
\begin{align}
\norm{\phi_\fine-\phi_\coarse}{H^{-1/2}(\Gamma)}\lesssim \widetilde{\rm osc}_\coarse(\RR_{\coarse,\fine}).
\end{align}

\noindent
\textbf{Step 3:} 
To proceed, we apply the projection property \eqref{eq:local projection} for  $V_\fine:=U_{\fine,\coarse}-U_\coarse$.
Let  $Q\in\QQ_\coarse\setminus \Pi_\coarse^{2p+1}(\NNr_{\coarse,\fine})$.
We show that 
\begin{align}\label{eq:S local bem2}
V_\fine|_{\pi_\coarse^p(Q)}\in \XX_\coarse|_{\pi_\coarse^p(Q)}=\set{V_\coarse|_{\pi_\coarse^p(Q)}}{V_\coarse\in\XX_\coarse}, 
\end{align}
wherefore \eqref{eq:local projection} will imply that 
\begin{align}\label{eq:S local bem2 imply}
(1-J_\coarse)(U_{\fine,\coarse}-U_\coarse)=0\quad\text{on}\quad\Gamma\setminus \pi_\coarse^{2p+1}(\NNr_{\coarse,\fine}).
\end{align}
First, we argue by contradiction to see that
\begin{align}\label{eq:inverse patch bem2}
\Pi_\coarse^{2p}(Q)\subseteq \Pi_\coarse(\NNnr_{\coarse,\fine}).
\end{align}
Suppose there exists $Q'\in\Pi_\coarse^{2p}(Q)$ with $Q'\not\in\Pi_\coarse(\NNnr_{\coarse,\fine})$.
This is equivalent to  $Q\in\Pi_\coarse^{2p}(Q')$ and $Q'\in \QQ_\coarse\setminus\Pi_\coarse(\NNnr_{\coarse,\fine})$, which yields that $Q\in\Pi_\coarse^{2p}\big(\QQ_\coarse\setminus
\Pi_\coarse(\NNnr_{\coarse,\fine}))$.
Note that 
\begin{align*}
\QQ_\coarse\setminus\Pi_\coarse(\NNnr_{\coarse,\fine})\subseteq\Pi_\coarse(\NNr_{\coarse,\fine}),
\end{align*}
since $Q''$ in the left-hand side implies that $Q''\cap\NN_\coarse\neq\emptyset$ and $z\not\in Q''$ for all $z\in\NNnr_{\coarse,\fine}$, and hence implies  the existence of $z\in\NNr_{\coarse,\fine}$ with $z\in Q''$.
Altogether, we see that
\begin{align*}
Q\in\Pi_\coarse^{2p+1}(\NNr_{\coarse,\fine}),
\end{align*}
which contradicts that  $Q\in\QQ_\coarse\setminus \Pi_\coarse^{2p+1}(\NNr_{\coarse,\fine})$ and thus proves \eqref{eq:inverse patch bem2}.

Next, we prove \eqref{eq:S local bem2}.
Note that $V_\fine|_{\pi_\coarse(Q)}$  can be written as  linear combination of (transformed) B-splines $B_{\fine,i,p}:=\widehat B_{\fine,i,p}\circ\gamma^{-1}$ that have support on $\pi_\coarse(Q)$.
By  Lemma~\ref{lem:properties for B-splines} \eqref{item:B-splines local},  $\supp(B_{\fine,i,p})$ is connected and consists of at most $p+1$ elements, which  implies that $\supp(B_{\fine,i,p})\subseteq\pi_\coarse^{2p}(T)$.
We show that no knots are inserted in $\pi_\coarse^{2p}(Q)$ 
and thus in $\supp(B_{\fine,i,p})$ 
during the refinement from $\KK_\coarse$ to $\KK_\fine$.
To see this, let $z'\in\NN_\fine$ be a corresponding node.
Since $\NNnr_{\coarse,\fine}$ is just the set of all nodes $z$ such that no new knot is inserted in the patch of $z$, $z'$ cannot belong to $\pi_\coarse(\NNnr_{\coarse,\fine})$.
Hence, \eqref{eq:inverse patch bem2} implies that $z'\not\in\pi_\coarse^{2p}(Q)$.
With this,  Lemma~\ref{lem:properties for B-splines} \eqref{item:B-splines determined}  proves that $B_{\fine,i,p}=B_{\coarse,i',p}$ for some B-spline $B_{\coarse,i',p}:=\widehat B_{\fine,i',p}\circ\gamma^{-1}$.
In particular,  $V_\fine|_{\pi_\coarse(Q)}$  can be written as  linear combination of (transformed) B-splines corresponding to  $\KK_\coarse$, which implies \eqref{eq:S local bem2} and  \eqref{eq:S local bem2 imply}.

\noindent
\textbf{Step 4:} 
It remains to estimate the second term in \eqref{eq:drel 3}.
Due to ellipticity as well as Galerkin orthogonality, we see that 
\begin{align*}
\norm{U_{\fine,\coarse}-U_\coarse}{H^{1/2}(\Gamma)}^2&\lesssim \edual{ U_{\fine,\coarse}-U_\coarse}{U_{\fine,\coarse}-U_\coarse} =\edual{ U_{\fine,\coarse}-U_\coarse}{(1-J_\coarse)(U_{\fine,\coarse}-U_\coarse)}.
\end{align*}
It holds that
\begin{align*}
0=\edual{ U_{\fine,\coarse}-U_\coarse}{1}= \dual{\mathfrak{W} (U_{\fine,\coarse}-U_\coarse)}{1}_{L^2(\Gamma)} +\dual{U_{\fine,\coarse}-U_\coarse}{1}_{L^2(\Gamma)} \dual{1}{1}_{L^2(\Gamma)}.
\end{align*}
Since $ \dual{\mathfrak{W} (U_{\fine,\coarse}-U_\coarse)}{1}_{L^2(\Gamma)}=0$, $U_{\fine,\coarse}-U_\coarse$ has integral mean zero.
Altogether, we see  that 
\begin{align}\label{eq:star plus minus star}
\norm{U_{\coarse,\fine}-U_\coarse}{H^{1/2}(\Gamma)}^2\lesssim
\dual{g_\coarse-\mathfrak{W} U_\coarse}{(1-J_\coarse)(U_{\fine,\coarse}-U_\coarse)}_{L^2(\Gamma)}.
\end{align}
With \eqref{eq:S local bem2 imply} of Step~3 and the Cauchy-Schwarz inequality, we thus obtain that
\begin{align*}
&\norm{U_{\fine,\coarse}-U_\coarse}{H^{1/2}(\Gamma)}^2\stackrel{\eqref{eq:S local bem2 imply}}{\lesssim} \dual{ g_{\coarse}-\mathfrak{W}U_\coarse}{(1-J_\coarse)(U_{\fine,\coarse}-U_\coarse)}_{L^2(\pi_\coarse^{2p+1}(\NNr_{\coarse,\fine}))}\\
&\le \norm{ h_\coarse^{1/2}(g_\coarse-\mathfrak{W}U_\coarse)}{L^2(\pi_\coarse^{2p+1}(\NNr_{\coarse,\fine}))} 
\norm{ h_\coarse^{-1/2}((1-J_\coarse)(U_{\fine,\coarse}-U_\coarse)}{L^2(\pi_\coarse^{2p+1}(\NNr_{\coarse,\fine}))}.
\end{align*}
The equivalence \eqref{eq:tilde h equivalent} and the approximation property \eqref{eq:local approx} yield that
\begin{align*}
\norm{U_{\fine,\coarse}-U_\coarse}{H^{1/2}(\Gamma)}^2\lesssim \widetilde\eta_\coarse(\RR_{\coarse,\fine})\norm{U_{\fine,\coarse}-U_\coarse}{H^{1/2}(\Gamma)}.
\end{align*}
This concludes the proof.
The constants $\const{drel},\const{ref}$ depend only on the parametrization $\gamma$, the polynomial order $p$, and the initial mesh $\widehat\QQ_0$.

\subsection{Reliability in (\ref{eq:reliable})}
\label{sec:reliable hyper}
We only consider the case $\phi_\coarse:=\phi$ for all $\KK_\coarse\in\K$.
The other case, i.e., $\phi_\coarse:=P_\coarse\phi$ for all $\KK_\coarse\in\K$, follows analogously. 

\noindent
\textbf{Step 1:} First, we show that for arbitrary $\varepsilon>0$, there exists a refinement $\KK_\fine\in\refine(\KK_\coarse)$ such that $\norm{u-U_\fine}{H^{1/2}(\Gamma)}\le\varepsilon$.
Indeed, the C\'ea lemma proves that $\norm{u-U_\fine}{H^{1/2}(\Gamma)}\lesssim\norm{(1-J_\fine)u}{H^{1/2}(\Gamma)}$.
Note  that $u\in H^1(\Gamma)$ due of the mapping properties of $\mathfrak{W}$ and $\mathfrak{K}'$  and the assumption that  $\phi\in L^2(\Gamma)$.
Therefore, the localization argument \cite[Lemma~2.3]{faermann2d} in combination with the Sobolev-seminorm estimate \cite[Lemma~4.5]{resigabem} gives that 
\begin{align*}
\norm{(1-J_\fine)u}{H^{1/2}(\Gamma)}\lesssim \norm{h_\fine^{1/2}\partial_\Gamma (1-J_\fine)u}{L^2(\Gamma)}+\norm{h_\fine^{-1/2} (1-J_\fine) u}{L^2(\Gamma)}.
\end{align*}
Proposition~\ref{lem:Scott properties} implies that 
\begin{align*}
\norm{(1-J_\fine)u}{H^{1/2}(\Gamma)}\lesssim \norm{h_\fine^{1/2} }{L^\infty(\Gamma)} \norm{u}{H^1(\Gamma)} \to 0\quad \text{as}\quad\norm{h_\fine^{1/2} }{L^\infty(\Gamma)}\to 0.
\end{align*}

\noindent
\textbf{Step 2:} 
For $\varepsilon>0$, let $\KK_\fine$ be as in Step~1. 
The triangle inequality and discrete reliability~(E3) yield that 
\begin{align*}
\norm{u-U_\coarse}{H^{1/2}(\Gamma)}\le\norm{u-U_\fine}{H^{1/2}(\Gamma)} + \norm{U_\fine-U_\coarse}{H^{1/2}(\Gamma)}\le \varepsilon + \widetilde\eta_\coarse\lesssim\varepsilon+\eta_\coarse.
\end{align*}
For $\varepsilon\to0$, we conclude reliability \eqref{eq:reliable}.

\subsection{Efficiency in (\ref{eq:reliable})}
\label{sec:efficient hyper}
Clearly, it suffices to bound the residual part ${\rm res}_\coarse$ of the estimator $\eta_\coarse$ by $(\norm{h_\coarse^{1/2} \partial_\Gamma( u - U_\coarse )}{L^2(\Gamma)}^2
 + \norm{h_\coarse^{1/2} ( \phi - \phi_\coarse )}{L^2(\Gamma)}^2)^{1/2}$.
 To do so, we use the triangle inequality 
\begin{align}\label{eq:aux efficiency}
{\rm res}_\coarse
\le
\norm{h_\coarse^{1/2}(1/2-\mathfrak{K}')(\phi-\phi_\coarse)}{L^2(\Gamma)}
+
\norm{h_\coarse^{1/2}(1/2-\mathfrak{K}')\phi-\mathfrak{W}U_\coarse}{L^2(\Gamma)}
\end{align}
and bound each of the two terms separately. 
To control the first one, we  apply the inverse inequality \eqref{eq:inverse Kpr} and the approximation property \cite[Theorem~4.1]{cp06}
\begin{align*}
\norm{h_\coarse^{1/2}(1/2-\mathfrak{K}')(\phi-\phi_\coarse)}{L^2(\Gamma)}
&\stackrel{\eqref{eq:inverse Kpr}}\lesssim 
\norm{\phi-\phi_\coarse}{H^{-1/2}(\Gamma)} + \norm{h_\coarse^{1/2}(\phi-\phi_\coarse)}{L^2(\Gamma)}
\\
&\stackrel{\text{\cite{cp06}}}\lesssim
 \norm{h_\coarse^{1/2}(\phi-\phi_\coarse)}{L^2(\Gamma)}.
\end{align*}
For the second term in \eqref{eq:aux efficiency}, we use the inverse inequality~\eqref{eq:inverse W}
\begin{align*}
\norm{h_\coarse^{1/2}(1/2-\mathfrak{K}')\phi-\mathfrak{W}U_\coarse}{L^2(\Gamma)}
&\,\,\,=\,\,\,
\norm{h_\coarse^{1/2}\mathfrak{W}(u-U_\coarse)}{L^2(\Gamma)}
\\
&\stackrel{\eqref{eq:inverse W}}\lesssim
\norm{u-U_\coarse}{H^{1/2}(\Gamma)}
+
\norm{h_\coarse^{1/2} \partial_\Gamma(u-U_\coarse)}{L^2(\Gamma)}. 
\end{align*}
Altogether, it only remains to estimate the term $\norm{u-U_\coarse}{H^{1/2}(\Gamma)}$. 
To this end, we denote the Galerkin projection onto $\XX_\coarse$ by $G_h:H^{1/2}(\Gamma)\to\XX_\coarse$ and note that $(1-G_\coarse)=(1-G_\coarse)(1-J_\coarse)(1-G_\coarse)$.
Then, stability of $G_\coarse$ and the  approximation property~\eqref{eq:local approx2} prove that
\begin{align*}
\norm{u-U_\coarse}{H^{1/2}(\Gamma)}&\,\,\,= \,\,\,
\norm{(1-G_\coarse)(1-J_\coarse)(1-G_\coarse)u}{H^{1/2}(\Gamma)}
\lesssim \norm{(1-J_\coarse)(1-G_\coarse)u}{H^{1/2}(\Gamma)}
\\
&\stackrel{\eqref{eq:local approx2}}\lesssim \norm{h_\coarse^{1/2} (1-G_\coarse) u}{L^2(\Gamma)}=
\norm{h_\coarse^{1/2} \partial_\Gamma(u-U_\coarse)}{L^2(\Gamma)}. 
\end{align*}

\subsection{General quasi-orthogonality (E4)}
\label{subsec:hypsing orthogonality}
For sufficiently small $\vartheta>0$, we prove general quasi-orthogonality in three steps.
For $\KK_\coarse,\KK_\star\in\K$, we define $\KK_{\coarse\cap\star}\in\K$ via $\NN_{\coarse\cap\star}:=\NN_\coarse\cap\NN_\star$ and $\#_{\coarse\cap\star} z:=\min\{\#_{\coarse}z,\#_\star z\}$ for all $z\in\NN_\coarse\cap\NN_\star$.

\noindent
\textbf{Step 1:}
First, we prove some kind of  discrete reliability of $\mu$:
There exists a constant $C_{\rm drel}^-\ge1$ 
such that %
\begin{align}\label{eq:drel}
\norm{U_{k}-U_{k\cap (k+1)}}{H^{1/2}(\Gamma)}^{2}\le C_{\rm drel}^-\, \mu_k(\MM_k^-)^{2} \quad \text{for all }k\in\N_0. 
\end{align}
To see this, we note that $U_{k\cap (k+1)}\in\XX_k\cap\XX_{k+1}$ is also the Galerkin projection of   $U_k$.
Hence, the C\'ea lemma and  the inverse estimate \eqref{eq:discrete invest2} yield that
\begin{align*}
\norm{U_{k}-U_{k\cap (k+1)}}{H^{1/2}(\Gamma)}^2&\stackrel{\text{C\'ea}}\lesssim \norm{(1-J_{k\cap (k+1)})U_{k}}{H^{1/2}(\Gamma)}^2\stackrel{\eqref{eq:discrete invest2}}\lesssim \norm{h_{k\cap (k+1)}^{-1/2}(1-J_{k\cap (k+1)})U_{k}}{L^2(\Gamma)}^2.
\end{align*}
Note that $h_{k}=h_{k\cap (k+1)}$ and $\pi_{k}(\cdot)=\pi_{k\cap (k+1)}(\cdot)$.
Further, Lemma~\ref{lem:properties for B-splines} \eqref{item:spline basis} shows for all $Q\in\QQ_k$ that $U_k|_{\pi_k^p(Q)}\in\XX_{k\cap (k+1)}|_{\pi_k^p(Q)}$ if $\pi_k^p(Q)\cap\MM_k^-=\emptyset$.
Thus, the local projection property \eqref{eq:local projection} yields that
\begin{align*}
\norm{h_{k\cap (k+1)}^{-1/2}(1-J_{k\cap (k+1)})U_{k}}{L^2(\Gamma)}^2=\norm{h_{k}^{-1/2}(1-J_{k\cap (k+1)})U_{k}}{L^2(\pi_{k}^{p+1}(\MM_{k}^-))}^2.
\end{align*}
Note that $\XX_{k\ominus1}\subseteq\XX_{k\cap (k+1)}$.
Together with the  projection property \eqref{eq:local projection} and the local $L^2$-stability \eqref{eq:local L2}, the triangle inequality implies that
\begin{align*}
&\norm{h_{k}^{-1/2}(1-J_{k\cap (k+1)})U_{k}}{L^2(\pi_{k}^{p+1}(\MM_{k}^-))}
\le  \norm{h_{k}^{-1/2}(1-J_{k\cap (k+1)}J_{k-\ominus 1})U_{k}}{L^2(\pi_{k}^{p+1}(\MM_{k}^-))} 
\\&\hspace{7cm}+\norm{h_{k}^{-1/2}J_{k\cap(k+1)}(1-J_{\ominus1})U_{k}}{L^2(\pi_{k}^{p+1}(\MM_{k}^-))}^2
\\
&\stackrel{\eqref{eq:local projection}+\eqref{eq:local L2}}\lesssim \norm{h_{k}^{-1/2}(1-J_{k\ominus 1})U_{k}}{L^2(\pi_{k}^{2p+1}(\MM_{k}^-))}\le \mu_k(\MM_{k}^-).
\end{align*}
The constant $\const{drel}^-$ in \eqref{eq:drel} depends only on the parametrization $\gamma$, the polynomial order $p$, and the initial mesh $\widehat\QQ_0$.


\noindent\textbf{Step 2:}
Next, we prove 
the existence of   some constant $C_{\rm mon}'\ge 1$ such that
\begin{align}\label{eq:qquasi monotonicity}
\widetilde\eta_{k+1}^{\,2}\le C_{\rm mon}'\, \widetilde\eta_k^{\,2}\quad\text{for all }k\in\N_0.
\end{align}
By stability (E1) and reduction (E2), we have that
\begin{align}\label{eq:monoton help}
\widetilde\eta_{k+1}^{\,2}\lesssim \widetilde\eta_k^{\,2}+\norm{U_{k+1}-U_k}{H^{1/2}(\Gamma)}^2+\norm{\phi_{k+1}-\phi_k}{H^{-1/2}(\Gamma)}^2.
\end{align}
To estimate $\norm{U_{k+1}-U_k}{H^{1/2}(\Gamma)}^2$ of \eqref{eq:monoton help}, we use  ellipticity,  Galerkin orthogonality, and Young's inequality
\begin{align}
&\norm{U_{k+1}-U_k}{H^{1/2}(\Gamma)}^2\simeq\norm{U_{k+1}-U_k}{\mathfrak{W}}^2\lesssim 
\norm{U_{k+1}-U_{k\cap (k+1)}}{\mathfrak{W}}^2+
\norm{U_k-U_{k\cap (k+1)}}{\mathfrak{W}}^2\notag\\
\label{eq:k+1-k}
&\quad=
\big(\norm{u-U_{k\cap (k+1)}}{\mathfrak{W}}^2
-\norm{u-U_{k+1}}{\mathfrak{W}}^2\big)
+\norm{U_k-U_{k\cap (k+1)}}{\mathfrak{W}}^2\\
&\quad\le 2\norm{u-U_k}{\mathfrak{W}}^2+3 \norm{U_k-U_{k\cap (k+1)}}{\mathfrak{W}}^2.
\stackrel{\eqref{eq:drel}}
\lesssim \norm{u-U_k}{\mathfrak{W}}^2+\const{drel}^-\,\mu_k(\MM_k^-)^2
\stackrel{\eqref{eq:reliable}+\rm(v)}\lesssim \eta_k^2
\stackrel{\eqref{eq:abstract equivalence}}\simeq\widetilde\eta_k^{\,2}.\notag
\end{align}
To estimate $\norm{\phi_{k+1}-\phi_k}{H^{-1/2}(\Gamma)}^2$ of \eqref{eq:monoton help}, we note that (although $\XX_k$ and $\XX_{k+1}$ are not necessarily nested) the set of (transformed) $\QQ_k$-piecewise polynomials of degree $p$ is a subset of the set of (transformed) $\QQ_{k+1}$-piecewise polynomials of degree $p$.
Hence, \eqref{eq:proj proj} gives that
\begin{align}\label{eq:E4 E3 connect}
\begin{split}
\norm{\phi_{k+1}-\phi_k}{H^{-1/2}(\Gamma)} 
\stackrel{\eqref{eq:proj proj}}\lesssim
 \norm{h_k^{1/2}(\phi- \phi_k)}{L^2(\bigcup (\QQ_k\setminus\QQ_{k+1}))}
\le {\rm osc}_k \stackrel{\eqref{eq:tilde h equivalent}}\simeq\widetilde{\rm osc}_k.
\end{split}
\end{align}
The constant $C_{\rm mon}'$ depends only on the parametrization $\gamma$, the polynomial order $p$,  the initial mesh $\widehat\QQ_0$, and an arbitrary but fixed upper bound for the parameter $\vartheta$.
\\
\textbf{Step 3:} We finally come to (E4) itself.
With \eqref{eq:k+1-k},  Galerkin orthogonality gives that
\begin{align}
\label{eq:second Galerkin}
&\norm{U_{k+1}-U_k}{H^{1/2}(\Gamma)}^2
\stackrel{\eqref{eq:k+1-k} }\lesssim
\big(\norm{u-U_{k\cap (k+1)}}{\mathfrak{W}}^2
-\norm{u-U_{k+1}}{\mathfrak{W}}^2\big)
+\norm{U_k-U_{k\cap (k+1)}}{\mathfrak{W}}^2\\
&= \norm{u-U_{k\cap (k+1)}}{\mathfrak{W}}^2-\norm{u-U_{(k+1) \cap (k+2)}}{\mathfrak{W}}^2
+\norm{U_{k+1}-U_{(k+1) \cap (k+2)}}{\mathfrak{W}}^2+\norm{U_k-U_{k\cap (k+1)}}{\mathfrak{W}}^2.
\notag
\end{align}
We abbreviate the hidden (generic) constant by $C>0$.
With Step 1 and 2 in combination with
Algorithm~\ref{the algorithm}~(v),
the third plus the fourth term can be estimated by
\begin{align}
\begin{split}
&
\norm{U_{k+1}-U_{(k+1) \cap (k+2)}}{\mathfrak{W}}^2+\norm{U_k-U_{k\cap (k+1)}}{\mathfrak{W}}^2
\stackrel{\eqref{eq:drel}}
\le 
C_{\rm drel}^-\big(\mu_{k+1}(\MM_{k+1}^-)+\mu_k(\MM_k^-)\big)
\\
&\qquad\stackrel{{\rm (v)}}\le 
C_{\rm drel}^-\vartheta(\eta_{k+1}^2+\eta_k^2)
\stackrel{\eqref{eq:qquasi monotonicity}}\le 
 C_{\rm drel}^-C_{\rm eq}\vartheta(\widetilde\eta_{k+1}^{\,2}+\widetilde\eta_k^{\,2})
 \le
 C_{\rm drel}^- C_{\rm eq}(C_{\rm mon}'+1)\vartheta\widetilde\eta_k^{\,2}.
 \end{split}
\end{align}
Suppose that
$\vartheta>0$ is sufficiently small such that 
\begin{align}
\label{eq:E4 vartheta}
\varepsilon_{\rm qo}:=
C C_{\rm drel}^- C_{\rm eq}(C_{\rm mon}'+1)\vartheta
 <
 \sup_{\widetilde\delta>0}\frac{1-(1+\widetilde\delta)(1-(1-\rhored)\theta)}{\Cred+(2+\widetilde\delta^{\,-1})\Cstab^2}.
\end{align} 
Combining \eqref{eq:second Galerkin}--\eqref{eq:E4 vartheta}, we obtain that
\begin{align*}
\norm{U_{k+1}-U_k}{H^{1/2}(\Gamma)}^2-\varepsilon_{\mathrm{qo}}\widetilde\eta_k^{\,2}
\lesssim 
\norm{u-U_{k\cap (k+1)}}{\mathfrak{W}}^2-\norm{u-U_{(k+1) \cap (k+2)}}{\mathfrak{W}}^2.
\end{align*}
 
Together with Step 1, 
Algorithm~\ref{the algorithm}~(v) and reliability \eqref{eq:reliable}, we derive that 
\begin{align*}
&\sum_{k=\ell}^{\ell+N}\Big(\norm{U_{k+1}-U_k}{H^{1/2}(\Gamma)}^2-\varepsilon_{\mathrm{qo}}\widetilde\eta_k^{\,2}\Big)
\lesssim 
\sum_{k=\ell}^{\ell+N} \Big(\norm{u-U_{k\cap (k+1)}}{\mathfrak{W}}^2-\norm{u-U_{(k+1) \cap (k+2)}}{\mathfrak{W}}^2\Big)
\\
&\,\leq\, \norm{u-U_{\ell\cap (\ell+1)}}{\mathfrak{W}}^2\lesssim  \norm{u-U_\ell}{\mathfrak{W}}^2+\norm{U_\ell-U_{\ell\cap (\ell+1)}}{\mathfrak{W}}^2
\stackrel{\eqref{eq:drel}}\lesssim \norm{u-U_\ell}{\mathfrak{W}}^2+\mu_k(\MM_k^-)^{2} 
\\
&\stackrel{{\rm (v)}}\lesssim 
\norm{u-U_\ell}{\mathfrak{W}}^2+\vartheta\widetilde\eta_\ell^{\,2}
\stackrel{\eqref{eq:abstract equivalence}}\lesssim
\widetilde\eta_\ell^{\,2}
\stackrel{\eqref{eq:reliable}}\simeq
\eta_\ell^2.
\end{align*} 

It remains to estimate the sum  $\sum_{k=\ell}^{\ell+N}\norm{\phi_{k+1}-\phi_k}{H^{-1/2}(\Gamma)}^2$. 
To this end, we note that $h_{k+1}\le q h_k$ on $\bigcup(\QQ_k\setminus\QQ_{k+1})$ for some constant $0<q<1$ that depends only on $\gamma$ and $\widehat\QQ_0$.
 This yields that  $0\le(1-q)h_k \,\chi_{\bigcup\QQ_k\setminus\QQ_{k+1}}\le h_k-h_{k+1}$.
With \eqref{eq:E4 E3 connect} and the best approximation property of $P_{k+1}$, we thus derive that
\begin{align*}
&\norm{\phi_{k+1}-\phi_k}{H^{-1/2}(\Gamma)}^2
\stackrel{\eqref{eq:E4 E3 connect}}\lesssim 
\norm{h_k^{1/2}(\phi-\phi_k)}{L^{2}(\bigcup\QQ_k\setminus\QQ_{k+1})}^2
\lesssim 
\norm{(h_k-h_{k+1})^{1/2}(\phi-\phi_k)}{L^2(\Gamma)}^2\\
&\qquad\le\norm{h_k^{1/2}(\phi-\phi_k)}{L^2(\Gamma)}^2-\norm{h_{k+1}^{1/2}(\phi-\phi_{k+1})}{L^2(\Gamma)}^2={\rm osc}_k^2-{\rm osc}_{k+1}^2.
\end{align*}
In particular, we see that
\begin{align*}
\sum_{k=\ell}^{\ell+N}\norm{\phi_{k+1}-\phi_k}{H^{-1/2}(\Gamma)}^2\lesssim \sum_{k=\ell}^{\ell+N} ({\rm osc}_k^2-{\rm osc}_{k+1}^2)\le {\rm osc}_\ell^2\stackrel{\eqref{eq:tilde h equivalent}}\simeq \widetilde{\rm osc}_\ell^{\,2}\le\widetilde\eta_\ell^{\,2}.
\end{align*}

\subsection{Son estimate (R1)}
\label{sec:hypsing son}
According to Algorithm~\ref{alg:refinement}, any marked node of $\MM_\ell\subseteq\NN_\ell$ leads to at most two additional knots (if the marked node already has full multiplicity).
Since the generation of $\KK_{\ell+1}$ in Algorithm~\ref{the algorithm} is based on Algorithm~\ref{alg:refinement} with the additional possibility of multiplicity decrease, 
this yields (R1) even with the explicit constant $\const{son}=3$.

\subsection{Closure estimate (R2) and overlay property (R3)}
\label{sec:hypsing closure}
The proofs are already found in \cite[Proposition~2.2]{optigabem} for the weakly-singular case without the possibility of knot multiplicity decrease and can  immediately be extended to the current situation.

\subsection{Linear convergence (\ref{eq:R-linear})}
\label{sec:linear convergence}
In this section, we first prove that (E1)--(E2) imply estimator reduction of $\widetilde\eta$ in the sense that there exist $0<\ro{est}<1$ and $\const{est}>0$ such that
\begin{align}\label{eq:estimator reduction}
\widetilde\eta_{\ell+1}^{\,2}\le \ro{est}\,\widetilde\eta_\ell^{\,2}+\const{est} \varrho_{\ell,\ell+1}^2\quad\text{for all }\ell\in\N_0,
\end{align}
where $\ro{est}=(1+\delta)(1-(1-\ro{red})\widetilde\theta)$ with $\widetilde\theta:=\const{eq}^{-2}\theta$ and $\const{est}=\const{red}+(1+\delta^{-1})\const{stab}^2$ for all sufficiently small $\delta>0$ with $\ro{est}<1$. 
The critical observation is that Algorithm~\ref{the algorithm} implies that 
$\MM_\ell\subseteq \NNr_{\ell,\ell+1}$, where $\MM_\ell$ satisfies the D\"orfler marking $\theta\eta_\ell^2\le\eta_\ell(\MM_\ell)^2$ and thus $\widetilde\theta\widetilde\eta_\ell^{\,2}\le\widetilde\eta_\ell(\MM_\ell)^{\,2}$ due to the equivalence \eqref{eq:abstract equivalence}.
With this, the proof follows along the lines of \cite[Section~4.3]{axioms}.
We split the estimator, apply the Young inequality in combination with stability (E1) and reduction (E2) to see  that, for all $\delta>0$, 
\begin{align*}
\widetilde\eta_{\ell+1}^{\,2}&=\widetilde\eta_{\ell+1}(\NNnr_{\ell,\ell+1})^{\,2}+\widetilde\eta_{\ell+1}(\NNr_{\ell+1,\ell})^{\,2}\\
&\le (1+\delta)\widetilde\eta_{\ell}(\NNnr_{\ell,\ell+1})^{\,2}
+\ro{red}\widetilde\eta_{\ell}(\NNr_{\ell,\ell+1})^{\,2}
+\const{est}\,\varrho_{\ell,\ell+1}^2\\
&\le (1+\delta)\big(\widetilde\eta_\ell^{\,2}-(1-\ro{red})\big) \,\widetilde\eta_\ell(\NNnr_{\ell,\ell+1})^{\,2}
+\const{est}\varrho_{\ell,\ell+1}^2\\
&\le (1+\delta)(1-(1-\ro{red})\widetilde\theta)\,\widetilde\eta_\ell^{\,2}+\const{est}\,\varrho_{\ell,\ell+1}^2, 
\end{align*}
which concludes estimator reduction \eqref{eq:estimator reduction}. 
According to \cite[Proposition~4.10]{axioms}, this together with general quasi-orthogonality~(E4) yields linear convergence of $\widetilde\eta$ and thus also of $\eta$ due to the equivalence~\eqref{eq:abstract equivalence}.

\subsection{Optimal convergence (\ref{eq:optimal})}
\label{sec:optimal convergence}
We start with the following proposition, which states that D\"orfler marking is not only sufficient for linear convergence, but in some sense even necessary.
For standard element-based adaptive algorithms, it is proved, e.g., in \cite[Proposition~4.12]{axioms}.
We note that the proof follows essentially along the same lines and is only given for the sake of completeness.

 \begin{proposition}\label{prop:Doerfler optimal}
Suppose stability {\rm (E1)} and discrete reliability {\rm (E3)}.
Let $\KK_\coarse\in\K$ and $\KK_\fine\in\refine(\KK_\coarse)$.
Then, for all $0<\widetilde\theta<\widetilde\theta_{\rm opt}:=(1+\const{stab}^2\const{drel}^2)^{-1}$, there exists some $0<\ro{\widetilde\theta}<1$ such that 
\begin{align}\label{eq:Doerfler optimal}
\widetilde\eta_\fine^{\,2}\le \ro{\widetilde\theta}\,\widetilde\eta_\coarse^{\,2}
\quad \Longrightarrow \quad \widetilde\theta\,\widetilde\eta_\coarse^{\,2}\le \widetilde\eta_\coarse(\RR_{\coarse,\fine})^{\,2}.
\end{align}
The constant $\ro{\widetilde\theta}$ depends only on $\const{stab}, \const{drel}$, $\widetilde\theta$.
 \end{proposition}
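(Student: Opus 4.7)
The plan is to mimic the standard proof of the ``D\"orfler marking is necessary'' step from \cite{axioms} (Proposition~4.12 there), using only stability (E1) and discrete reliability (E3), combined with the splitting $\NN_\coarse = \NNnr_{\coarse,\fine} \cup \NNr_{\coarse,\fine}$ and the inclusion $\NNr_{\coarse,\fine}\subseteq\RR_{\coarse,\fine}$ from (E3).

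First I would split
\[
\widetilde\eta_\coarse^{\,2} \;=\; \widetilde\eta_\coarse(\NNnr_{\coarse,\fine})^{\,2} + \widetilde\eta_\coarse(\NNr_{\coarse,\fine})^{\,2}.
\]
For the first summand, I apply the reverse triangle inequality and the Young inequality to the stability estimate (E1), yielding for any $\delta>0$
\[
\widetilde\eta_\coarse(\NNnr_{\coarse,\fine})^{\,2}
\;\le\; (1+\delta)\,\widetilde\eta_\fine(\NNnr_{\coarse,\fine})^{\,2} + (1+\delta^{-1})\,\Cstab^2\,\varrho_{\coarse,\fine}^2.
\]
Using that $\widetilde\eta_\fine(\NNnr_{\coarse,\fine})^{\,2}\le\widetilde\eta_\fine^{\,2}\le\ro{\widetilde\theta}\,\widetilde\eta_\coarse^{\,2}$ (by assumption) and inserting discrete reliability (E3) into the perturbation term, I obtain
\[
\widetilde\eta_\coarse(\NNnr_{\coarse,\fine})^{\,2}
\;\le\; (1+\delta)\,\ro{\widetilde\theta}\,\widetilde\eta_\coarse^{\,2} + (1+\delta^{-1})\,\Cstab^2\,\Cdrel^2\,\widetilde\eta_\coarse(\RR_{\coarse,\fine})^{\,2}.
\]
For the second summand, $\NNr_{\coarse,\fine}\subseteq\RR_{\coarse,\fine}$ immediately gives $\widetilde\eta_\coarse(\NNr_{\coarse,\fine})^{\,2} \le \widetilde\eta_\coarse(\RR_{\coarse,\fine})^{\,2}$.

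Adding the two contributions and rearranging yields
\[
\bigl(1-(1+\delta)\,\ro{\widetilde\theta}\bigr)\,\widetilde\eta_\coarse^{\,2}
\;\le\; \bigl(1+(1+\delta^{-1})\,\Cstab^2\,\Cdrel^2\bigr)\,\widetilde\eta_\coarse(\RR_{\coarse,\fine})^{\,2}.
\]
Since the supremum over $\delta>0$ (as $\ro{\widetilde\theta}\to 0$) of the ratio $(1-(1+\delta)\ro{\widetilde\theta})/(1+(1+\delta^{-1})\Cstab^2\Cdrel^2)$ equals $(1+\Cstab^2\Cdrel^2)^{-1}=\widetilde\theta_{\rm opt}$ (attained in the limit $\delta\to\infty$), for any $\widetilde\theta<\widetilde\theta_{\rm opt}$ I can first fix $\delta>0$ sufficiently large so that $\widetilde\theta<(1+(1+\delta^{-1})\Cstab^2\Cdrel^2)^{-1}$ and then choose $\ro{\widetilde\theta}\in(0,1)$ small enough that $(1-(1+\delta)\ro{\widetilde\theta})\ge\widetilde\theta\,(1+(1+\delta^{-1})\Cstab^2\Cdrel^2)$. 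This gives exactly the implication \eqref{eq:Doerfler optimal}, with $\ro{\widetilde\theta}$ depending only on $\Cstab,\Cdrel,\widetilde\theta$.

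There is essentially no obstacle here beyond bookkeeping of the two parameters $\delta$ and $\ro{\widetilde\theta}$; the argument is already completely set up by (E1) and (E3), and the key observation is simply that the non-refined part of the coarse estimator is controlled, up to a Galerkin perturbation, by the fine estimator, while the refined part is trivially controlled by the refinement set $\RR_{\coarse,\fine}$.
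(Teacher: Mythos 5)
Your proposal is correct and follows essentially the same route as the paper's proof: split $\widetilde\eta_\coarse^{\,2}$ over $\NNnr_{\coarse,\fine}$ and $\NNr_{\coarse,\fine}$, control the non-refined part via (E1) with Young's inequality and the assumption $\widetilde\eta_\fine^{\,2}\le \ro{\widetilde\theta}\,\widetilde\eta_\coarse^{\,2}$, absorb the perturbation with (E3), use $\NNr_{\coarse,\fine}\subseteq\RR_{\coarse,\fine}$ for the refined part, and then fix $\delta$ and $\ro{\widetilde\theta}$. The only difference is the cosmetic relabeling $\delta\leftrightarrow\delta^{-1}$ in the Young inequality, so the two arguments coincide.
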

 \begin{proof}
 Throughout the proof, we work with a free variable $\ro{\widetilde\theta}>0$, which will be fixed at the end.
For all $\delta>0$, the Young inequality together with stability (E1) shows  that
 \begin{align*}
 \widetilde\eta_\coarse^{\,2}=\widetilde\eta_\coarse(\NNr_{\coarse,\fine})^2 +\widetilde\eta_\coarse(\NNnr_{\coarse,\fine})^2\le \widetilde\eta_\coarse(\NNr_{\coarse,\fine})^2+(1+\delta^{-1}) \widetilde\eta_\fine(\NNnr_{\coarse,\fine})^2+(1+\delta)\Cstab^2\,\varrho_{\coarse,\fine}^2.
 \end{align*}
With $\RR_{\coarse,\fine}\supseteq\NNr_{\coarse,\fine}$, we get for the first term on the right-hand side that 
$\widetilde\eta_\coarse(\NNr_{\coarse,\fine})^2\le \widetilde\eta_\coarse(\RR_{\coarse,\fine})^2$.
The assumption  \eqref{eq:Doerfler optimal} proves that $\widetilde\eta_\fine(\NNnr_{\coarse,\fine})^2\le \widetilde\eta_\fine^{\,2}\le \ro{\widetilde\theta}\,\widetilde\eta_\coarse^{\,2}$.
Together with discrete reliability (E3), we obtain that 
\begin{align*}
\widetilde\eta_\coarse^{\,2}\le \widetilde\eta_\coarse(\RR_{\coarse,\fine})^2 +(1+\delta^{-1}) \ro{\widetilde\theta}\,\widetilde\eta_\coarse^{\,2}+(1+\delta)\Cstab^2\const{drel}^2\widetilde\eta_\coarse(\RR_{\coarse,\fine})^2.
\end{align*}
Put differently, we end up with
\begin{align*}
\frac{1-(1+\delta^{-1})\ro{\widetilde\theta}}{1+(1+\delta)\const{stab}^2\const{drel}^2}\widetilde\eta_\coarse^{\,2}\le \widetilde\eta_\coarse(\RR_{\coarse,\fine})^2.
\end{align*}
Finally, we choose $\delta>0$ and then $0<\ro{\widetilde\theta}<1$ such that 
\begin{align*}
\widetilde\theta\le \frac{1-(1+\delta^{-1})\ro{\widetilde\theta}}{1+(1+\delta)\const{stab}^2\const{stab}^2\const{drel}^2}<\frac{1}{1+\const{drel}^2}=\widetilde\theta_{\rm opt}.
\end{align*}
This concludes the proof.
\end{proof}

In the following lemma, we show that the estimator is monotone up to some multiplicative constant.
Again, the proof follows along the lines of the version from \cite[Lemma~3.5]{axioms}. 
 \begin{lemma}\label{lem:quasi-monotonicity}
Suppose {\rm (E1)--(E3)}, where the restriction $\ro{red}<1$ is not necessary.
Then, there exists a constant $\const{mon}\ge1$ such that there holds quasi-monotonicity in the sense that 
\begin{align}\label{eq:quasi-monotonicity}
\widetilde\eta_\fine^{\,2}\le \const{mon} \widetilde\eta_\coarse^{\,2} \quad\text{for all } \KK_\coarse\in\K, \KK_\fine\in\refine(\KK_\coarse).
\end{align}
The constant $\const{mon}$ depends only on $\const{stab},\const{red}$, $\ro{red}$, and $\const{drel}$.
 \end{lemma}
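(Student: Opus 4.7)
The plan is to prove quasi-monotonicity by combining the three estimator axioms (E1), (E2), (E3) via the standard bootstrapping argument used in the axiomatic framework of \cite{axioms}, carefully tracking where the hypothesis $\rhored<1$ would normally enter so as to avoid it.

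First, I would split the fine-level estimator along the partition $\NN_\fine = \NNnr_{\coarse,\fine} \cup \NNr_{\fine,\coarse}$, writing
\begin{align*}
\widetilde\eta_\fine^{\,2} = \widetilde\eta_\fine(\NNnr_{\coarse,\fine})^{\,2} + \widetilde\eta_\fine(\NNr_{\fine,\coarse})^{\,2}.
\end{align*}
On the non-refined patches I apply stability (E1) together with the Young inequality in the plain form $(a+b)^2 \le 2a^2+2b^2$ to obtain
\begin{align*}
\widetilde\eta_\fine(\NNnr_{\coarse,\fine})^{\,2} \le 2\,\widetilde\eta_\coarse(\NNnr_{\coarse,\fine})^{\,2} + 2\,\Cstab^2\,\varrho_{\coarse,\fine}^2.
\end{align*}
On the refined patches I apply reduction (E2) verbatim,
\begin{align*}
\widetilde\eta_\fine(\NNr_{\fine,\coarse})^{\,2} \le \rhored\,\widetilde\eta_\coarse(\NNr_{\coarse,\fine})^{\,2} + \Cred^2\,\varrho_{\coarse,\fine}^2,
\end{align*}
noting that this inequality makes sense regardless of whether $\rhored<1$.

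Adding the two contributions and bounding $\widetilde\eta_\coarse(\NNnr_{\coarse,\fine})^2 + \widetilde\eta_\coarse(\NNr_{\coarse,\fine})^2 = \widetilde\eta_\coarse^{\,2}$ by using $\max\{2,\rhored\}$ as a crude common factor, I arrive at
\begin{align*}
\widetilde\eta_\fine^{\,2} \le \max\{2,\rhored\}\,\widetilde\eta_\coarse^{\,2} + (2\Cstab^2 + \Cred^2)\,\varrho_{\coarse,\fine}^2.
\end{align*}
The remaining perturbation term is then eliminated by discrete reliability (E3), which yields $\varrho_{\coarse,\fine}^2 \le \Cdrel^2\,\widetilde\eta_\coarse(\RR_{\coarse,\fine})^{\,2} \le \Cdrel^2\,\widetilde\eta_\coarse^{\,2}$. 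Hence
\begin{align*}
\widetilde\eta_\fine^{\,2} \le \big(\max\{2,\rhored\} + (2\Cstab^2 + \Cred^2)\,\Cdrel^2\big)\,\widetilde\eta_\coarse^{\,2},
\end{align*}
which establishes \eqref{eq:quasi-monotonicity} with $\const{mon} := \max\{2,\rhored\} + (2\Cstab^2 + \Cred^2)\,\Cdrel^2$, depending exactly on the claimed quantities.

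There is no real obstacle here: the only subtle point is to avoid the Young inequality in the standard form $(1+\delta)a^2 + (1+\delta^{-1})b^2$ with a $\delta$ tuned to exploit $\rhored<1$ (which would be needed for estimator \emph{reduction} as in \eqref{eq:estimator reduction}), and instead use the worse but unconditional factor $2$. Since quasi-monotonicity only asserts a one-step bound with a possibly large constant, this coarser splitting suffices and, importantly, frees the argument from the hypothesis $\rhored<1$, as required by the statement.
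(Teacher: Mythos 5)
Your proof is correct and follows essentially the same route as the paper: split $\widetilde\eta_\fine^{\,2}$ over $\NNnr_{\coarse,\fine}$ and $\NNr_{\fine,\coarse}$, apply (E1) with Young's inequality and (E2), then absorb the perturbation $\varrho_{\coarse,\fine}^2$ via (E3) and $\widetilde\eta_\coarse(\RR_{\coarse,\fine})\le\widetilde\eta_\coarse$. The only cosmetic difference is that you fix the Young parameter to $\delta=1$ (factor $2$), whereas the paper keeps a generic $\delta>0$; since no smallness of the resulting factor is needed for quasi-monotonicity, both yield a valid constant depending only on $\const{stab},\const{red},\q{red},\const{drel}$.
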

 \begin{proof}
 We split the estimator and apply Young's inequality in combination with (E1)--(E2).
For all $\delta>0$, we see that
 \begin{align*}
 \widetilde\eta_\fine^{\,2}=\widetilde\eta_\fine(\NNnr_{\coarse,\fine})^2+\widetilde\eta_\fine(\NNr_{\fine,\coarse})^2&\le (1+\delta)\widetilde\eta_\coarse(\NNnr_{\coarse,\fine})^2+\ro{red}\widetilde\eta_\coarse(\NNr_{\coarse,\fine})^2+(\const{stab}^2+\const{red}(1+\delta^{-1}))\varrho_{\coarse,\fine}^2\\
 &\le \max\{1+\delta,\ro{red}\} \widetilde\eta_\coarse^{\,2}+(\const{stab}^2+\const{red}(1+\delta^{-1}))\varrho_{\coarse,\fine}^2.
 \end{align*}
 The application of (E3) yields that 
 \begin{align*}
 \widetilde\eta_\fine^{\,2}&\le  \max\{1+\delta,\ro{red}\}\widetilde\eta_\coarse^{\,2}+(\const{stab}^2+\const{red}(1+\delta^{-1}))\const{drel}^2\widetilde\eta_\coarse(\RR_{\coarse,\fine})^2\\
 &\le\big(\max\{1+\delta,\ro{red}\}+(\const{stab}^2+\const{red}(1+\delta^{-1}))\const{drel}^2\big)\widetilde\eta_\coarse^{\,2}.
 \end{align*}
This concludes the proof.
 \end{proof}

 The next lemma provides the key ingredient for the proof of optimal convergence rates.
 Again, the proof follows along the lines of \cite[Lemma~4.14]{axioms}. 

 \begin{lemma}\label{lem:optimality}
Suppose the overlay property {\rm (R3)} and quasi-monotonicity \eqref{eq:quasi-monotonicity}.  
Let $\ell\in\N_0$ such that $\widetilde\eta_\ell>0$ and let $0<\ro{}<1$. 
Let $s>0$ with $\norm{u}{\widetilde{\mathbb{A}}_s}:=\sup_{N\in\N_0} \big( (N+1)^s\inf_{\KK_\coarse\in\K(N)} \widetilde\eta_\coarse\big)<\infty$.
	Then, there exists a refinement $\KK_{\fine}\in\refine(\KK_\ell)$ with
	\begin{subequations}\label{eq:lemoptresult}
		\begin{align}
		\label{eq:lemoptresult1}
		\widetilde\eta_{\fine}^{\,2} &\leq  \ro{} \,\widetilde\eta_\ell^{\,2},
		\\
				\label{eq:lemoptresult2}
		\#_\fine\NN_\fine-\#_\ell\NN_\ell &<\const{mon}^{1/(2s)}\norm{u}{\widetilde{\mathbb{A}}_s}^{1/s}\,\ro{}^{-1/(2s)}\widetilde\eta_\ell^{\,-1/s}.
		\end{align}
		\end{subequations} 
\end{lemma}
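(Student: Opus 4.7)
The plan is to combine the overlay property (R3) with the near-best approximability granted by $\norm{u}{\widetilde{\mathbb{A}}_s}<\infty$ and quasi-monotonicity \eqref{eq:quasi-monotonicity}, following the standard ``best approximant + overlay + quasi-monotonicity'' scheme. The strategy is to pick, from the finite family $\K(N)$, a near-best knot vector that already realises the target estimator value, and then merge it with $\KK_\ell$ via overlay, paying only the cardinality of the approximant (reduced by $\#_0\NN_0$) in addition to the original nodes of $\KK_\ell$.

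Concretely, I will pick $N\in\N_0$ as the smallest integer satisfying
\begin{align*}
(N+1)^{-s}\,\norm{u}{\widetilde{\mathbb{A}}_s}\le(\rho/\const{mon})^{1/2}\,\widetilde\eta_\ell.
\end{align*}
Such an $N$ exists because $\widetilde\eta_\ell>0$ while the left-hand side tends to $0$. If $N\ge 1$, then $N-1$ violates the inequality, so $N^{-s}\norm{u}{\widetilde{\mathbb{A}}_s}>(\rho/\const{mon})^{1/2}\widetilde\eta_\ell$ and therefore
\begin{align*}
N<\const{mon}^{1/(2s)}\,\norm{u}{\widetilde{\mathbb{A}}_s}^{1/s}\,\rho^{-1/(2s)}\,\widetilde\eta_\ell^{\,-1/s};
\end{align*}
the case $N=0$ is trivial since the right-hand side is positive. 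Because $\K(N)$ is finite, the infimum in the definition of $\norm{u}{\widetilde{\mathbb{A}}_s}$ is attained, so there exists $\KK_\star\in\K(N)$ with $\widetilde\eta_\star\le(N+1)^{-s}\norm{u}{\widetilde{\mathbb{A}}_s}$.

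Finally, I will invoke the overlay property (R3) to obtain a common refinement $\KK_\fine\in\refine(\KK_\ell)\cap\refine(\KK_\star)$ with $\#_\fine\NN_\fine\le\#_\ell\NN_\ell+\#_\star\NN_\star-\#_0\NN_0$. Since $\KK_\star\in\K(N)$ entails $\#_\star\NN_\star-\#_0\NN_0\le N$, this immediately gives $\#_\fine\NN_\fine-\#_\ell\NN_\ell\le N$, which together with the bound on $N$ yields \eqref{eq:lemoptresult2}. Moreover, since $\KK_\fine\in\refine(\KK_\star)$, quasi-monotonicity \eqref{eq:quasi-monotonicity} applied to the pair $(\KK_\star,\KK_\fine)$ gives
\begin{align*}
\widetilde\eta_\fine^{\,2}\le\const{mon}\,\widetilde\eta_\star^{\,2}\le\const{mon}\,(N+1)^{-2s}\,\norm{u}{\widetilde{\mathbb{A}}_s}^2\le\rho\,\widetilde\eta_\ell^{\,2},
\end{align*}
which is \eqref{eq:lemoptresult1}.

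I do not anticipate any real obstacle: all ingredients (overlay, quasi-monotonicity, finiteness of $\K(N)$) are already at our disposal. The one subtle point worth emphasising is that the overlay bound measures the cost of $\KK_\star$ as $\#_\star\NN_\star-\#_0\NN_0$ rather than $\#_\star\NN_\star$; this is precisely what lets \eqref{eq:lemoptresult2} be stated with a right-hand side depending only on $N$ and not on the (potentially much larger) cardinality $\#_\ell\NN_\ell$, and it is the only reason the subsequent optimal-rate argument can close.
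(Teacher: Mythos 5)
Your proposal is correct and follows essentially the same route as the paper: choose the minimal $N$ with $(N+1)^{-s}\norm{u}{\widetilde{\mathbb{A}}_s}\le(\rho/\const{mon})^{1/2}\widetilde\eta_\ell$, take a best approximant in the finite set $\K(N)$, overlay it with $\KK_\ell$ via (R3), and conclude with quasi-monotonicity \eqref{eq:quasi-monotonicity}. The only cosmetic difference is your handling of $N=0$ (where positivity of the right-hand side of \eqref{eq:lemoptresult2} implicitly uses $\norm{u}{\widetilde{\mathbb{A}}_s}>0$, which follows from $\widetilde\eta_\ell>0$ and quasi-monotonicity), whereas the paper instead rules out $N=0$ directly; both are fine.
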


\begin{proof}
We prove the assertion in two steps.

\noindent\textbf{Step 1:} 
We show  a modified \eqref{eq:lemoptresult} for some $\KK_\coarse\in\K$ instead of a refinement $\KK_\fine\in\refine(\KK_\coarse)$, i.e., we prove 
that
\begin{subequations}
		\begin{align}		
		\label{eq:lemoptresult3}
		\widetilde\eta_{\coarse}^{\,2} &\leq  {(\ro{}}/\const{mon}) \,\widetilde\eta_\ell^{\,2},
		\\
		\label{eq:lemoptresult4}
		\#_\coarse\NN_{\coarse}-\#_0\NN_0 &<\norm{u}{\widetilde{\mathbb{A}}_s}^{1/s}\,(\ro{}/\const{mon})^{-1/(2s)}\widetilde\eta_\ell^{\,-1/s}.
		\end{align}
				\end{subequations}
Let $N\in\N_0$ be minimal such that $\norm{u}{\widetilde{\mathbb{A}}_s}(N+1)^{-s} \le {(\ro{}/\const{mon})}^{\,1/2}\widetilde\eta_\ell$.
	Note that $N>0$ by the fact that $\widetilde\eta_\ell\leq\const{mon}^{1/2}\widetilde\eta_0\le\const{mon}^{1/2}\norm{u}{\widetilde{\mathbb{A}}_s}$ and $0<q<1$.
	Hence, minimality of $N$ yields that
${(\ro{}/\const{mon})}^{\,1/2}\widetilde\eta_\ell<\norm{u}{\widetilde{\mathbb{A}}_s}N^{-s}$ and hence
	\begin{align}\label{eq:comparison help}
	N<\norm{u}{\widetilde{\mathbb{A}}_s}{(\ro{}/\const{mon})}^{-1/(2s)}\widetilde\eta_\ell^{\,-1/s}.
	\end{align}
	Next, we choose $\KK_\coarse\in\K(N)$ with $\widetilde\eta_\coarse=\min_{\KK_\coarse\in\K(N)}\widetilde\eta_\coarse$. 
	By definition of $\norm{u}{\widetilde{\mathbb{A}}_s}$ and the choice of $N$, this gives \eqref{eq:lemoptresult3}.
	Moreover,  \eqref{eq:lemoptresult4} follows at from \eqref{eq:comparison help}.

\noindent \textbf{Step 2:} We consider a common refinement $\KK_\fine$ of $\KK_\ell$ and $\KK_\coarse$ as in (R3).
Estimate \eqref{eq:lemoptresult3} and quasi-monotonicity~\eqref{eq:quasi-monotonicity} show \eqref{eq:lemoptresult1}.
Moreover, (R3) and 		\eqref{eq:lemoptresult4} prove that 
\begin{align*}
\#_\fine\NN_\fine-\#_\ell\NN_\ell\le \#_\coarse\NN_\coarse-\#_0\NN_0
\le 
\norm{u}{\widetilde{\mathbb{A}}_s}^{1/s}\,(\ro{}/\const{mon})^{-1/(2s)}\widetilde\eta_\ell^{\,-1/s},
\end{align*}
which is just \eqref{eq:lemoptresult2}.
	\end{proof}

We finally have the means to prove optimal convergence \eqref{eq:optimal}. 

\begin{proof}[Proof of \eqref{eq:optimal}]
We prove the assertion in two steps.

\noindent \textbf{Step 1:}
We show that $0<\theta<\theta_{\rm opt}:=\const{eq}^{-2}\widetilde\theta_{\rm opt}=\const{eq}^{-2}(1+\const{stab}^2\const{drel}^2)^{-1}$ implies that
\begin{align}
\sup_{\ell\in\N_0}{(\#_\ell\NN_\ell-\#_0\NN_0+1)^{s}}{\widetilde\eta_\ell}\le\widetilde C_{\rm opt} \norm{u}{\widetilde{\mathbb{A}}_s}
\end{align}
for some constant $\widetilde C_{\rm opt}>0$.
Clearly, with the equivalence \eqref{eq:abstract equivalence}, this immediately gives \eqref{eq:optimal}.
Without loss of generality, we assume that $\norm{u}{\widetilde{\mathbb{A}}_s}<\infty$.
If $\widetilde\eta_{\ell_0}=0$ for some $\ell_0\in\N_0$, then, 
Algorithm~\ref{the algorithm} implies that $\widetilde\eta_\ell=0$ for all $\ell\geq \ell_0$.
Moreover, $(\#_0\NN_0-\#_0\NN_0+1)^s\widetilde\eta_0\le \norm{u}{\widetilde{\mathbb{A}}_s}$ is trivially satisfied.
Thus, it is sufficient to consider $0<\ell< \ell_0$ resp.\ $0<\ell$ if no such $\ell_0$ exists.
Now, let $k< \ell$ and define $\widetilde\theta:=\const{eq}^{2}\theta$.
According to Lemma~\ref{lem:quasi-monotonicity}, we may apply Lemma~\ref{lem:optimality} for  $\KK_k$, where we choose $\ro{\widetilde\theta}$ as in Proposition~\ref{prop:Doerfler optimal}.
In particular, \eqref{eq:Doerfler optimal} in combination with \eqref{eq:lemoptresult1} shows that $\RR_{k,\fine}$ satisfies the D\"orfler marking $\widetilde\theta\widetilde\eta_k^{\,2}\le\widetilde\eta_k(\RR_{k,\fine})^2$ and hence $\theta\eta_k^2\le\eta_k(\RR_{k,\fine})^2$.
Since, $\MM_k$ is an essentially minimal set satisfying D\"orfler marking (see Remark~\ref{rem:feature} {\rm (a)}) we get that
$|\MM_k|\lesssim|\RR_{k,\fine}|$.
Since the maximal multiplicity is bounded, we see that  
\begin{align*}
\#_k\MM_k\lesssim\#_k\RR_{k,\fine}\stackrel{\rm (E3)}\lesssim
	 \#_\fine\NN_\fine-\#_k\NN_k
\stackrel{\eqref{eq:lemoptresult2}}\lesssim
\norm{u}{\widetilde{\mathbb{A}}_s}^{1/s}\,
	  \widetilde\eta_k^{\,-1/s}.
	\end{align*}
For $\ell>0$, the closure estimate (R2) proves that
	\begin{align*}
	\begin{split}
	\#_\ell\NN_\ell-\#_0\NN_0+1\le 2(\#_\ell\NN_\ell-\#_0\NN_0)
	\leq 2\const{clos} \sum_{k=0}^{\ell-1}\#_k\MM_k
	\leq 
	\norm{u}{\widetilde{\mathbb{A}}_s}^{1/s}
	\sum_{k=0}^{\ell-1}\widetilde\eta_k^{\,-1/s}.
	\end{split}
	\end{align*}
	Finally, linear convergence of $\eta_k$ \eqref{eq:R-linear} and thus of $\widetilde \eta_k$  and elementary analysis show that the term $\sum_{k=0}^{\ell-1}\widetilde\eta_k^{\,-1/s}$ can be bounded from above by $C\widetilde\eta_\ell^{\,-1/s}$ where $C>0$ depends only on $\ro{lin}, \const{lin}$, and $s$.
Therefore,	we end up with 
	\begin{align*}
	(\#_\ell\NN_\ell-\#_0\NN_0+1)^{s}\widetilde\eta_\ell\lesssim 
\norm{u}{\widetilde{\mathbb{A}}_s}\quad\text{for all }\ell>0.
	\end{align*}
For $\ell=0$, the latter estimate is trivially satisfied. This concludes the proof.

\noindent \textbf{Step 2:}
To see the lower bound in \eqref{eq:optimal}, let $N\in\N_0$ and choose the maximal $\ell\in\N_0$ such that $\#_\ell\NN_\ell-\#_0\NN_0\le N$.
Due to the maximality of $\ell$ and the son estimate (R1), we have that 
$N+1\le\#_{\ell+1}\NN_{\ell+1}-\#_0\NN_0
\le \const{son}\#_\ell\NN_\ell-\#_0\NN_0
\simeq \#_\ell\NN_\ell-\#_0\NN_0+1$,
where the hidden constants depend only on $\#_0\NN_0$.
This leads to
\begin{align*}
\inf_{\KK_\coarse\in\K(N)} (N+1)^s\eta_\coarse\lesssim (\#_\ell\NN_\ell-\#_0\NN_0+1)^s\eta_\ell
\end{align*}
and concludes the proof.

\end{proof}

\subsection{Approximability constants satisfy (\ref{eq:classes})}
\label{sec:classes}
The second inequality in \eqref{eq:classes} is trivially satisfied by definition of the approximability constants and the fact that $\K^1\cup\K^p\subseteq\K$. 

For the first inequality, we call Algorithm~\ref{the algorithm} with parameters as in Remark~\ref{rem:feature}~(c) such that only $h$-refinement takes place.
\eqref{eq:optimal} gives that
$\norm{u}{\mathbb{A}_s}$ $\simeq$ $\sup_{\ell\in\N_0}{(\#_\ell\NN_\ell-\#_0\NN_0+1)^{s}}{\eta_\ell}$. 
Since $\KK_\ell\in\K^1$ for all $\ell\in\N_0$, we can argue along the lines of Step~2 of the proof of~\eqref{eq:optimal} to see that $\norm{u}{\mathbb{A}_s^1}\lesssim\sup_{\ell\in\N_0}{(\#_\ell\NN_\ell-\#_0\NN_0+1)^{s}}{\eta_\ell}$.

For the third inequality in \eqref{eq:classes}, we note the elementary equivalence for arbitrary fixed constants $C>0$
\begin{align*}
\norm{u}{\mathbb{A}^p_s}=\sup_{N\in\N_0}\inf_{\KK_\coarse\in\K^p(N)} (N+1)^s \eta_{\coarse}\simeq\sup_{N\in\N_0}\inf_{\KK_\coarse\in\K^p(CN)}(N+1)^s \eta_{\coarse}.
\end{align*}
To conclude the proof of \eqref{eq:classes}, it thus remains to show that 
\begin{align*}
\sup_{N\in\N_0}\inf_{\KK_\coarse\in\K^p(CN)} \eta_{\coarse}\lesssim\sup_{N\in\N_0} \inf_{\KK_\coarse\in\K^1(N)} \eta_{\coarse}
\end{align*}
for some generic constant $C>0$.
Let $N\in\N_0$. To verify the latter inequality, let   $\KK_{\coarse,1}\in\K^1(N)$.
Moreover, let $\KK_{\coarse,p}$ be the corresponding knots in $\K^p$ with $\NN_{\coarse,p}=\NN_{\coarse,1}$.
Recall the initial knots $\KK_{0,p}$ with maximal multiplicity $p$ from \eqref{eq:Ks}. 
If $\KK_{\coarse,1}\neq\KK_0$ and thus  $\#_{\coarse,1}\NN_{\coarse,1}>\#_0\NN_0$,  there exists a constant $C>0$ only depending on $p$ and $|\NN_0|$, such that
\begin{align*}
\#_{\coarse,p}\NN_{\coarse,p}-\#_{0,p}\NN_{0,p}
= p(|\NN_{\coarse,1}|-|\NN_0|)
\le p( \#_{\coarse,1}\NN_{\coarse,1}-|\NN_0|)
\le C( \#_{\coarse,1}\NN_{\coarse,1}-\#_0\NN_0) \le C N,
\end{align*}
which yields that $\KK_{\coarse,1}\in\K^1(N)$. 
To conclude the proof, it is thus remains to show that $\eta_{\coarse,p}\lesssim \eta_{\coarse,1}$. 
Since $\phi_{\coarse,p}=\phi_{\coarse,1}$, we have that ${\rm osc}_{\coarse,p}={\rm osc}_{\coarse,1}$. 
For the residual term, we note that $h_{\coarse,p}=h_{\coarse,1}$ and $g_{\coarse,p}=g_{\coarse,1}$.
The triangle inequality gives that
\begin{align*}
{\rm res}_{\coarse,p}=\norm{h_{\coarse,p}^{1/2}(g_{\coarse,p}-\mathfrak{W}U_{\coarse,p})}{L^2(\Gamma)}
&\le
\norm{h_{\coarse,1}^{1/2}(g_{\coarse,1}-\mathfrak{W}U_{\coarse,1})}{L^2(\Gamma)}
+\norm{h_{\coarse,p}^{1/2}\mathfrak{W}(U_{\coarse,p}-U_{\coarse,1})}{L^2(\Gamma)}
\\
&={\rm res}_{\coarse,1}
+\norm{h_{\coarse,p}^{1/2}\mathfrak{W}(U_{\coarse,p}-U_{\coarse,1})}{L^2(\Gamma)}.
\end{align*}
To estimate the second summand, we note that $\KK_{\coarse,p}\in\refine(\KK_{\coarse,1})$.
Therefore, we can use the inverse inequalities \eqref{eq:inverse W} and \eqref{eq:discrete invest} and discrete reliability (E3) to see that
\begin{align*}
\norm{h_{\coarse,p}^{1/2}\mathfrak{W}(U_{\coarse,p}-U_{\coarse,1})}{L^2(\Gamma)}
&\stackrel{\eqref{eq:inverse W}}\lesssim
\norm{U_{\coarse,p}-U_{\coarse,1}}{H^{1/2}(\Gamma)} + \norm{h_\coarse^{1/2}\partial_\Gamma (U_{\coarse,p}-U_{\coarse,1})}{L^2(\Gamma)}\\
&\stackrel{\eqref{eq:discrete invest}}\lesssim 
\norm{U_{\coarse,p}-U_{\coarse,1}}{H^{1/2}(\Gamma)}
\stackrel{\rm (E3)}\lesssim
\eta_{\coarse,1}. 
\end{align*}
which concludes the proof.

\section{Numerical experiments}
\label{section:numerics}

In this section, we empirically investigate the performance of Algorithm~\ref{the algorithm} on the geometries $\Omega$ from Figure~\ref{fig:geometries}. 
Their boundaries $\Gamma$ can be parametrized via rational splines of degree~$2$, i.e., there exists a $2$-open knot vector $\widehat\KK_\gamma$ on $[0,1]$, and positive weights $\mathcal{W}_\gamma$ such that the components of $(\gamma_1,\gamma_2)=\gamma:[0,1]\to \Gamma$ satisfy that 
\begin{align}
\gamma_1,\gamma_2\in\widehat{\mathcal{S}}^{\,2}(\widehat\KK_\gamma,\mathcal{W}_\gamma);
\end{align}
see \cite[Section~5.9]{gantner17} for details.
On the pacman geometry, we prescribe an exact solution $P$ of the Laplace problem as 
\begin{equation}
P(x_1,x_2):=r^{\tau}\cos\left(\tau\beta\right)
\end{equation}
in polar coordinates $(x_1,x_2)=r(\cos \beta,\sin \beta)$ with $\beta\in(-\pi,\pi)$.
Similarly, we prescribe 
\begin{equation}
P(x_1,x_2):=r^{2/3}\cos\left(\frac{2}{3}(\beta+\pi/2)\right)
\end{equation}
in polar coordinates $(x_1,x_2)=r(\cos \beta,\sin \beta)$ with $\beta\in (-3\pi/2,\pi/2)$ on the heart-shaped domain.
Figure~\ref{fig:solutions} shows the corresponding Dirichlet data $u=P|_\Gamma$ as well as  Neumann data $\phi=\partial P/\partial\nu$.
In each case, the latter have a generic singularity at the origin. 
It is well-known that the boundary data satisfy the hyper-singular integral equation \eqref{eq:hyper strong} as well as the weakly-singular integral equation \eqref{eq:weak strong}; see Appendix~\ref{sec:weaksing} for details on the latter. 
In the following Sections~\ref{sec:hyper pacman}--\ref{sec:weak heart}, we aim to numerically solve these boundary integral equations. 

\begin{figure}
\label{fig:geometries}
\psfrag{Pacman}[c][c]{}
\psfrag{Heart}[c][c]{}
\centering
	\includegraphics[width=.475\textwidth,clip=true]{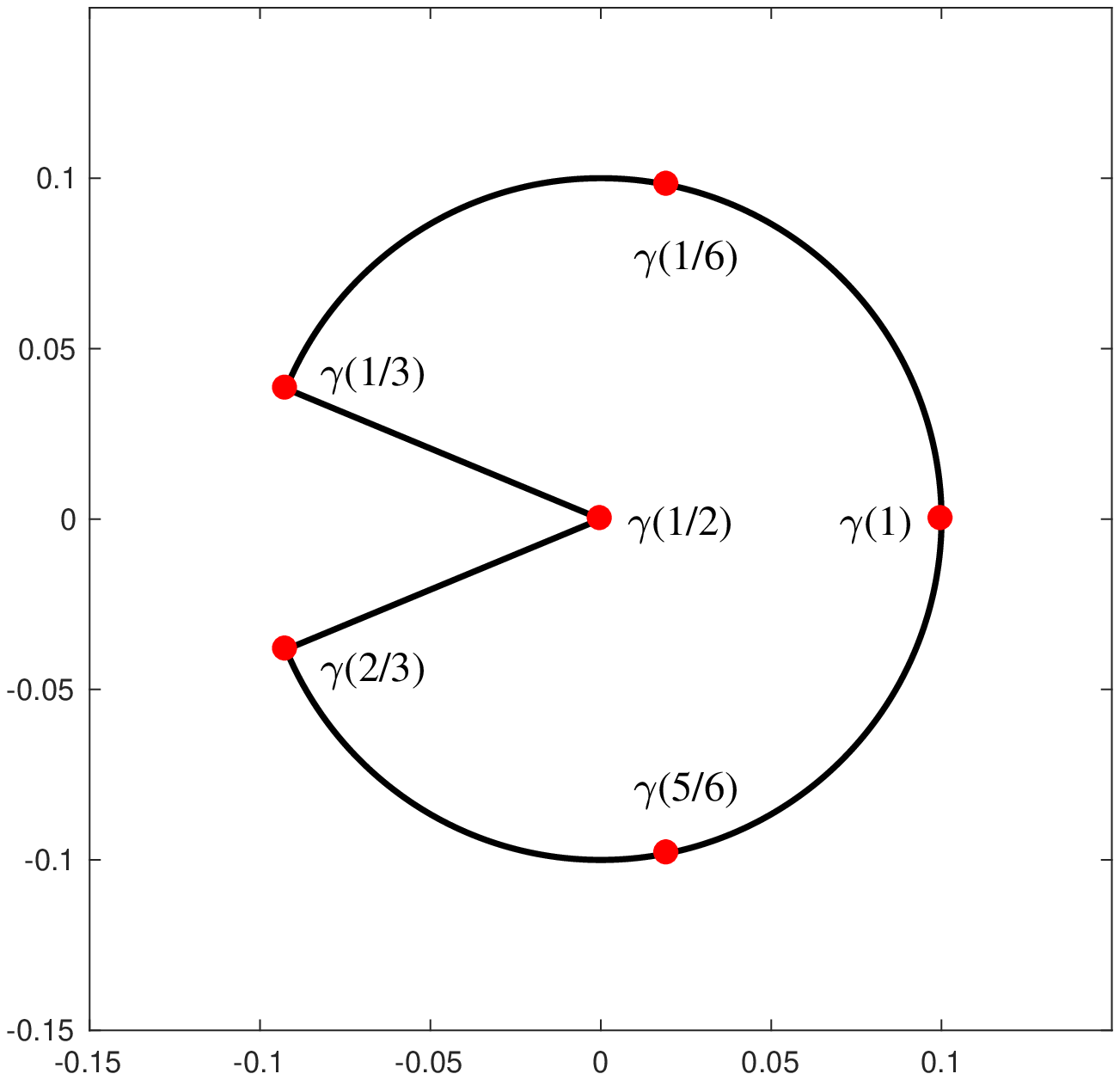}\quad
	\includegraphics[width=.475\textwidth,clip=true]{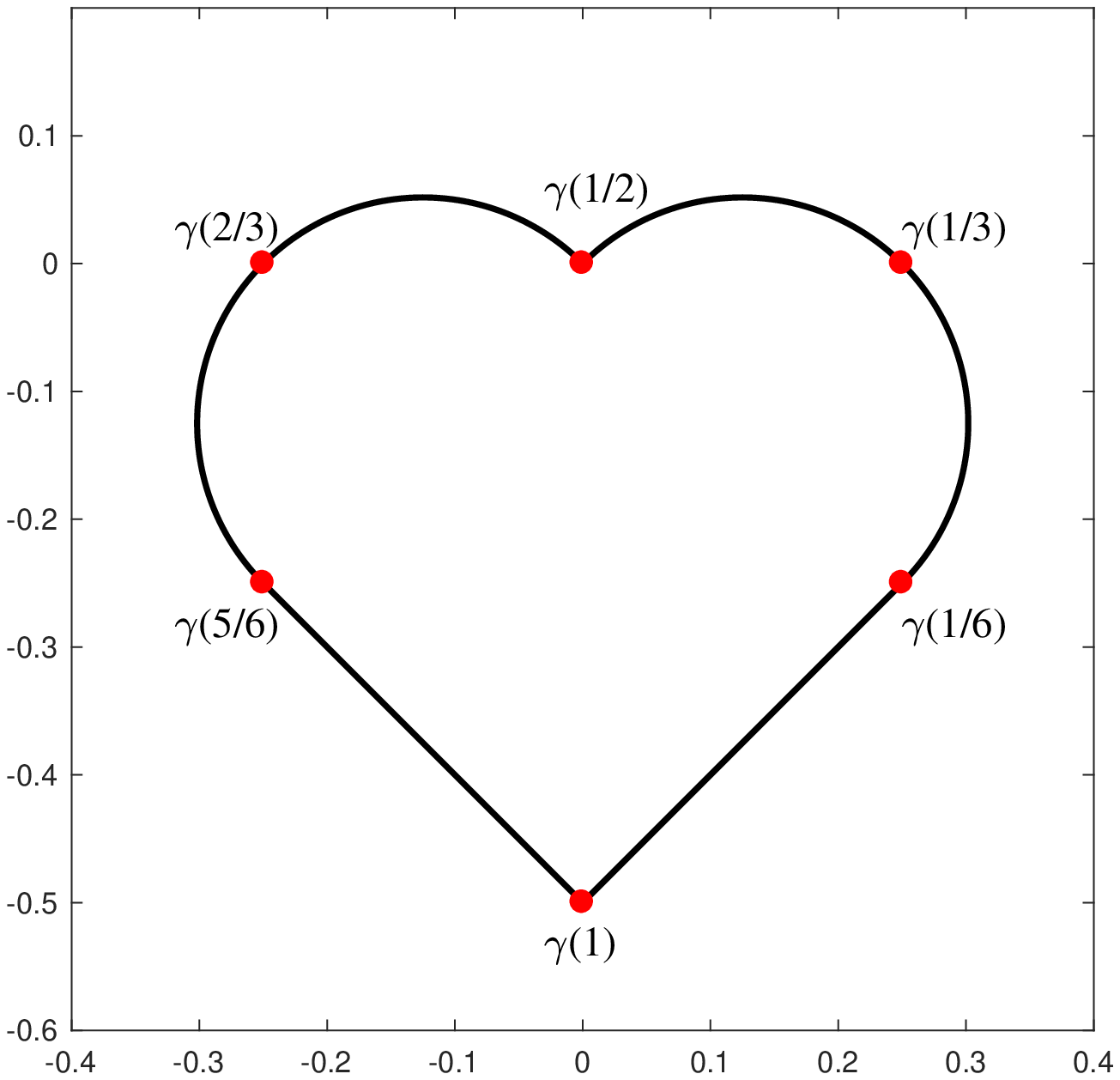}
	\caption{Geometries and initial nodes.}
\end{figure}

\begin{figure}
\label{fig:pacman solutions}
\psfrag{parameter domain}[c][c]{\small parameter domain}
\psfrag{Dirichlet data pacman}[c][c]{\small Dirichlet data $u$ on pacman}
\psfrag{Neumann data pacman}[c][c]{\small Neumann data $\phi$ on pacman}
\psfrag{Dirichlet data heart}[c][c]{\small Dirichlet data $u$ on heart}
\psfrag{Neumann data heart}[c][c]{\small Neumann data $\phi$ on heart}

\centering
	\includegraphics[width=.475\textwidth,clip=true]{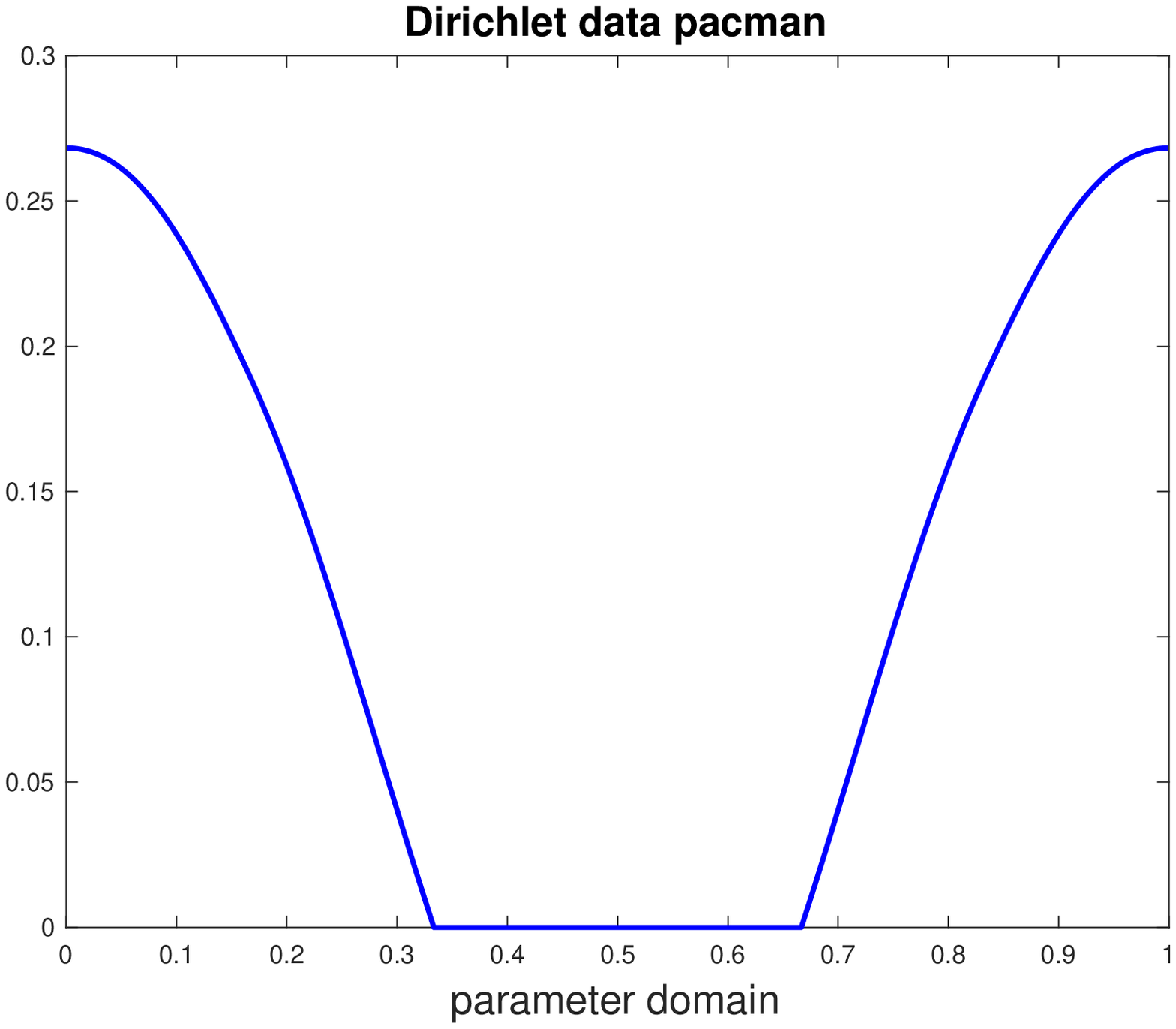}\quad
	\includegraphics[width=.475\textwidth,clip=true]{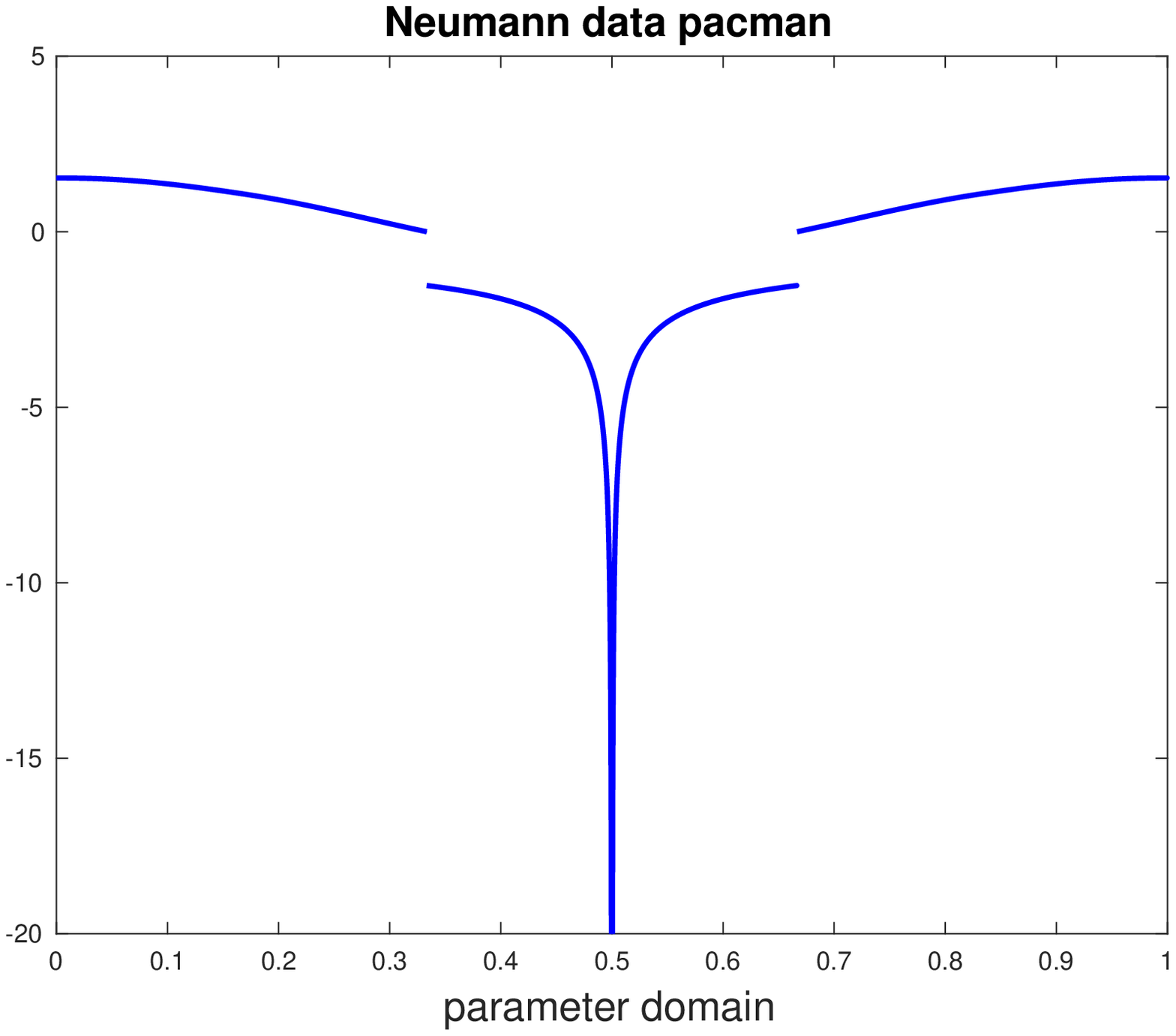}\\
	\hspace{2mm}
	
	\includegraphics[width=.475\textwidth,clip=true]{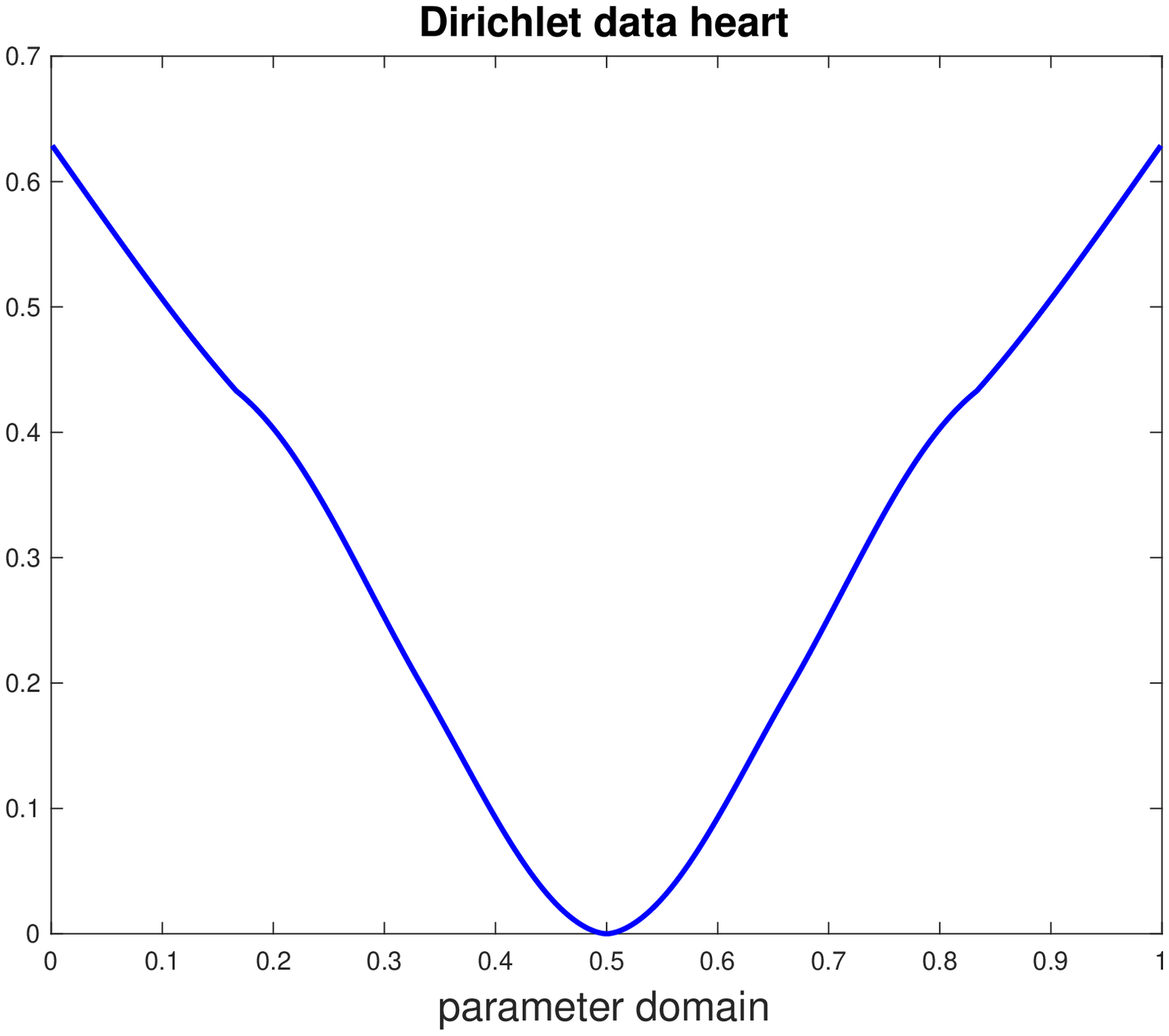}\quad
	\includegraphics[width=.475\textwidth,clip=true]{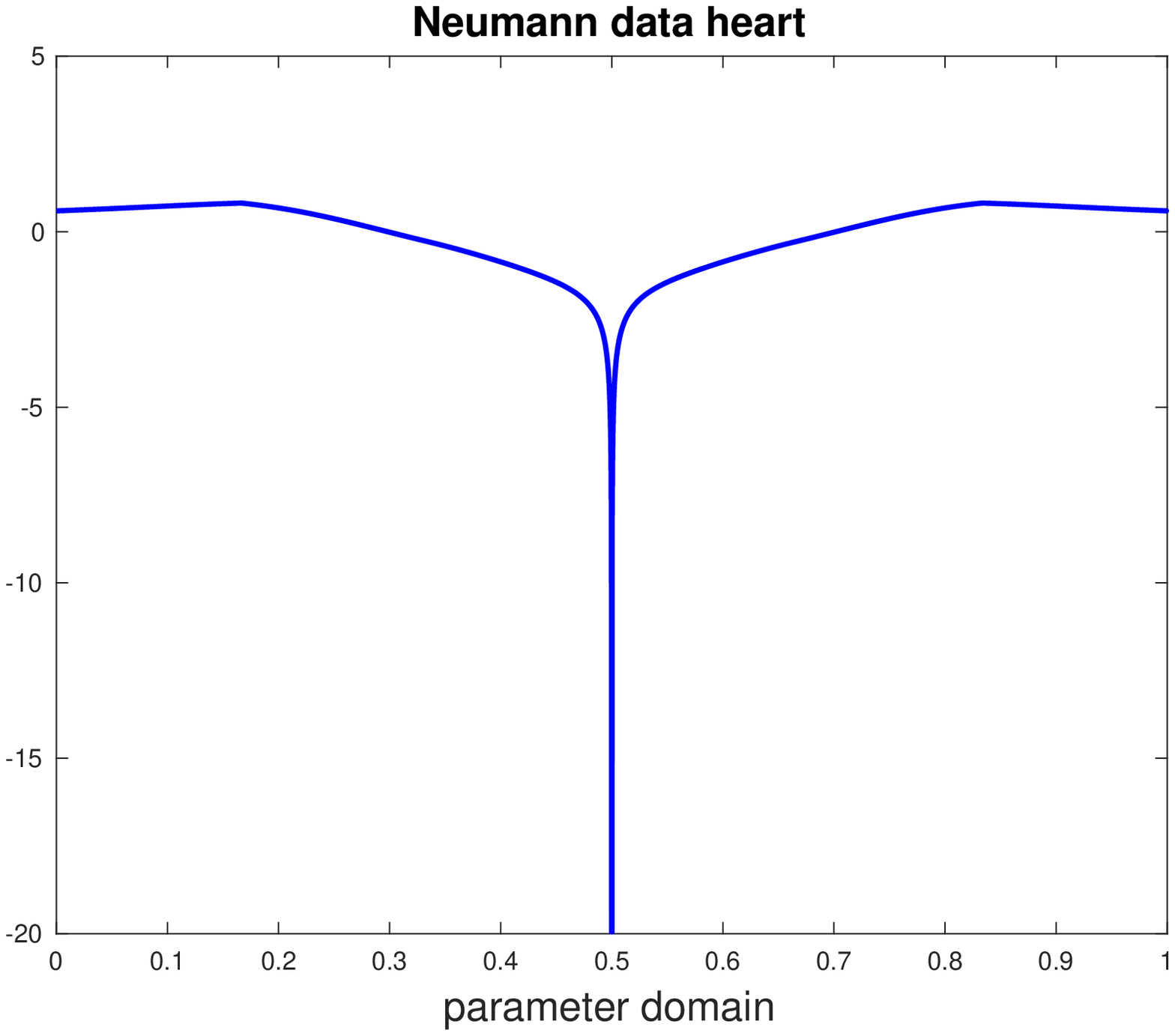}
	\caption{Dirichlet and Neumann data of the Laplace solutions $P$ from Section~\ref{section:numerics} plotted over the parameter domain of the respective parametrization $\gamma:[0,1]\to\Gamma$.}
        \label{fig:solutions}
\end{figure}

For the discretization of the boundary integral equations, we employ (transformed) splines of degree $p=2$. 
Based on the knots $\widehat\KK_\gamma$ for the geometry, we choose the initial knots $\widehat\KK_0$ for the discretization  such that the corresponding nodes coincide, i.e., $\widehat\NN_0=\widehat \NN_\gamma$.
Moreover, we assume that all interior knots of $\widehat\KK_0$ have multiplicity $1$ so that Algorithm~\ref{the algorithm} can decide, where higher knot multiplicities are required. 
In each case, this gives that
\begin{align*}
\widehat\KK_0:=\Big(0,0,0,\frac{1}{6},\frac{1}{3},\frac{1}{2},\frac{2}{3},\frac{5}{6},1,1,1\Big),\notag\\
\end{align*}
As basis for the considered  ansatz spaces, we use \eqref{eq:hypsing basis} for the hyper-singular  equation and~\eqref{eq:weak space} for the weakly-singular equation.
To (approximately) calculate the Galerkin matrix, the right-hand side vector, and the weighted-residual error  estimators, we transform the singular integrands into a sum of a smooth part and a logarithmically singular part.
Then, we use adapted Gauss quadrature to compute the resulting integrals with appropriate accuracy;
 see \cite[Section 5]{diplomarbeit} and \cite[Section~6]{schimanko} for details.
Finally, we note that for the hyper-singular case, we approximate $\phi$ by its $L^2$-orthogonal projection onto piecewise polynomials as in Section~\ref{section:igabem}.
We empirically found that such an approximation is necessary for the hyper-singular equation due to stability issues of the implementation. 
We do not apply any data approximation for the weakly-singular case. 

For each example,  we choose the parameters of Algorithm~\ref{the algorithm} resp.\ its version for the weakly-singular case of Appendix~\ref{sec:weaksing} as $\theta=0.5$, $\const{min}=1$,  $\vartheta\in\{0,0.1,1\}$, and $\const{mark}=1$.
Recall that $\vartheta=0$ prevents any multiplicity decrease.
For comparison, we also consider uniform refinement with $\theta=1$ and $\vartheta=0$, where we mark all nodes in each step, i.e., $\MM_\ell=\NN_\ell$ for all $\ell\in\N_0$. 
Note that this leads to uniform bisection  of all elements (without knot multiplicity increase).

\subsection{Hyper-singular integral equation on pacman}
\label{sec:hyper pacman}
In Figure~\ref{fig:hyper pacman}, the corresponding error estimators $\eta_\ell$ are plotted.
All values are plotted in a double logarithmic scale such that the experimental convergence rates are visible as the slope of the corresponding curves.
Since the Neumann data, which have to be resolved, lack regularity, uniform refinement regains the suboptimal rate $\mathcal{O}(N^{-4/7})$, whereas adaptive refinement leads to the optimal rate $\mathcal{O}(N^{-1/2-p})=\mathcal{O}(N^{-5/2})$.
In this example, the estimator curves look very similar for all considered $\vartheta$.
For adaptive refinement, Figure~\ref{fig:hyper pacman} additionally provides  histograms of the knots $\widehat\KK_\ell$ from the last refinement step.
Moreover, all knots with higher multiplicity than one are marked with crosses. 
Note that the exact solution $u\circ\gamma$ on the parameter domain (depicted in Figure~\ref{fig:solutions}) is only $C^0$ at $1/3$ and $2/3$.
We see that $\vartheta=0$ leads to a great amount of unnecessary multiplicity increases. 
In contrast to this, $\vartheta\in\{0.1,1\}$ can reduce them immensely.
In particular, the latter choices give a much more accurate information on the regularity of the solution.


\begin{figure}
\label{fig:hyper pacman}
\psfrag{number of knots N}[c][c]{\small number of knots $N$}
\psfrag{number of knots}[c][c]{\small number of knots}
\psfrag{parameter domain}[c][c]{\small parameter domain}
\psfrag{error estimator}[c][c]{\small error estimator}
\psfrag{O(47)}[c][c]{\tiny $\mathcal{O}(N^{-4/7})$}
\psfrag{O(52)}[c][c]{\tiny $\mathcal{O}(N^{-5/2})$}
\psfrag{th=0.5, vth=0}[c][c]{\small  $\theta=0.5, \vartheta=0$}
\psfrag{th=0.5, vth=0.1}[c][c]{\small  $\theta=0.5, \vartheta=0.1$}
\psfrag{th=0.5, vth=1}[c][c]{\small  $\theta=0.5, \vartheta=1$}
\psfrag{ th=1, vth=0}[c][c]{\tiny $\theta=1, \vartheta=0$}
\psfrag{ th=0.5, vth=0}[c][c]{\tiny  $\theta=0.5, \vartheta=0$}
\psfrag{ th=0.5, vth=0.1}[c][c]{\tiny $\theta=0.5, \vartheta=0.1$}
\psfrag{ th=0.5, vth=1}[c][c]{\tiny $\theta=0.5, \vartheta=1$}
	\centering
	\includegraphics[width=.475\textwidth,clip=true]{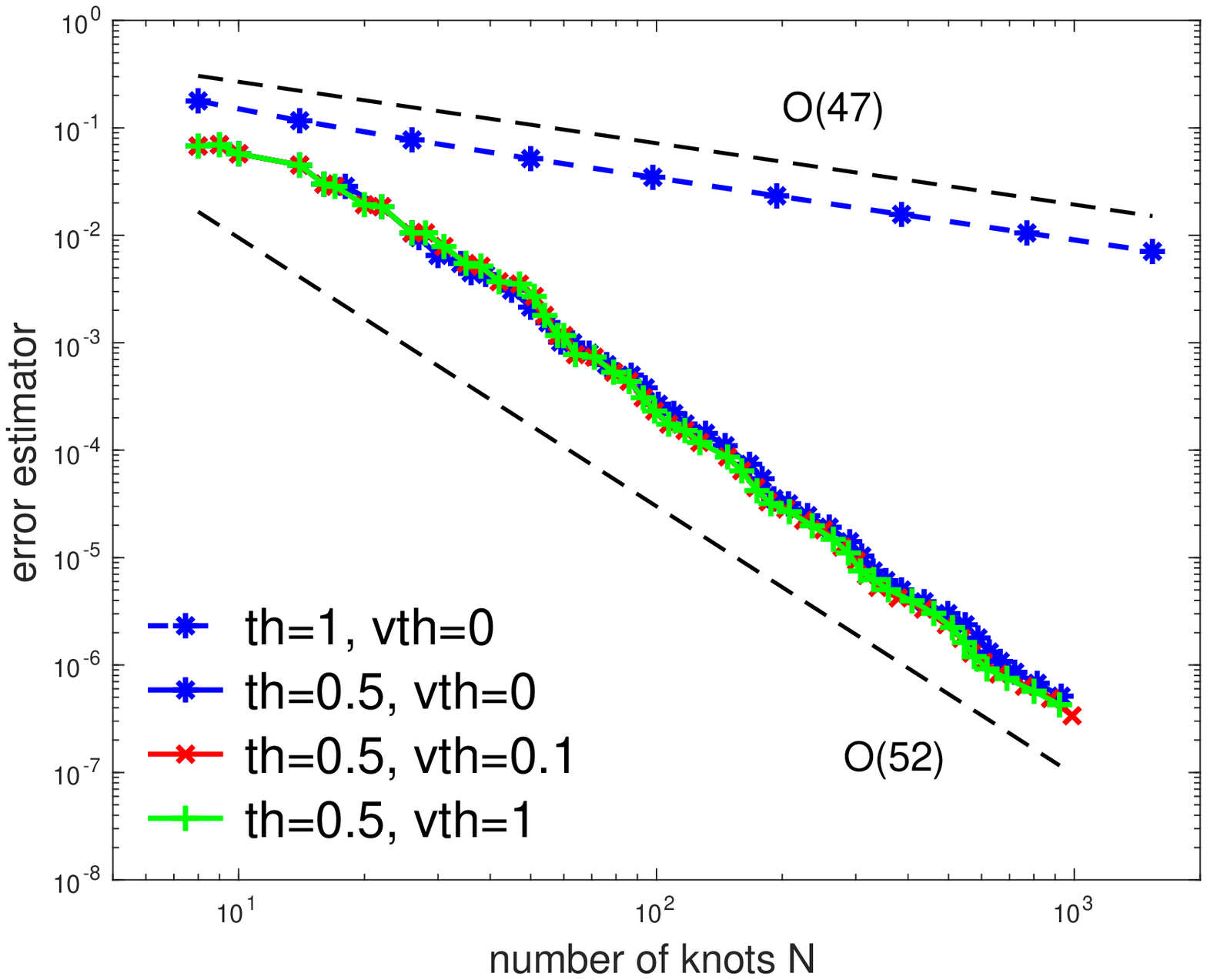}\quad
	\includegraphics[width=.475\textwidth,clip=true]{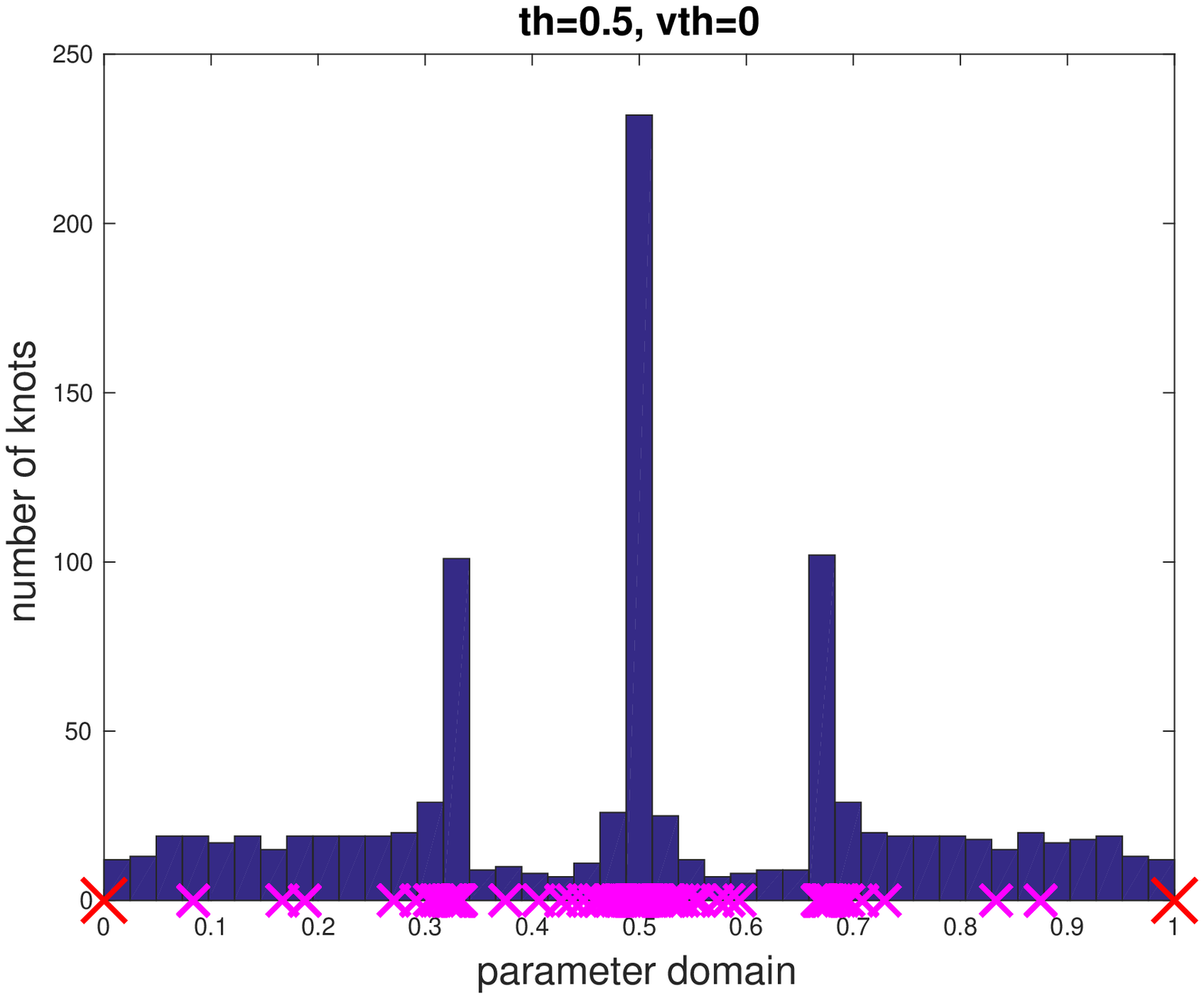}%
	\\
	\hspace{2mm}
	
	\includegraphics[width=.475\textwidth,clip=true]{figures/hist_hyper_1geo_pacmanp_2vartheta_0.1}\quad
	\includegraphics[width=.475\textwidth,clip=true]{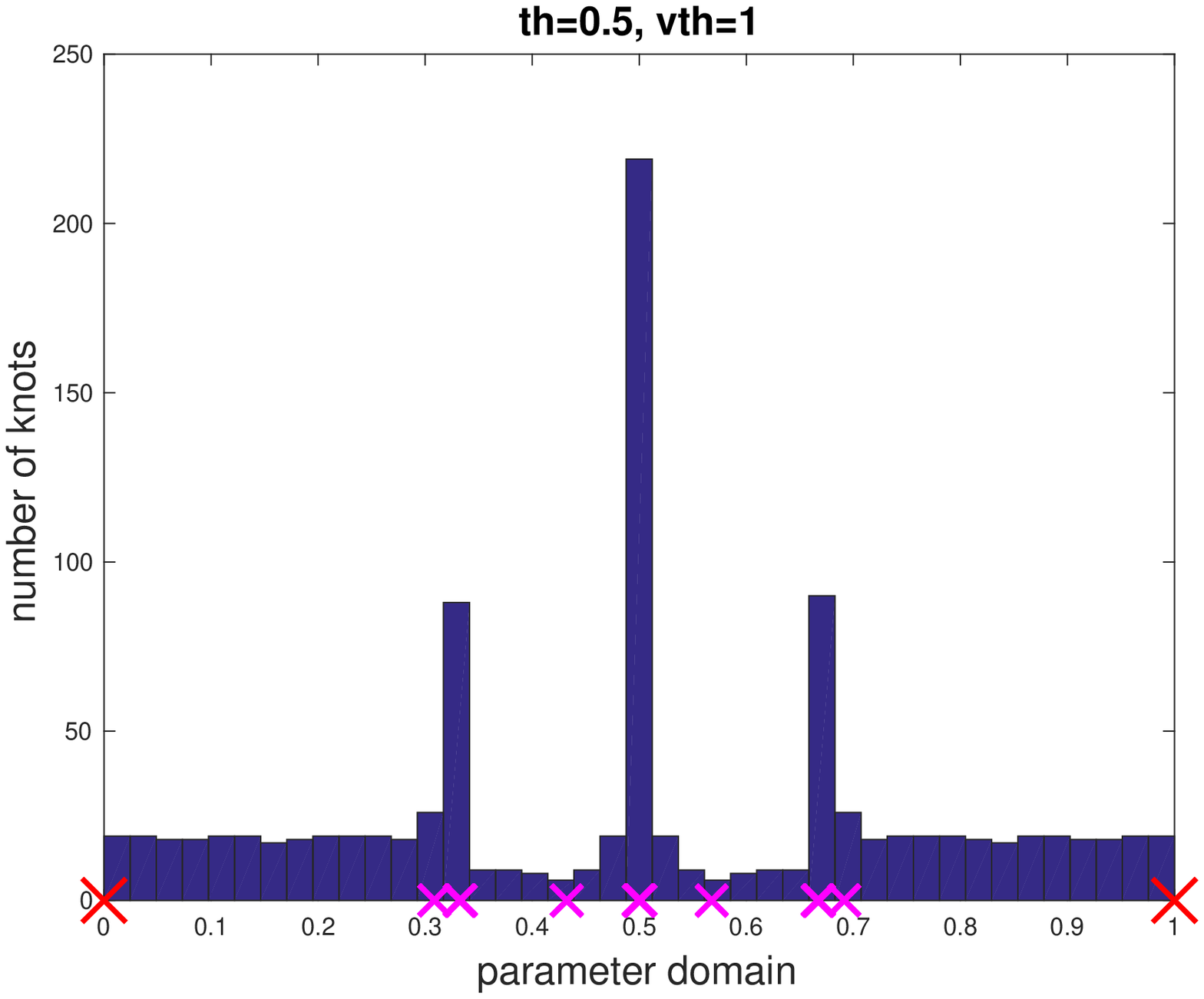}%
	\caption{Hyper-singular integral equation on pacman (Section~\ref{sec:hyper pacman}). Convergence plot of the error estimators $\eta_\ell$ and histograms of the knots $\widehat\KK_\ell$ of the last refinement step. 
	Knots with multiplicity 3 are highlighted by a red cross, knots with multiplicity 2 by a smaller magenta cross.}
\end{figure}

\subsection{Weakly-singular integral equation on pacman}
\label{sec:weak pacman}
In Figure \ref{fig:weak pacman}, the corresponding error estimators $\eta_\ell$ are plotted.
Since the solution lacks regularity, uniform refinement leads to the suboptimal rate $\mathcal{O}(N^{-4/7})$, whereas adaptive refinement leads to the optimal rate $\mathcal{O}(N^{-3/2-p})=\mathcal{O}(N^{-7/2})$.
For $\vartheta=1$, the corresponding multiplicative constant is clearly larger than for $\vartheta\in\{0,0.1\}$. 
A possible explanation is that $\vartheta=1$ results in too few multiplicity increases. 
Indeed, the histograms in  Figure~\ref{fig:hyper pacman} of the knots $\widehat\KK_\ell$ from the last refinement step indicate that $\vartheta\in\{0,0.1\}$ leads to full multiplicity of the knots $1/3$ and $2/3$, which is exactly where the solution $\phi\circ\gamma$ on the parameter domain (depicted in Figure~\ref{fig:solutions}) has jumps. 
In contrast, the choice $\vartheta=1$ compensates the lacking regularity at these points by $h$-refinement; see also \cite[Section~3]{resigabem}. 
Again, $\vartheta\in\{0.1,1\}$ give a more accurate information on the regularity of the solution.

\begin{figure}
\label{fig:weak pacman}
\psfrag{number of knots N}[c][c]{\small number of knots $N$}
\psfrag{number of knots}[c][c]{\small number of knots}
\psfrag{parameter domain}[c][c]{\small parameter domain}
\psfrag{error estimator}[c][c]{\small error estimator}
\psfrag{O(47)}[c][c]{\tiny $\mathcal{O}(N^{-4/7})$}
\psfrag{O(72)}[c][c]{\tiny $\mathcal{O}(N^{-7/2})$}
\psfrag{th=0.5, vth=0}[c][c]{\small  $\theta=0.5, \vartheta=0$}
\psfrag{th=0.5, vth=0.1}[c][c]{\small  $\theta=0.5, \vartheta=0.1$}
\psfrag{th=0.5, vth=1}[c][c]{\small  $\theta=0.5, \vartheta=1$}
\psfrag{ th=1, vth=0}[c][c]{\tiny $\theta=1, \vartheta=0$}
\psfrag{ th=0.5, vth=0}[c][c]{\tiny  $\theta=0.5, \vartheta=0$}
\psfrag{ th=0.5, vth=0.1}[c][c]{\tiny $\theta=0.5, \vartheta=0.1$}
\psfrag{ th=0.5, vth=1}[c][c]{\tiny $\theta=0.5, \vartheta=1$}
	\centering
	\includegraphics[width=.475\textwidth,clip=true]{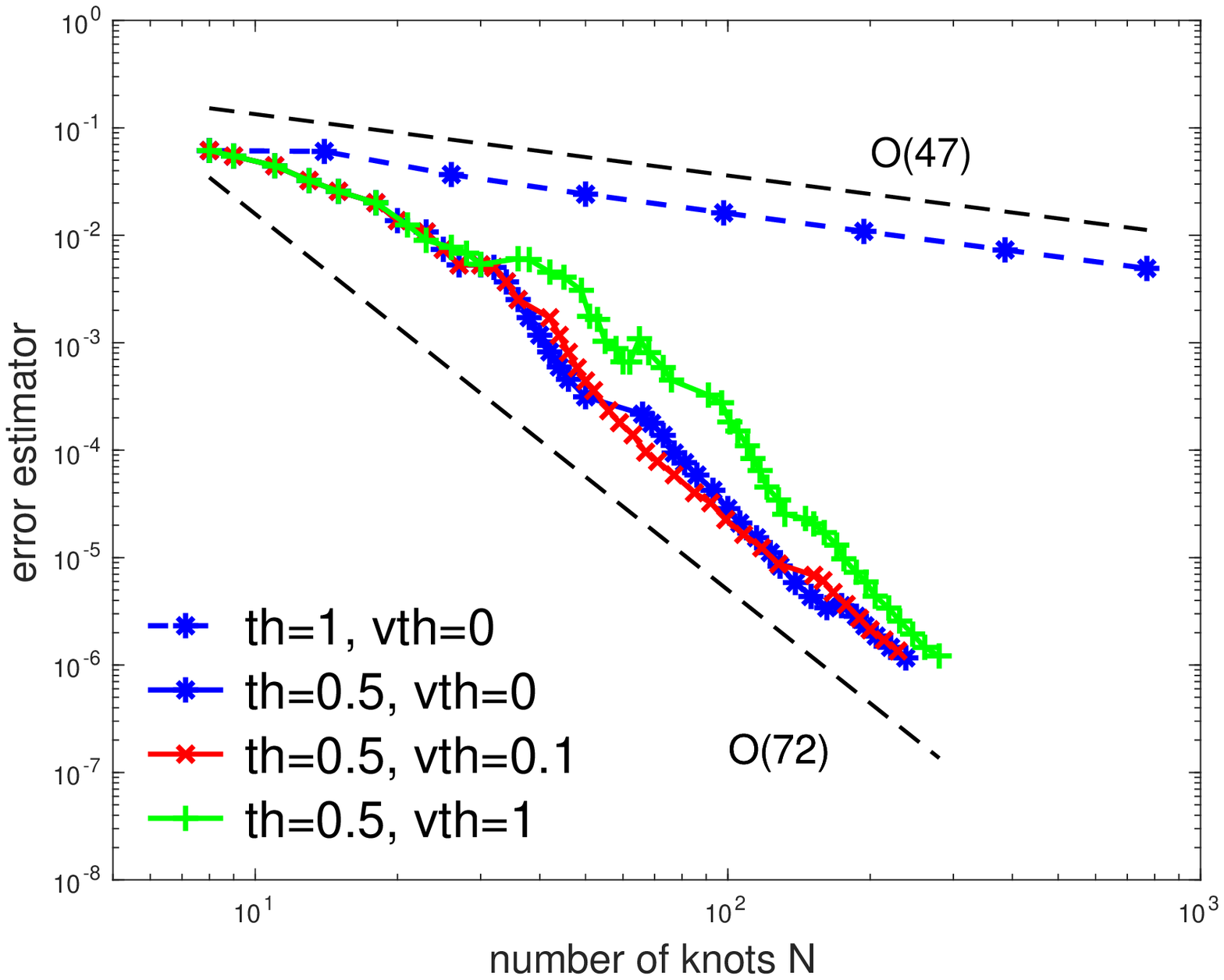}\quad
	\includegraphics[width=.475\textwidth,clip=true]{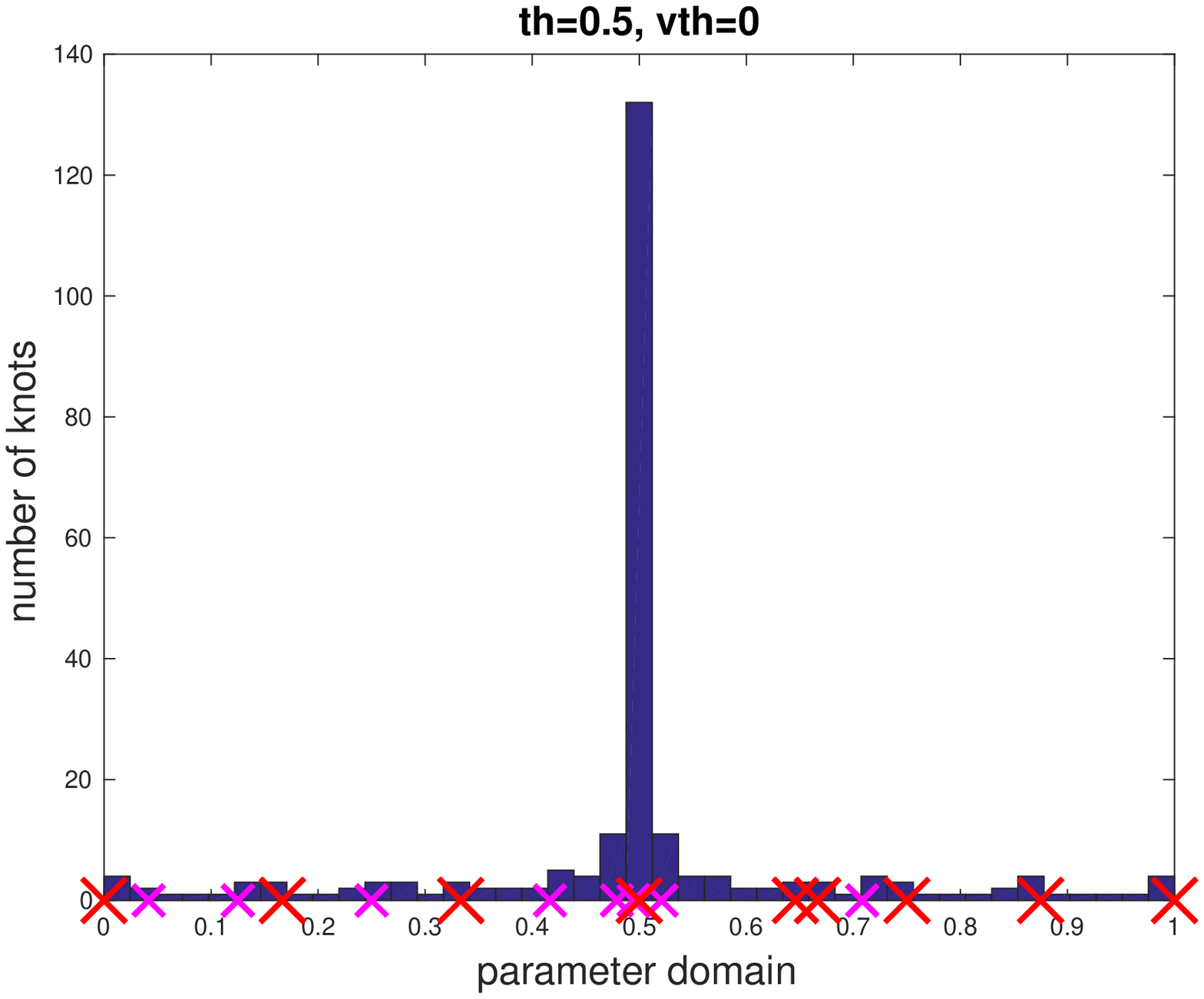}%
	\\
	\hspace{2mm}
	
	\includegraphics[width=.475\textwidth,clip=true]{figures/hist_hyper_0geo_pacmanp_2vartheta_0.1}\quad
	\includegraphics[width=.475\textwidth,clip=true]{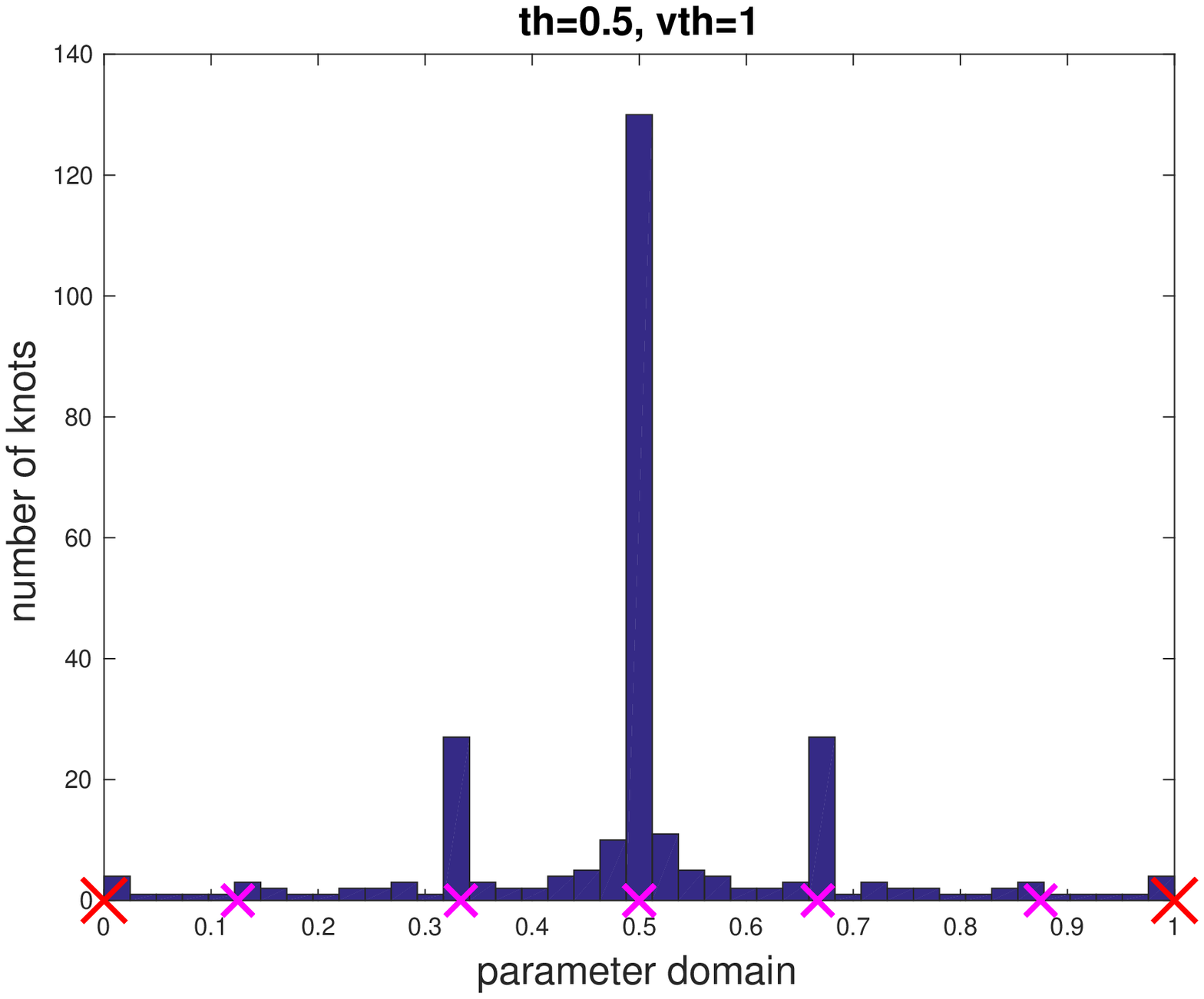}%
	\caption{Weakly-singular integral equation on pacman (Section~\ref{sec:weak pacman}). Convergence plot of the error estimators $\eta_\ell$ and histograms of the knots $\widehat\KK_\ell$ of the last refinement step. 
	Knots with multiplicity 3 are highlighted by a red cross, knots with multiplicity 2 by a smaller magenta cross.}
\end{figure}

\subsection{Hyper-singular integral equation on heart}
\label{sec:hyper heart}
In Figure~\ref{fig:hyper heart}, the corresponding error estimators $\eta_\ell$ are plotted.
Since the Neumann data, which have to be resolved, lack regularity, uniform refinement leads to the suboptimal rate $\mathcal{O}(N^{-2/3})$, whereas adaptive refinement leads to the optimal rate $\mathcal{O}(N^{-1/2-p})=\mathcal{O}(N^{-5/2})$.
While the estimator curves look very similar, the choices $\vartheta\in\{0.1,1\}$ (allowing for knot multiplicity decrease) additionally give accurate information on the regularity of the solution; see the histograms in Figure~\ref{fig:hyper heart}. 
Note that the (periodic extension of the) exact solution $u\circ\gamma$ on the parameter domain (depicted in Figure~\ref{fig:solutions}) is only $C^0$ at $0$ resp. $1$,  $1/6$, and $5/6$.

\begin{figure}
\label{fig:hyper heart}
\psfrag{number of knots N}[c][c]{\small number of knots $N$}
\psfrag{number of knots}[c][c]{\small number of knots}
\psfrag{parameter domain}[c][c]{\small parameter domain}
\psfrag{error estimator}[c][c]{\small error estimator}
\psfrag{O(23)}[c][c]{\tiny $\mathcal{O}(N^{-2/3})$}
\psfrag{O(52)}[c][c]{\tiny $\mathcal{O}(N^{-5/2})$}
\psfrag{th=0.5, vth=0}[c][c]{\small  $\theta=0.5, \vartheta=0$}
\psfrag{th=0.5, vth=0.1}[c][c]{\small  $\theta=0.5, \vartheta=0.1$}
\psfrag{th=0.5, vth=1}[c][c]{\small  $\theta=0.5, \vartheta=1$}
\psfrag{ th=1, vth=0}[c][c]{\tiny $\theta=1, \vartheta=0$}
\psfrag{ th=0.5, vth=0}[c][c]{\tiny  $\theta=0.5, \vartheta=0$}
\psfrag{ th=0.5, vth=0.1}[c][c]{\tiny $\theta=0.5, \vartheta=0.1$}
\psfrag{ th=0.5, vth=1}[c][c]{\tiny $\theta=0.5, \vartheta=1$}
	\centering
	\includegraphics[width=.475\textwidth,clip=true]{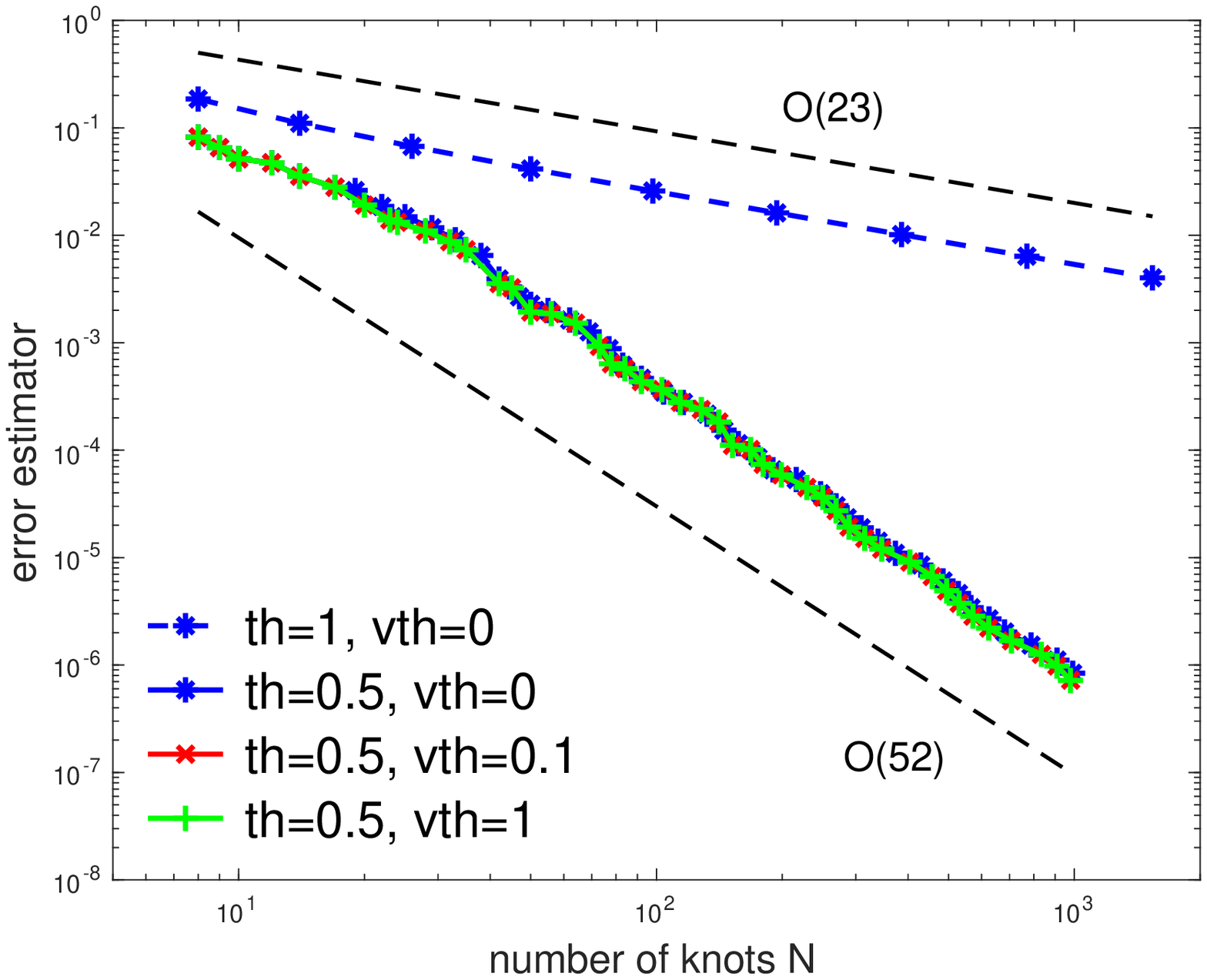}\quad
	\includegraphics[width=.475\textwidth,clip=true]{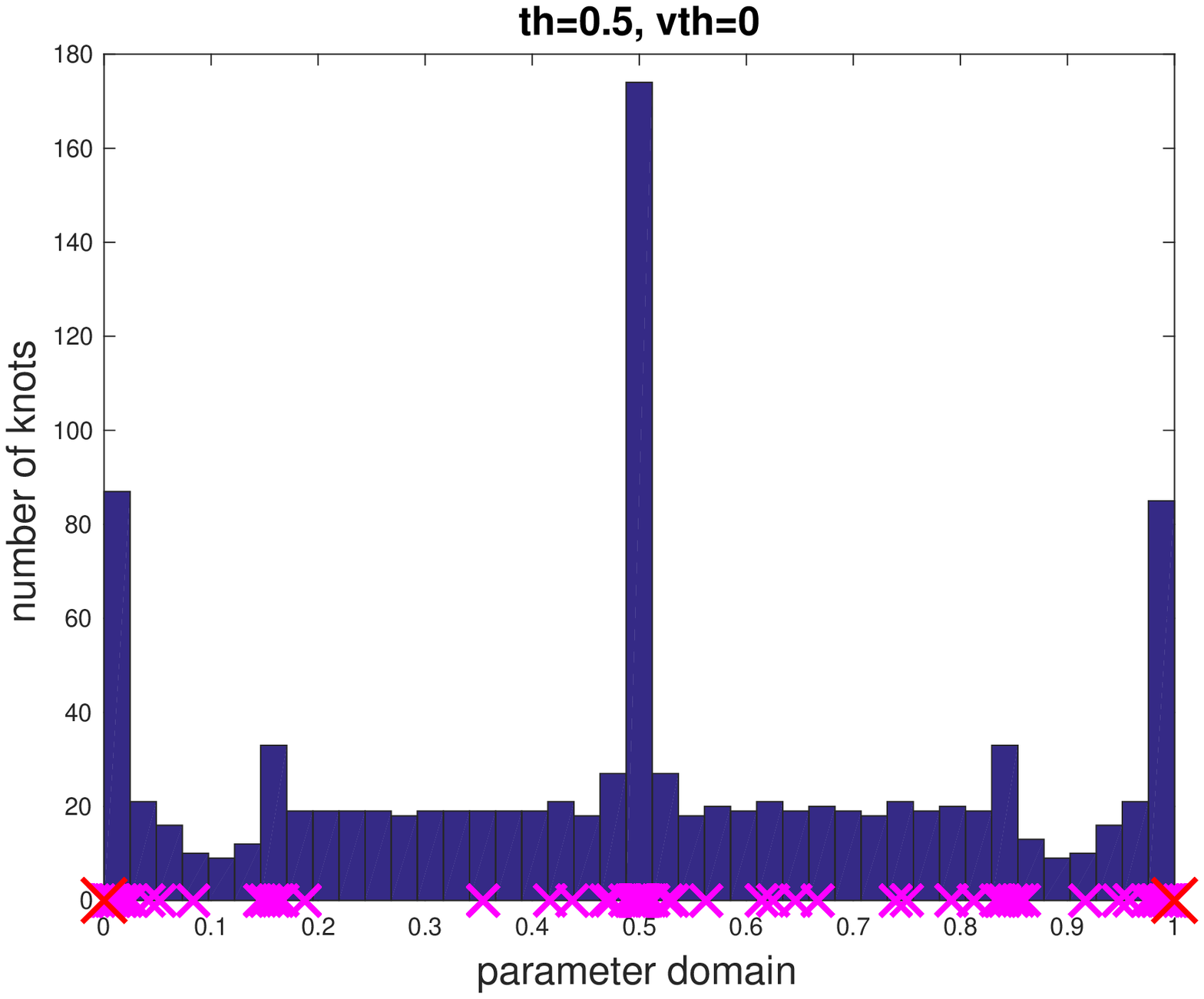}%
	\\
	\hspace{2mm}
	
	\includegraphics[width=.475\textwidth,clip=true]{figures/hist_hyper_1geo_heartp_2vartheta_0.1}\quad
	\includegraphics[width=.475\textwidth,clip=true]{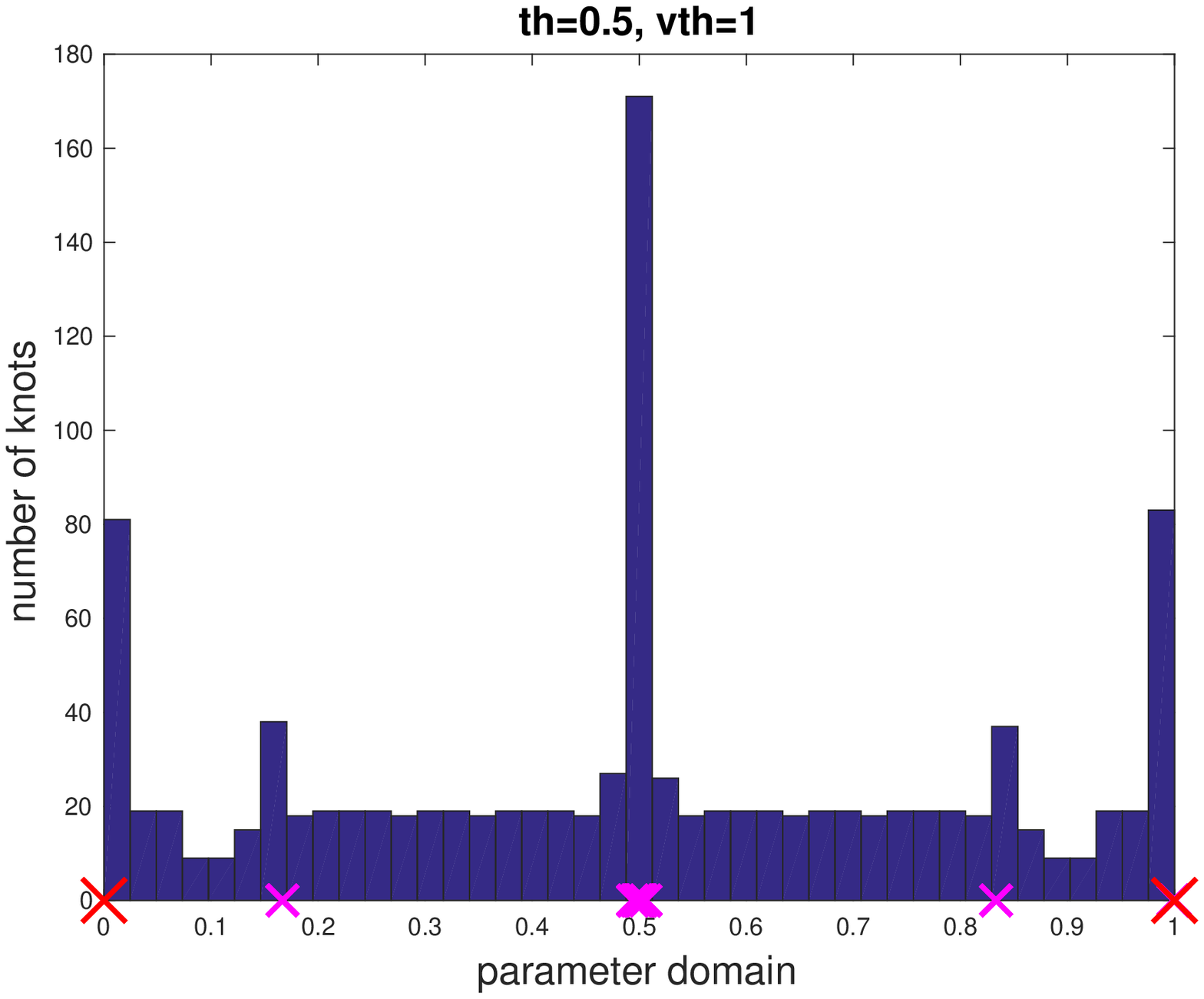}%
	\caption{Hyper-singular integral equation on heart (Section~\ref{sec:hyper heart}). Convergence plot of the error estimators $\eta_\ell$ and histograms of the knots $\widehat\KK_\ell$ of the last refinement step. 
	Knots with multiplicity 3 are highlighted by a red cross, knots with multiplicity 2 by a smaller magenta cross.}
\end{figure}

\subsection{Weakly-singular integral equation on heart}
\label{sec:weak heart}
In Figure \ref{fig:weak pacman}, the corresponding error estimators $\eta_\ell$ are plotted.
Since the solution lacks regularity, uniform refinement leads to the suboptimal rate $\mathcal{O}(N^{-2/3})$, whereas adaptive refinement leads to the optimal rate $\mathcal{O}(N^{-3/2-p})=\mathcal{O}(N^{-7/2})$.
Figure~\ref{fig:hyper pacman} further provides histograms of the knots $\widehat\KK_\ell$ from the last refinement step.
Overall, we observe a similar behavior as in Section~\ref{sec:weak pacman}.
Note that the (periodic extension of the) exact solution $\phi\circ\gamma$ on the parameter domain (depicted in Figure~\ref{fig:solutions}) is only $C^0$ at $0$ resp. $1$,  $1/6$, and $5/6$.

\begin{figure}
\label{fig:weak heart}
\psfrag{number of knots N}[c][c]{\small number of knots $N$}
\psfrag{number of knots}[c][c]{\small number of knots}
\psfrag{parameter domain}[c][c]{\small parameter domain}
\psfrag{error estimator}[c][c]{\small error estimator}
\psfrag{O(23)}[c][c]{\tiny $\mathcal{O}(N^{-2/3})$}
\psfrag{O(72)}[c][c]{\tiny $\mathcal{O}(N^{-7/2})$}
\psfrag{th=0.5, vth=0}[c][c]{\small  $\theta=0.5, \vartheta=0$}
\psfrag{th=0.5, vth=0.1}[c][c]{\small  $\theta=0.5, \vartheta=0.1$}
\psfrag{th=0.5, vth=1}[c][c]{\small  $\theta=0.5, \vartheta=1$}
\psfrag{ th=1, vth=0}[c][c]{\tiny $\theta=1, \vartheta=0$}
\psfrag{ th=0.5, vth=0}[c][c]{\tiny  $\theta=0.5, \vartheta=0$}
\psfrag{ th=0.5, vth=0.1}[c][c]{\tiny $\theta=0.5, \vartheta=0.1$}
\psfrag{ th=0.5, vth=1}[c][c]{\tiny $\theta=0.5, \vartheta=1$}
	\centering
	\includegraphics[width=.475\textwidth,clip=true]{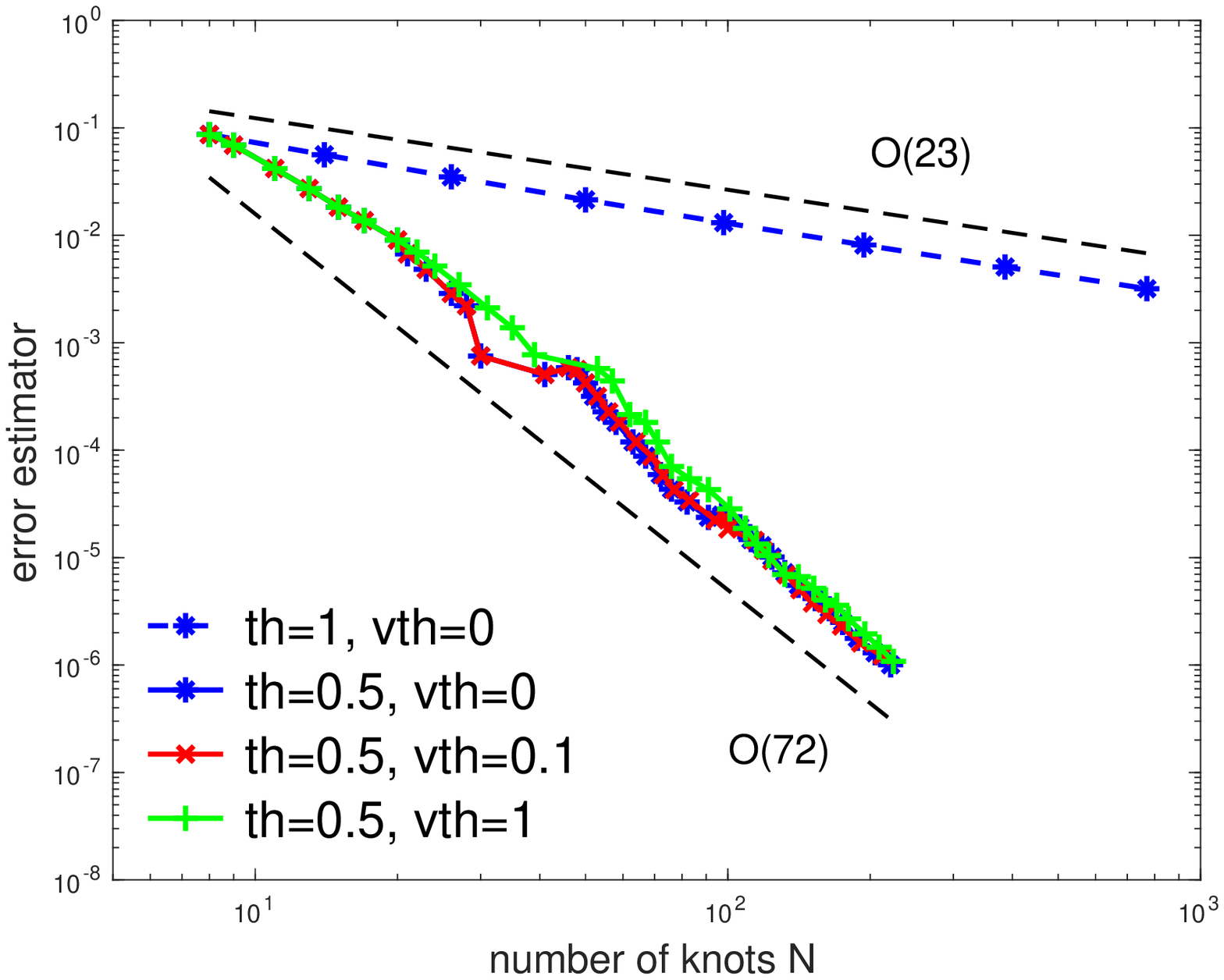}\quad
	\includegraphics[width=.475\textwidth,clip=true]{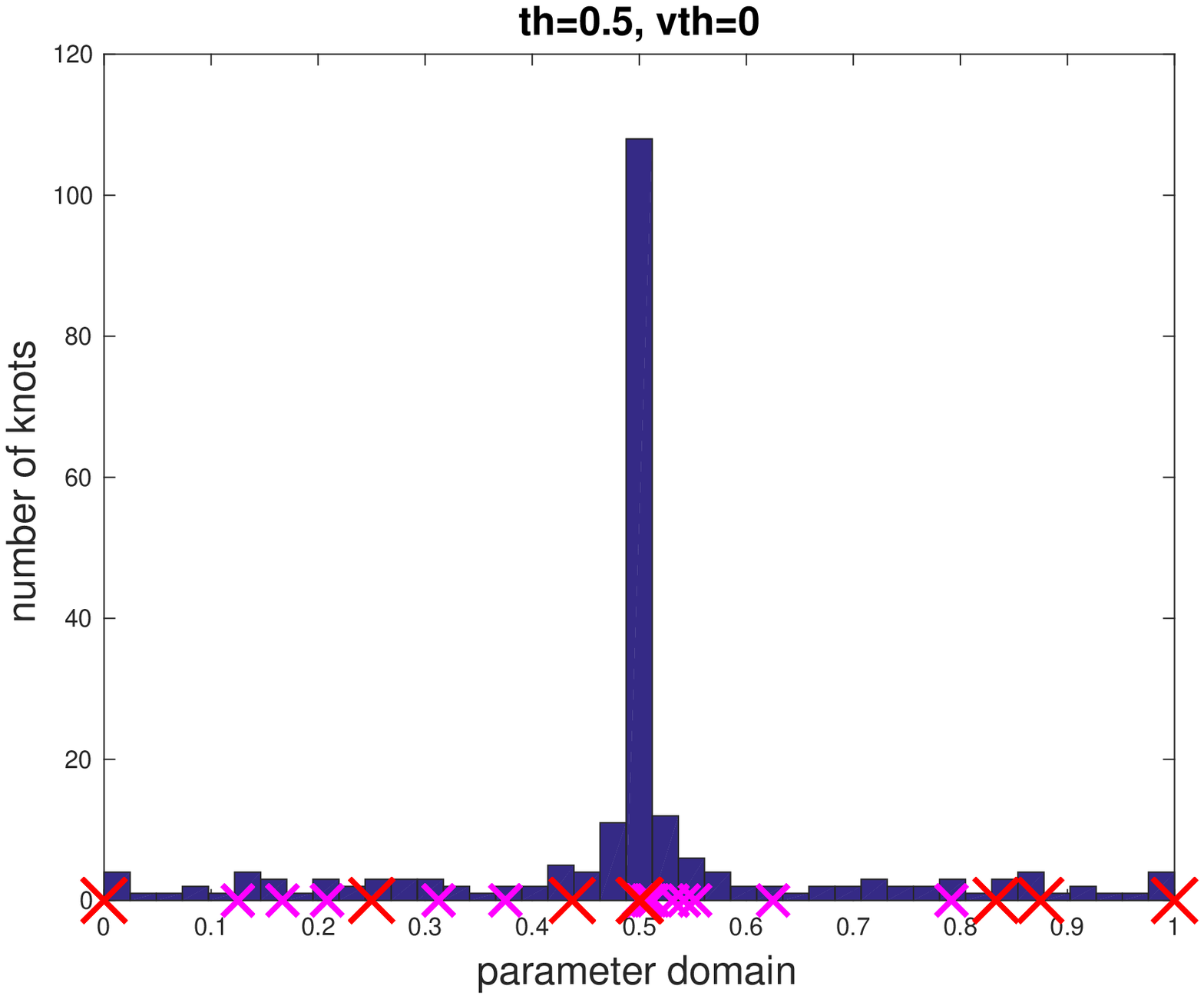}%
	\\
	\hspace{2mm}
	
	\includegraphics[width=.475\textwidth,clip=true]{figures/hist_hyper_0geo_heartp_2vartheta_0.1}\quad
	\includegraphics[width=.475\textwidth,clip=true]{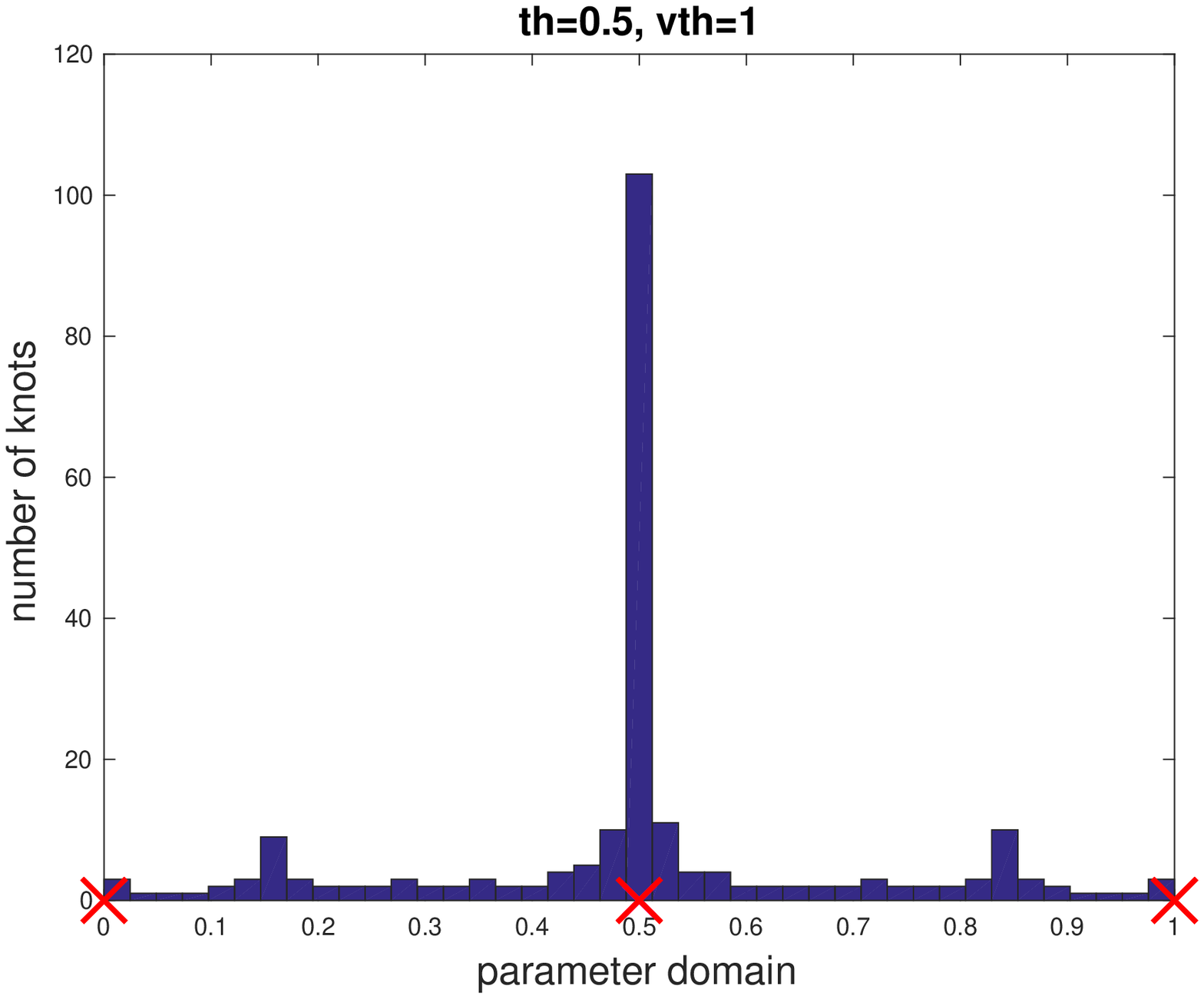}%
	\caption{Weakly-singular integral equation on heart (Section~\ref{sec:weak heart}). Convergence plot of the error estimators $\eta_\ell$ and histograms of the knots $\widehat\KK_\ell$ of the last refinement step. 
	Knots with multiplicity 3 are highlighted by a red cross, knots with multiplicity 2 by a smaller magenta cross.}
\end{figure}


\appendix
\section{Weakly-singular integral equation}
\label{sec:weaksing}%

In this appendix, we consider the weakly-singular integral equation
\begin{align}\label{eq:weak strong}
\mathfrak{V}\phi=(1/2+\mathfrak{K})u\quad\text{with given } u\in H^{1/2}(\Gamma),
\end{align}
which is equivalent to the Laplace--Dirichlet problem
\begin{align}
 -\Delta P = 0 \text{ in } \Omega
 \quad \text{subject to Dirichlet boundary conditions} \quad
 P|_\Gamma= u \text{ on } \Gamma=\partial\Omega,
\end{align}
where $\phi =  \partial P/\partial \nu$ is the normal derivative of the sought potential $P$.

\subsection{Functional analytic setting}
The weakly-singular integral equation~\eqref{eq:weak strong} employs the  single-layer operator $\mathfrak{V}$ as well as the double-layer operator $\mathfrak{K}'$.
These have  the  boundary integral representations
\begin{align}
\mathfrak{V}\phi(x)=\int_\Gamma  \phi(y) G(x,y)\,dy \quad \text{and}\quad
\mathfrak{K}u(x)=\int_\Gamma u(y)\frac{\partial_y}{\partial\nu(y)}G(x,y)\,dy 
\end{align}
for smooth densities $\phi,u:\Gamma\to\R$.

For $0\le \sigma\le1$, the single-layer operator 
$\VV:H^{\sigma-1}(\Gamma)\to H^{\sigma}(\Gamma)$ and the double-layer operator $\mathfrak{K}:H^{\sigma}(\Gamma)\to H^{\sigma}(\Gamma)$ are well-defined, linear, and continuous. 

For $\sigma=1/2$,  
$\VV:H^{-1/2}(\Gamma)\to H^{1/2}(\Gamma)$ is symmetric and 
elliptic under the assumption that $\diam(\Omega)<1$, which can always be achieved by scaling $\Omega$.
In particular,
\begin{align}
\edualV{\phi}{\psi}:=\dual{\VV \phi}{\psi}_\Gamma
\end{align}
 defines an equivalent scalar product on 
$H^{-1/2}(\Gamma)$ with corresponding norm $\enormV{\cdot}$.
With this notation, the strong form~\eqref{eq:weak strong} with data $u\in H^{1/2}(\Gamma)$ is equivalently stated by
\begin{align}
\label{eq:weakform2}
 \edualV{\phi}{\psi} = \dual{(1/2+\mathfrak{K})u}{\psi}_\Gamma
 \quad\text{for all }\psi\in H^{-1/2}(\Gamma).
\end{align}
Therefore, the Lax-Milgram lemma applies and proves that \eqref{eq:weakform2} resp.\  \eqref{eq:weak strong} admits a unique solution $\phi\in H^{-1/2}(\Gamma)$.
Details are found, e.g.,  in  \cite{hw,mclean,ss,s}.

\subsection{IGABEM discretization}
As refinement algorithm for the boundary meshes, we use Algorithm~\ref{alg:refinement}, where the multiplicity of the knots in step (iii) is now increased up to $p+1$ (instead of $p$), allowing for discontinuities at the nodes.
The set $\K$ now denotes the set of all possible knot vectors that can be generated with this modified refinement algorithm starting from an initial knot vector $\KK_0$ as in Section~\ref{section:boundary:discrete}, where each knot in $\KK_0$ might have multiplicity up to $p+1$. 
Further, we do no longer require the restriction $w_{0,1-p}=w_{0,N_0-p}$ for the weights.
For $\KK_\coarse\in\K$, we define the corresponding ansatz space as 
\begin{align}\label{eq:weak space}
\XX_\coarse:={\mathcal{S}}^p(\KK_\coarse,\mathcal{W}_\coarse)={\rm span}\set{R_{\coarse,i,p}}{i=1-p,\dots,N_\coarse-p}
\end{align}
and the Galerkin approximation $\Phi_\coarse\in\XX_\coarse$ of $\phi$ via 
\begin{align}
 \edualV{\Phi_\coarse}{\Psi_\coarse} = \dual{g_\coarse}{\Psi_\coarse}_\Gamma 
\,\text{ for all }\Psi_\coarse\in \XX_\coarse,\quad\text{where }g_\coarse:=(1/2+\mathfrak{K})u_\coarse. 
\end{align}
Here, we  define $u_\coarse:=P_\coarse\phi$, where $P_\coarse$ is either the  identity or the Scott--Zhang operator of Section~\ref{sec:scott zhang} onto the space of (transformed) continuous piecewise polynomials $\PP^p(\QQ_\coarse)\cap C^0(\Gamma)$.

In order to employ the weighted-residual error estimator (plus oscillations)
\begin{align}
\eta_\coarse(z)^2:=\norm{h_\coarse^{1/2}\partial_\Gamma(g_\coarse-\mathfrak{V}\Phi_\coarse)}{L^2(\pi_\coarse(z))}^2+\norm{ h_\coarse^{1/2}\partial_\Gamma(u-u_\coarse)}{L^2(\pi_\coarse(z))}^2\quad\text{for all }z\in\NN_\coarse, 
\end{align}
 we require the additional regularity $u\in H^1(\Gamma)$. 
Moreover, we define 
\begin{align}
\mu_\coarse(z):=\norm{ h_\coarse^{1/2}(1-I_{\coarse\ominus 1})\Phi_\coarse}{L^2(\pi_\coarse^{2p+1}(z))}\text{ for all }z\in\NN_\coarse,
\end{align}
where $I_\coarse$ is  now the Scott--Zhang operator onto $\XX_\coarse$ defined in \cite[Section 3.1.2]{overview}. 

With these definitions, Algorithm~\ref{the algorithm} is also well-defined for the weakly-singular case.
As already mentioned in Remark~\ref{rem:feature} (b), the choice $\vartheta=0$ and $\MM_\ell^-=\emptyset$ leads to no multiplicity decreases and then the adaptive algorithm coincides with the one from \cite{resigabem} if $u_\coarse:=u$. 
For the latter, linear convergence at optimal rate has already been proved in our earlier work \cite{optigabem}.
Theorem~\ref{thm:main} holds accordingly in the weakly-singular case and thus generalizes \cite{optigabem}.    
We will briefly sketch the proof in the remainder of this appendix.

\subsection{Reliability and efficiency}
\label{subsection:reliable weak}
Reliability, i.e., $\norm{\phi-\Phi_\coarse}{H^{-1/2}(\Gamma)}\lesssim\eta_\coarse$ is already stated in \cite[Theorem~4.4]{resigabem}. 
Efficiency, i.e., $\eta_\coarse\lesssim \norm{h_\coarse^{1/2}(\phi-\Phi_\coarse)}{L^2(\Gamma)}+
\norm{h_\coarse^{-1/2}(g-g_\coarse)}{L^2(\Gamma)}$ follows as in \cite[Corollary~3.3]{invest}, which proves the assertion for standard BEM.

\subsection{Linear and optimal convergence}
Linear convergence \eqref{eq:R-linear} at optimal rate~\eqref{eq:optimal} (with similarly defined approximability constant $\norm{\phi}{\A_s}$) follows again from the axioms of adaptivity.
Stability (E1) follows exactly as the corresponding version \cite[Lemma~5.1]{optigabem}. 
The main argument is the inverse estimate $\norm{h_\coarse^{1/2}\partial_\Gamma (\VV\Psi_\coarse)}{L^2(\Gamma)}\lesssim\norm{\Psi_\coarse}{H^{-1/2}(\Gamma)}$ of \cite[Proposition~4.1]{optigabem} for (transformed) rational splines $\Psi_\coarse\in\XX_\coarse$.
Reduction (E2) follows as in \cite[Lemma~4.4]{optigabem}.
It is proved via the same inverse estimate  together with the contraction property \eqref{eq:h tilde ctr} of $\widetilde h_{\pi_\coarse(z)}$.
Details are also found in \cite[Section~5.8.4 resp.\ Section~5.8.5]{gantner17}.
We have already proved discrete reliability in \cite[Lemma 5.2]{optigabem}; see also \cite[Section~5.8.7]{gantner17} for details.
The proof of quasi-orthogonality (E4) is almost identical as for the hyper-singular case in Section~\ref{subsec:hypsing orthogonality}. 
The son estimate is verified as in Section~\ref{sec:hypsing son} and the closure estimate (R2) as well as the overlay property (R3) are already found in \cite[Proposition~2.2]{optigabem}. 

\subsection{Approximability constants}
Similarly as in \eqref{eq:classes}, we have that $\norm{\phi}{\A_s^1}\lesssim\norm{\phi}{\A_s}\le 
\min\{\norm{\phi}{\A_s^1},\norm{\phi}{\A_s^{p+1}}\}\lesssim\norm{\phi}{\A_s^1}$, where the approximability constants are defined analogously as in Section~\ref{sec:main}.
The proof follows along the lines of Section~\ref{sec:classes}.

\section{Indirect BEM}
\subsection{Hyper-singular case}
Theorem~\ref{thm:main} remains valid if, instead of \eqref{eq:hyper strong}, one considers the indirect integral formulation
\begin{align}
\mathfrak{W} u=\phi\quad\text{with given }\phi\in H_0^{-1/2}(\Gamma),
\end{align}
where the function $\phi$ is again approximated via $\phi_\coarse$ as in Section~\ref{section:igabem}. 
Indeed, the proof becomes even simpler due to the absence of the operator $\mathfrak{K}'$. 

\subsection{Weakly-singular case}
Similarly, one can consider 
\begin{align}
\mathfrak{V}\phi=u \quad\text{with given } u\in H^{1/2}(\Gamma)
\end{align}
instead of \eqref{eq:weak strong}, where $u$ is approximated as in Appendix~\ref{sec:weaksing}, and the results of Appendix~\ref{sec:weaksing} remain valid.
Again, the proof even simplifies due to the absence of the operator $\mathfrak{K}$.

\section{Slit problems}
In contrast to before, let now $\Gamma\subsetneqq \partial\Omega$ be a connected proper subset of the boundary $\partial\Omega$ with parametrization $\gamma:[a,b]\to\Gamma$. 
Let $E_0(\cdot)$ denote the extension of a function on $\Gamma$ by zero onto the whole boundary.
\subsection{Hyper-singular case}
We consider the slit problem
\begin{align}
(\mathfrak{W}E_0 u)|_\Gamma= \phi\quad\text{with given } \phi\in H^{-1/2}(\Gamma).
\end{align}
Here, $H^{-1/2}(\Gamma)$ denotes the dual space of $\H^{1/2}(\Gamma)=[L^2(\Gamma),\H^1(\Gamma)]_{1/2}$, where $\H^1(\Gamma)$ is the set of all $H^1(\Gamma)$ functions with vanishing trace on the relative boundary $\partial\Gamma$.
The operator $(\mathfrak{W}E_0(\cdot))|_\Gamma:\H^{1/2}(\Gamma)\to H^{-1/2}(\Gamma)$ is linear, continuous, and elliptic. 
Let $\K$ now denote the set of all knot vectors that can be generated via Algorithm~\ref{alg:refinement} from an initial knot vector $\KK_0$ on $\Gamma$. 
For $\KK_\coarse\in\K$ and corresponding weights $\mathcal{W}_\coarse$ as in Section~\ref{section:igabem} without the restriction $w_{\coarse,1-p}=w_{\coarse,N_\coarse-p}$, we define the ansatz space 
\begin{align}
\XX_\coarse:=\set{V}{V\circ\gamma \in\widehat{\mathcal{S}}(\widehat\KK_\coarse,\mathcal{W}_\coarse)\wedge 0=V(\gamma(a+))=V(\gamma(b-))}\subset \H^1(\Gamma).
\end{align}
Moreover, we define the corresponding Scott--Zhang operator $J_\coarse:L^2(\Gamma)\to\XX_\coarse$ similarly as in Section~\ref{sec:scott zhang} via 
\begin{align}
J_\coarse v:=\sum_{i=2-p}^{N_\coarse-p-1} \Big(\int_a^b  \widehat R_{\coarse,i,p}^*(v\circ\gamma) \,dt \Big)R_{\coarse,i,p}.
\end{align}
Replacing the spaces $H^\sigma(\Gamma)$ and $H^1(\Gamma)$ by $\H^\sigma(\Gamma)$ and $\H^1(\Gamma)$, Proposition~\ref{lem:Scott properties} is also satisfied in this case, where the proof additionally employs the  Friedrichs inequality.
If we approximate $\phi$ as in Section~\ref{section:igabem}, Theorem~\ref{thm:main} holds accordingly. 
The proof follows along the same lines. 

\subsection{Weakly-singular case}
We consider the slit problem
\begin{align}\label{eq:weak slit}
(\mathfrak{V}E_0 \phi)|_\Gamma= u\quad\text{with}\quad u\in H^{1/2}(\Gamma),
\end{align}
where $H^{1/2}(\Gamma)=[L^2(\Gamma),H^1(\Gamma)]_{1/2}$.
The dual space of the latter is denoted by $\H^{-1/2}(\Gamma)$.
The operator $(\mathfrak{V}E_0(\cdot))|_\Gamma:\H^{-1/2}(\Gamma)\to H^{1/2}(\Gamma)$ is linear, continuous, and elliptic provided that $\diam(\Omega)<1$. 
An adaptive IGABEM can be formulated as in Appendix~\ref{sec:weaksing}. 
Without multiplicity decrease and oscillation terms, it coincides with the algorithm from \cite{resigabem}, which converges linearly at optimal rate according to \cite{optigabem}. 
For the generalized version, one can prove the same results as in Appendix~\ref{sec:weaksing}. 



\subsection*{Acknowledgement.}
The authors are supported by the Austrian Science Fund (FWF) through the research projects \textit{Optimal isogeometric boundary element method} (grant P29096) and \textit{Optimal adaptivity for BEM and FEM-BEM coupling} (grant P27005), the doctoral school \textit{Dissipation and dispersion in nonlinear PDEs} (grant W1245), and the special research program \textit{Taming complexity in PDE systems} (grant SFB F65).
\bibliographystyle{alpha}
\bibliography{literature}



\end{document}